\newtheoremstyle{standard}
 {16pt}  %Abstand nach oben
 {16pt}  %Abstand nach oben
 {}  %Schrift Text
 {}  %Einrücken
 {\bfseries}%\normalfont} %Schrift Kopf
 {}  %Zeichen nach Kopf
 { } %Abstand
 {{\thmname{#1~}}{\thmnumber{#2.}}\thmnote{~(#3)}} %Format
\newtheoremstyle{kursiv}
 {16pt}  %Abstand nach oben
 {16pt}  %Abstand nach oben
 {\it}  %Schrift Text
 {}  %Einrücken
 {\bfseries}%\normalfont} %Schrift Kopf
 {}  %Zeichen nach Kopf
 { } %Abstand
 {{\thmname{#1~}}{\thmnumber{#2.}}\thmnote{~(#3)}} %Format
\theoremstyle{standard}
\newtheorem{defn} [subsection]{Definition}
\newtheorem{rem}   [subsection]{Remark}
\newtheorem{setup} [subsection]{}
\theoremstyle{definition}
\theoremstyle{kursiv}
\newtheorem{thm}[subsection]{Theorem}
\newtheorem{prop} [subsection]{Proposition}
\newtheorem{cor} [subsection]{Corollary}
\newtheorem{lem} [subsection]{Lemma}
\newcommand{\im}{\mathrm{im}}
\newcommand{\id}{\mathrm{id}}
\newcommand{\N}{\mathbb{N}}
\newcommand{\R}{\mathbb{R}}
\newcommand{\K}{\mathbb{K}}
\newcommand{\C}{\mathbb{C}}
\newcommand{\Fl}{Fl}
\newcommand{\set}[1]{\{  #1 \}}
\newcommand{\setm}[2]{\left\{\, #1 \middle\vert #2\,\right\}}
\newcommand{\norm}[1]{\lVert #1 \rVert}
\newcommand{\ve}{\varepsilon}
\newcommand{\coloneq}{\colonequals}
\DeclareMathOperator{\pr}{pr}
\DeclareMathOperator{\Lip}{Lip}
\title{Differentiable mappings on products with different degrees of differentiability in the two factors}
\author{Hamza Alzaareer\thanks{\small \scshape Mathematical Institute, University of Paderborn, Warburger Straße 100, 33098 Paderborn, Germany}, Alexander Schmeding\footnotemark[1]\setcounter{footnote}{-1}\thanks{Email Addresses: \href{mailto:zaareer@gmail.com}{zaareer@gmail.com} (H. Alzaareer), \href{mailto:alsch@math.upb.de}{alsch@math.upb.de} (A. Schmeding)} 
}
\begin{document}

% Kopf
\thispagestyle{empty}
\maketitle

\begin{abstract}
We develop differential calculus of $C^{r,s}$-mappings on products of locally convex spaces and prove exponential laws for such mappings. As an application, we consider differential equations in Banach spaces depending on a parameter in a locally convex space. Under suitable assumptions, the associated flows are mappings of class $C^{r,s}$.
\end{abstract}
\noindent
{\bf MSC 2000 Subject Classification:} Primary  26E15; Secondary 26E20, 46E25, 34A12, 22E65, 46T20  \\
{\bf Keywords:} differential calculus; infinite-dimensional manifolds; smooth compact-open topology; exponential law; evaluation map; ordinary differential equation; k-space

\section{Introduction and statement of results}
 This paper gives a systematic treatment of the calculus of mappings on products with different degrees of differentiability in the two factors, called $C^{r,s}$-mappings. We shall develop their basic properties and some refined tools. We study such mappings in an infinite-dimensional setting, which is analogous to the approach to $C^r$-maps between locally convex spaces known as Keller's $C^r_c$-theory~\cite{keller1974} (see \cite{Mic1980}, \cite{Ham1982}, \cite{Mil1984}, \cite{hg2002a} and \cite{gn07} for streamlined expositions, cf.\ also \cite{bgn}). For $C^r$-maps on suitable non-open domains, see \cite{gn07} and \cite{wockel2006}.
Some basic facts will be recalled in Section 2).\\
 We first introduce the notion of a $C^{r,s}$-mapping: Let $E_1$, $E_2$ and $F$ be locally convex spaces, $U\subseteq E_1$ and $V\subseteq E_2$ be open subsets
and $r,s\in \N_0\cup\{\infty\}$. We say that a map $f\colon U\times V\to F$ is $C^{r,s}$ if the iterated directional derivatives
\begin{displaymath}
 (D_{(w_i,0)}\cdots D_{(w_1,0)}D_{(0,v_j)}\cdots D_{(0,v_1)}f)(x,y)
\end{displaymath}
exist for all for all $i,j\in \N_0$ with $i\leq r$ and $j\leq s$, and are continuous functions in $(x,y,w_1,\ldots,w_i,v_1,\ldots, v_j)\in U\times V\times E_1^i\times E_2^j$ (see Definition \ref{crs-map} for details).
To enable choices like $U=[0,1]$, and also with a view towards manifolds with boundary, more generally we consider $C^{r,s}$-maps if $U$ and $V$ are locally convex (in the sense
that each point has a convex neighbourhood) and have dense interior (see Definition \ref{crs1-map}). These properties are satisfied by all open sets. 
Variants and special cases of $C^{r,s}$-mappings are encountered in many parts of ana\-lysis. For example, \cite{amann1990} considers analogues of $C^{0,r}$-maps on Banach spaces based on continuous Fr\'{e}chet differentiability; see \cite[1.4]{hg2002} for $C^{0,r}$-maps; \cite{Glo2002} for $C^{r,s}$-maps on finite-dimensional domains; and \cite[p.\,135]{FaK1988} for certain $\mbox{Lip}^{r,s}$-maps in the convenient setting of ana\-lysis. Cf.\ also \cite{Nag}, \cite{Glox2} for ultrametric analogues in finite dimensions. Furthermore, a key result concerning $C^{r,s}$-maps was conjectured in \cite[p.10]{hg2004}. However the authors' interest concerning the subject was motivated by recent questions in infinite dimensional Lie theory. At the end of this section, we present an overview, showing where refined tools from $C^{r,s}$-calculus are useful.\\
The first aim of this paper is to develop necessary tools like a version of the Theorem of Schwarz and various versions of the Chain Rule. After that we turn to an advanced tool, the exponential law for spaces of mappings on products (Theorem \ref{kspace-iso}). We endow spaces of $C^r$-maps with the usual compact-open $C^r$-topology (as recalled in Definition \ref{defn: CrTop}) and spaces of $C^{r,s}$-maps with the analogous compact-open $C^{r,s}$-topology (see Definitions \ref{Crs-top} and \ref{ch-rs-in}). Recall that a topological space $X$ is called a \emph{k-space}
 if it is Hausdorff and its topology is the final topology with respect to the inclusion maps $K\to X$ of compact subsets of $X$ (cf.\ \cite{kel} and the work \cite{Ste1967}, which popularized the use of k-spaces in algebraic topology). For example, all locally compact spaces and all metrizable topological spaces are k-spaces. The main results of Section 3 (Theorems \ref{vee-rs} and \ref{kspace-iso}) subsume: 
\paragraph{Theorem A.} \emph{Let $E_1$, $E_2$ and $F$ be locally convex spaces, $U\subseteq E_1$ and $V\subseteq E_2$ be locally convex subsets with dense interior, and
$r,s\in \N_0\cup\{\infty\}$. Then $\gamma^\vee\colon U\to C^s(V,F)$, $x\mapsto \gamma(x,\bullet)$ is $C^r$ for each $\gamma\in C^{r,s}(U\times V,F)$, and the map}
\begin{equation}\label{eq:rs-iso}
 \Phi\colon C^{r,s}(U\times V,F)\to C^r(U,C^s(V,F)),\quad \gamma\mapsto \gamma^\vee
\end{equation}
\emph{is linear and a topological embedding. If $U\times V\times E_1\times E_2$ is a k-space or $V$ is locally compact, then $\Phi$ is an isomorphism of topological vector spaces.}\\[4mm]
 This is a generalization of the classical exponential law for smooth maps. Since $C^\infty$-maps and $C^{\infty,\infty}$-maps on products coincide (see Lemma \ref{lem: C11:C1}, Remark \ref{smoo} and Lemma \ref{lem: smo-smo}), we obtain as a special case that 
\begin{equation}\label{expiso}
\Phi\colon C^\infty(U\times V,F)\to C^\infty(U,C^\infty(V,F))
\end{equation}
is an isomorphism of topological vector spaces if $V$ is locally compact or $U\times V\times E_1\times E_2$ is a k-space. For open sets $U$ and $V$, the latter was known if $E_2$ is finite-dimensional or both $E_1$ and $E_2$ are metrizable (see \cite{Bil2002} and \cite{gn07}; cf. \cite[Propositions 12.2\,(b) and 12.6\,(c)]{hg2004}, where also manifolds are considered). In the inequivalent setting of differential calculus developed by E.\,G.\,F. Thomas,\footnote{Thomas replaces continuity of a function or its differentials with continuity on compact sets, and only considers quasi-complete locally convex spaces.} an exponential law for smooth functions on open sets (analogous to (\ref{expiso})) holds without any conditions on the spaces, see \cite[Theorem 5.1]{Tho1996}. Related earlier results can be found in \cite[p.\,90, Lemma 17]{Sei1972}. In the inequivalent ``convenient setting'' of analysis, (\ref{expiso})
always is an isomorphism of \emph{bornological} vector spaces (see \cite{FaK1988} and \cite{conv1997}, also for the case of manifolds) -- but rarely an isomorphism of topological vector spaces \cite{Bil2002} (in this setting other topologies on the function spaces are used). An analogue of Theorem A for finite-dimensional vector spaces over a complete ultrametric field will be
made available in \cite{Glox2}. \\ 
Naturally, one would like to apply the exponential law \eqref{eq:rs-iso} to a pair of smooth manifolds $M_1$ and $M_2$ modelled on locally convex spaces $E_1$ and $E_2$, respectively. In Section 4, we extend our results to $C^{r,s}$-maps on products of manifolds. Beyond ordinary manifolds, we can consider (with increasing generality) manifolds with smooth boundary, manifolds with corners and manifolds with rough boundary (all modelled on locally convex spaces) -- see Definition~\ref{defnNEW}. It turns out that if the modelling space of the manifold is well behaved, the exponential law holds in these cases (Theorem \ref{k-mfd-iso}). The main results of Section 4 subsume: 
 \paragraph{Theorem B.} \emph{Let $M_1$ and $M_2$ be smooth manifolds} (\emph{possibly with rough boundary}) \emph{modelled on locally convex spaces $E_1$ and $E_2$, respectively. Let $F$ be a locally convex space and $r,s\in \N_0\cup\set{\infty}$.\\ Then $\gamma^\vee\in C^r(M_1,C^s(M_2,F))$ for all $\gamma\in C^{r,s}(M_1\times M_2,F)$, and the map}
\begin{equation}\label{eq:mfd-iso}
 \Phi\colon C^{r,s}(M_1\times M_2,F)\to C^r(M_1,C^s(M_2,F)),\quad \gamma\mapsto \gamma^\vee
\end{equation}
\emph{is linear and a topological embedding. If $E_1$ and $E_2$ are metrizable, then $\Phi$ is an isomorphism of topological vector spaces.}\\[4mm] The same conclusion holds if $M_2$ is finite-dimensional or $E_1\times E_2\times E_1\times E_2$ is a k-space, provided that $M_1$ and $M_2$ are manifolds without boundary, manifolds with smooth boundary or manifolds with corners.
Recall that direct products of k-spaces need not be k-spaces. However, the direct product of two metrizable spaces is metrizable (and hence a k-space). Likewise, the product of two hemicompact k-spaces\footnote{A topological space $X$ is called hemicompact if it is the union of an ascending sequence $K_1\subseteq K_2\subseteq\cdots$ of compact sets and each compact subset of $X$ is contained in some $K_n$.} (also known as $k_\omega$-spaces) is a hemicompact k-space and hence a k-space (see \cite{FST1977} for further information and \cite{ghh}, including analogues for spaces which are only locally $k_\omega$). Thus $E_1\times E_2\times E_1\times E_2$ is a k-space whenever both $E_1$ and $E_2$ are $k_\omega$. For example, the dual $E'$ of a metrizable locally convex space $E$ always is $k_\omega$ when equipped with the compact-open topology (cf.\ \cite[Corollary 4.7]{Aus1999}). Consequently, \eqref{eq:mfd-iso} is an isomorphism in the case of manifolds with corners if $M_2$ is finite-dimensional or both $E_1$ 
and $E_2$ are metrizable resp.\ both are hemicompact $k$-spaces (Corollary \ref{mm-iso}).

As an application of the $C^{r,s}$-mappings, in Section 5 we study differential equations depending on parameters in locally convex spaces. Our approach yields the following Picard-Lindelöf type theorem: 
\paragraph{Theorem C.} \emph{Let $(E,\|.\|)$ be a Banach space, $J\subseteq \R$ be a non-degenerate interval, $F$ a locally space and $P\subseteq F$, $U\subseteq E$ be open subsets.
Let $r,s\in \N_0\cup\{\infty\}$ with $s\geq 1$ and $f\colon J\times (U\times P)\to E$ be a $C^{r,r+s}$-map. If $t_0\in J$, $z_0\in U$ and $q_0\in P$, then there exists a convex neighbourhood $J_0\subseteq J$ of $t_0$ and open neighbourhoods $U_0\subseteq U$ of $z_0$ and $P_0\subseteq P$ of $q_0$ such that for all $(\tau_0,x_0,p_0)\in J_0\times U_0\times P_0$,
the initial value problem}
  \begin{equation} \label{ode1}
   \begin{cases}
    x' (t)  &= f(t,x(t),p_0) \\
    x (\tau_0) &= x_0
   \end{cases}
  \end{equation}
\emph{has a unique solution $\phi_{\tau_0,x_0,p_0}\colon J_0\to U$, and the map}
\begin{equation}\label{partialflow}
J_0\times (U_0\times P_0)\to E,\quad (t,x_0,p_0)\mapsto \phi_{\tau_0,x_0,p_0}(t)
\end{equation}
\emph{is $C^{r+1,s}$, for each $\tau_0\in J_0$.}\\[4mm]
In particular, if $f$ is $C^{r,\infty}$, then the map in (\ref{partialflow}) is $C^{r+1,\infty}$. Using standard arguments, the local existence and differentiable dependence implies the same differentiability properties for flows of vector fields (cf. Proposition \ref{prop: flow:diff} and Proposition \ref{prop: flow:mfd}). For $f$ replaced with a $C^r$-map, the dependence of $\phi_{\tau_0,x_0,p_0}(t)$ on $(\tau_0, t, x_0,p_0)$ has already been studied in \cite[Theorem E]{hg2006}. If $F$ is a Banach space as well, then there also are classical results concerning the parameter-dependence of solutions. For example, $FC^{1,s}$-dependence (i.e., $C^{1,s}$-dependence in the sense of continuous Fr\'{e}chet differentiability) is available if $f$ is $FC^{0,s}$ (see \cite[Theorem 9.2, Remark 9.6]{amann1990}).\\[1em] \noindent
{\bf Further applications.} Several recent investigations in Lie theory (notably in the theory of infinite-dimensional Lie groups) showed the need for results as presented in this paper.  In particular, exponential laws are advanced tools for applications of $C^{r,s}$-maps in infinite dimensional Lie theory. Examples of projects which benefit from the results developed in this paper are given below. First we recall the notion of regularity:\\ If $G$ is a Lie group modelled on a locally convex space, with identity element $e$, we use the tangent map of the left translation $\lambda_g\colon G\to G$, $x\mapsto gx$ by $g\in G$ to define $g.v\coloneq T_e\lambda_g(v) \in T_g G$ for $v\in T_e(G)=:{\mathfrak g}$. Let $r\in \N_0\cup\{\infty\}$. Following \cite{Dah2011}, \cite{Glo2012}, \cite{gn07} and \cite{NaS}(cf.\ also \cite{GDL2011}), $G$ is called \emph{$C^r$-regular} if the initial value problem
\begin{displaymath}
\begin{cases}
\eta'(t)&= \eta(t).\gamma(t)\\
\eta(0) &= e
\end{cases}
\end{displaymath}
has a (necessarily unique) $C^{r+1}$-solution $\mbox{Evol}(\gamma):=\eta\colon [0,1]\to G$ for each $C^r$-curve $\gamma\colon [0,1]\to {\mathfrak g}$, and the map
\begin{displaymath}
 \mbox{evol}\colon C^r([0,1],{\mathfrak g})\to G,\quad \gamma\mapsto \mbox{Evol}(\gamma)(1)
\end{displaymath}
is smooth. If $G$ is $C^r$-regular and $r\leq s$, then $G$ is also $C^s$-regular. (If $r=\infty$, then $G$ is called \emph{regular} -- a property first defined in \cite{Mil1984}). Many results in infinite-dimensional Lie theory are only available for regular Lie groups (see \cite{Mil1984}, \cite{gn07}, \cite{Neeb2006}, \cite{NaS} and the references therein, cf.\ also \cite{KM1997}).
\begin{itemize}
\item[(a)]
Using Theorem B, one can show that the test function group $C^s_c(M,H)$ (as in \cite{hg2002}) is $C^r$-regular, for each $\sigma$-compact finite-dimensional smooth manifold $M$ and $C^r$-regular Lie group~$H$ (see \cite[p.\,270]{GDL2011} and \cite{Glox}).
\item[(b)] Let $M$ be as in (a). Using Theorems B and C, one can show that the diffeomorphism group $\mbox{Diff}(M)$ (as in \cite{Mic1980} or \cite{Glo2002}) is $C^0$-regular.
This generalizes the case of compact $M$ first obtained in \cite{OMK1982}.
\item[(c)] Likewise, Theorems B and C can be used to see that the diffeomorphism groups of $\sigma$-compact orbifolds are $C^0$-regular \cite{as2013}.
\item[(d)] Theorem A implies that $\mbox{Evol}\colon C^r([0,1],{\mathfrak g})\to C^{r+1}([0,1],G)$ is smooth, for each $C^r$-regular Lie group~$G$ \cite[Theorem A]{Glo2012}.
This result can be used to see that being $C^r$-regular is an extension property of Lie groups \cite[Theorem B.7]{NaS}. 
\item[(e)] Theorem B can be used to turn $C^{s+2}(\R,H)$ into a Lie group, for each $C^s$-regular Lie group $H$ \cite{al}.
(For the special case $s=\infty$ published in \cite{nw}, the known exponential laws for $C^\infty$-functions were sufficient). 
\item[(f)] Finally, consider a finite-dimensional Lie group $G$ and a projective limit $(\pi,E)$ of continuous Banach-representations of $G$. Theorem B has been used to see that the action $C^\infty_c(G)\times E^\infty\to E^\infty$ of the convolution algebra $C^\infty_c(G)$ of test functions on the space $E^\infty$ of smooth vectors is continuous \cite[Proposition B]{gl12} (although the conclusion becomes invalid for more general continuous representations of $G$ on locally convex spaces, see op.cit., Proposition~A).
\end{itemize} 
 
We mention that an analogous theory for $C^\alpha$-maps on products $U_1\times\cdots\times U_n$ (for $\alpha \in (\N_0\cup\{\infty\})^n$) has been presented in \cite{al}. Using an exponential law for $C^\alpha$-maps, one finds (as in \cite[III.]{wockel2006}) that every $C^\alpha$-map $f \colon I_1 \times \cdots \times I_n \rightarrow F$ from a product of closed intervals $I_1, \ldots ,I_n \subseteq \R$ to a Fr\'{e}chet space $F$ extends to a $C^\alpha$-map on $\R^n$.

\section{Preliminaries}
\numberwithin{equation}{subsection}
\textbf{Basic notation}: We write $\N \coloneq \set{1,2,3,\ldots}$ and $\N_0 \coloneq \set{0, 1, 2, \ldots}$. For a Banach space $(E,\norm{\cdot})$, we write $B_\ve^E (x) \coloneq \setm{y \in E}{\norm{x-y}< \ve}$ for $x \in E, \ve >0$. Finally for a mapping $f \colon E \rightarrow F$ which is Lipschitz-continuous we let $\Lip (f)$ be its minimum Lipschitz-constant.  

\begin{defn}\label{defn: deriv}
 Let $E, F$ be locally convex spaces, $U \subseteq E$ be an open subset and a map $f \colon U \rightarrow F$. If it exists, we define for $(x,h) \in U \times E$ the directional derivative $df(x,h) \coloneq D_h f(x) \coloneq \lim_{t\rightarrow 0} t^{-1} (f(x+th) -f(x))$. For $r \in \N_{0} \cup \set{\infty}$ we say that $f$ is $C^r$ if the iterated directional derivatives
    \begin{displaymath}
     d^{(k)}f (x,y_1,\ldots , y_k) \coloneq (D_{y_k} D_{y_{k-1}} \cdots D_{y_1} f) (x)
    \end{displaymath}
 exist for all $k \in \N_0$ such that $k \leq r$, $x \in U$ and $y_1,\ldots , y_k \in E$ and define continuous maps $d^{k} f \colon U \times E^k \rightarrow F$. If $f$ is $C^\infty$ it is also called smooth. We abbreviate $df \coloneq d^{(1)} f$.
\end{defn}

\begin{rem}
 If $E_1,E_2,F$ are locally convex topological spaces and $U \subseteq E_1,V \subseteq E_2$ open subsets together with a $C^1$-map $f \colon U \times V \rightarrow F$, then one may compute the \emph{partial derivative} $d^{(1,0)}f$ with respect to $E_1$. \\ It is defined as $d^{(1,0)} f \colon U \times V \times E_1 \rightarrow F, d^{(1,0)} f(x,y;z) \coloneq D_{(z,0)} f (x,y)$. Analogously one defines the partial derivative $d^{(0,1)} f$ with respect to $E_2$. The linearity of $df(x,y,\bullet)$ implies the so called Rule on Partial Differentials for $(x,y) \in U\times V, (h_1,h_2) \in E_1\times E_2$:   
  \begin{equation}\label{eq: rule:pdiff}
   df(x,y,h_1,h_2) = d^{(1,0} f(x,y,h_1) + d^{(0,1)} f(x,y,h_2)
  \end{equation}
 By \cite[Lemma 1.10]{hg2002a} $f \colon U \times V \rightarrow F$ is $C^1$ if and only if $d^{(1,0)}f$ and $d^{(0,1)}f$ exist and are continuous.
\end{rem}

\begin{defn}[Differentials on non-open sets]
 \begin{compactenum}
  \item The set $U \subseteq E$ is called \emph{locally convex} if every $x \in U$ has a convex neighbourhood $V$ in $U$.
  \item Let $U\subseteq E$ be a locally convex subset with dense interior. A mapping $f \colon U \rightarrow F$ is called $C^r$ if $f|_{U^\circ} \colon U^\circ \rightarrow F$ is $C^r$ and each of the maps $d^{(k)} (f|_{U^\circ}) \colon U^\circ \times E^k \rightarrow F$ admits a (unique) continuous extension $d^{(k)}f \colon U \times E^k \rightarrow F$. If $U \subseteq \R$ and $f$ is $C^{1}$, we obtain a continuous map $f' \colon U \rightarrow E, f'(x) \coloneq df(x)(1)$.\\ In particular if $f$ is of class $C^r$, we define recursively $f^{(k)} (x) = (f^{(k-1)})'(x)$ for $k \in \N_0$, such that $k \leq r$ where $f^{(0)} \coloneq f$.  
 \end{compactenum}
\end{defn}

\begin{rem}
 For the theory of $C^r$-maps, the reader is referred to \cite{hg2002a,gn07,Ham1982,Mic1980,Mil1984} (cf. also \cite{bgn}). In particular we shall use that $d^{(k)} f(x,\bullet) \colon E^k \rightarrow F$ is symmetric, $k$-linear; that compositions of composable $C^r$-maps are $C^r$; and the theorems of continuous and differentiable dependence of integrals on parameters (as recorded in \cite[Prop. 3.5]{Bil2007}). We shall also use the fact that a map $f \colon E \supseteq U \rightarrow F$ is $C^{r+1}$ if and only if $f$ is $C^1$ and $df \colon U \times E \rightarrow F$ is $C^r$.
\end{rem}

We recall the definition of the compact-open $C^r$-topology:

\begin{defn}\label{defn: CrTop}
 Let $E,F$ be locally convex topological vector spaces and $U \subseteq E$ a locally convex subset, $r \in \N_0 \cup \set{\infty}$. Denote by $C(U,F)$ the space of continuous maps from $U$ to $F$ with the compact open topology. Furthermore we denote by $C^r(U,F)$ the space of $C^r$-maps from $U$ to $F$. Endow $C^r(U,F)$ with the unique locally convex topology turning \begin{displaymath} 
   (d^{(j)} (\bullet))_{\N_0 \ni j \leq r} \colon C^{r} (U,F) \rightarrow \prod_{0 \leq j \leq r} C (U\times E^{j}, F) , f \mapsto (d^{(j)}f) 
  \end{displaymath}
 into a topological embedding. This topology is called the \emph{compact-open $C^r$ topology}. Notice that it is the initial topology with respect to the family of mappings $(d^{(j)} (\bullet))_{\N_0 \ni j \leq r}$.  
\end{defn}

\begin{defn}\label{defn: alt:CRtop}
 Let $E$ be a locally convex space, $r \in \N_0 \cup \set{\infty}$ and $J \subseteq \R$ be some non-degenerate interval. As $J$ is $\sigma$-compact we choose and fix a sequence of compact subsets $(K_n)_\N$ of $J$ with $J = \bigcup_{\N} K_n$. Fix a set $\Gamma$ of continuous seminorms which generate the topology on $E$. Let $C^r (J, E)$ the space of $C^r$-maps from $J$ to $E$ and consider on it the seminorms $\norm{\cdot}_{n,k,p}$ defined by 
   \begin{displaymath}
     \norm{\gamma}_{n,k,p} \coloneq \max_{j=0,\ldots ,k} \max_{t \in K_n} p \left(\gamma^{(j)} (t)\right)
    \end{displaymath}
  where $n \in \N$, $p \in \Gamma$ and $0\leq k \leq r$ with $k \in \N_0$. Endow $C^r(J,E)$ with the locally convex vector topology obtained from this family of seminorms.
  A variant of \cite[Proposition 4.4]{hg2002} shows that this topology is initial with respect to $d^{(j)} \colon C^r(J, E) \rightarrow C(J,E)_{\text{c.o}}, \gamma \mapsto d^{(j)} \gamma$, $0 \leq k \leq r$, i.e. it is the compact-open $C^r$-topology. 
\end{defn}

\begin{lem}\label{lem: cr:emb}
 Let $r \in \N_0 \cup \set{\infty}$, $E$ a locally convex vector space and $J$ be a non-degenerate interval. Then 
    \begin{displaymath}
     \Lambda \colon C^{r+1} (J,E) \rightarrow C(J,E) \times C^{r} (J,E) , \gamma \mapsto (\gamma , \gamma')
    \end{displaymath}
 is a linear topological embedding with closed image.
\end{lem}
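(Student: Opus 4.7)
The plan is to verify three things about $\Lambda$: linearity (with injectivity), the topological embedding property, and closedness of the image. Linearity of $\Lambda$ is immediate from the linearity of differentiation, and injectivity follows from $\pr_1\circ\Lambda=\id$.

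For the embedding property I would compare the generating families of seminorms directly. By Definition~\ref{defn: alt:CRtop} the compact-open $C^{r+1}$-topology on $C^{r+1}(J,E)$ is generated by $\norm{\gamma}_{n,k,p}$ for $n\in\N$, $p\in\Gamma$ and $0\le k\le r+1$. Pulling back the product topology on $C(J,E)\times C^r(J,E)$ through $\Lambda$ yields the family consisting of $\max_{t\in K_n}p(\gamma(t))$ from the first factor together with $\norm{\gamma'}_{n,k,p}=\max_{0\le j\le k}\max_{t\in K_n}p(\gamma^{(j+1)}(t))$ for $0\le k\le r$ from the second. The identities $\norm{\gamma}_{n,0,p}=\max_{t\in K_n}p(\gamma(t))$ and $\norm{\gamma}_{n,k,p}=\max(\norm{\gamma}_{n,0,p},\norm{\gamma'}_{n,k-1,p})$ for $k\ge 1$, combined with the trivial bound $\norm{\gamma'}_{n,k,p}\le\norm{\gamma}_{n,k+1,p}$, show that these two systems are equivalent. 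Hence $\Lambda$ is a topological embedding.

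For closedness of the image I would take a net $(\gamma_\alpha)$ in $C^{r+1}(J,E)$ with $(\gamma_\alpha,\gamma_\alpha')\to(\gamma,\delta)$ in $C(J,E)\times C^r(J,E)$ and show $\gamma\in C^{r+1}(J,E)$ with $\gamma'=\delta$. For any $t,t+h\in J^\circ$ and any continuous seminorm $p$ on $E$, applying the fundamental theorem of calculus to $\gamma_\alpha$ yields
\begin{equation*}
p\bigl(\gamma_\alpha(t+h)-\gamma_\alpha(t)-h\delta(t)\bigr)\le |h|\sup_{s\in[t,t+h]}p(\gamma_\alpha'(s)-\delta(t)).
\end{equation*}
Passing to the limit in $\alpha$, using uniform convergence $\gamma_\alpha'\to\delta$ on the compact interval $[t,t+h]$ on the right and continuity of $p$ combined with pointwise convergence $\gamma_\alpha\to\gamma$ on the left, gives the analogous inequality with $\gamma,\delta$ in place of $\gamma_\alpha,\gamma_\alpha'$. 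Dividing by $|h|$ and letting $h\to 0$ then yields $\gamma'(t)=\delta(t)$ by continuity of $\delta$ at $t$. Thus $\gamma|_{J^\circ}$ is $C^1$ with derivative $\delta|_{J^\circ}$, and since $\delta\in C^r(J,E)$, the characterization recalled in the remark preceding Definition~\ref{defn: CrTop} forces $\gamma\in C^{r+1}(J,E)$.

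The main obstacle will be the closed image step, specifically running the FTC argument without any completeness assumption on $E$: the integral $\int_0^1\delta(t+\tau h)\,d\tau$ need not a priori lie in $E$. The device above circumvents this by applying $p$ to the expression $\gamma_\alpha(t+h)-\gamma_\alpha(t)-h\delta(t)$, which already belongs to $E$ for every $\alpha$, before passing to the limit, so that only convergence of scalar quantities is required. The seminorm comparison and the continuous extension of the derivative to all of $J$ are routine.
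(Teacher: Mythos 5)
Your proof is correct, and its overall architecture (linearity, seminorm comparison for the embedding, a net argument via the fundamental theorem of calculus for the closed image) is the same as the paper's; the seminorm identities you use for the embedding step are exactly what the paper's ``it is easy to see'' is hiding, and they check out. The one genuine difference is how the closed-image step copes with the possible incompleteness of $E$. The paper writes $\gamma_\alpha(t)=\gamma_\alpha(x)+\int_x^t\gamma_\alpha'(s)\,ds$, passes to the limit to get $\gamma(t)=\gamma(x)+\int_x^t\eta(s)\,ds$ with the integral a priori only a weak integral in the completion $\tilde E$, and then observes a posteriori that this integral equals $\gamma(t)-\gamma(x)$ and hence lies in $E$, so the fundamental theorem can be invoked once more to conclude $\gamma'=\eta$. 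You never integrate the limit function $\delta$ at all: you apply a continuous seminorm to $\gamma_\alpha(t+h)-\gamma_\alpha(t)-h\delta(t)$, which lies in $E$ for every $\alpha$, estimate it by $|h|\sup_{s\in[t,t+h]}p(\gamma_\alpha'(s)-\delta(t))$ using the FTC for the (genuinely $C^1$) maps $\gamma_\alpha$, and only then pass to the limit, so the difference quotient of $\gamma$ is shown to converge to $\delta(t)$ directly. This buys a completion-free argument at the price of a slightly more quantitative estimate; the paper's route is shorter to state but needs the detour through $\tilde E$. Both versions rest on the same two facts you use: convergence in the compact-open topology gives uniform convergence of $\gamma_\alpha'$ on the compact interval $[t,t+h]\subseteq J$, and once $\gamma'=\delta$ holds on $J^\circ$, the $C^r$-property of $\delta$ on all of $J$ upgrades $\gamma$ to a $C^{r+1}$-map on $J$ in the sense of the definition for non-open domains.
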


\begin{proof}
 Clearly $\Lambda$ is a linear injective mapping. By the alternative description of the compact-open $C^r$-topology in \ref{defn: alt:CRtop} it is easy to see that $\Lambda$ is continuous and open onto its image. We are left to prove that $\im (\Lambda )$ is closed. To this end consider a net $(\gamma_\alpha)$ in $C^{r+1}(J,E)$ such that $\Lambda (\gamma_\alpha)$ converges in $C(J,E) \times C^r (J,E)$. Say its limit is $(\gamma , \eta )$ with $\gamma \in C(J,E)$ and $\eta \in C^r (J,E)$. Let $x\in J^\circ$. Then $\gamma_\alpha (x) \rightarrow \gamma (x)$ holds. By the fundamental theorem of calculus (\cite[Theorem 1.5]{hg2002a}) we have for $x,t \in J^\circ$: 
    \begin{equation}\label{eq: diff:int}
     \gamma_\alpha (t) =\gamma_\alpha (x) + \int_{x}^t \gamma_\alpha' (s) ds.
    \end{equation}
 Choosing $t_0$ small enough, we may assume that $[x-t_0,x+t_0]$ is contained in a finite union of the compact sets $K_n$ (see Definition \ref{defn: alt:CRtop}). As $\gamma_\alpha'$ converges to $\eta$ in the compact-open $C^r$-topology, we deduce from Definition \ref{defn: alt:CRtop} that on $[x-t_0,x+t_0]$ this net converges uniformly to $\eta$. Then \cite[Lemma 1.7]{hg2002a} implies for $t \in [x-t_0,x+t_0]$:  $\int_x^t \gamma_{\alpha } (s) ds \rightarrow \int_{x}^t \eta (s)ds$, where -for the moment- the right-hand side denotes the weak integral in the completion $\tilde{E}$ of $E$. Passing to the limit in \eqref{eq: diff:int} yields $\gamma (t) = \gamma (x) + \int_x^t \eta (s) ds$ from which we deduce that the integral coincides with $\gamma (t) - \gamma (x)$ and hence lies in $E$. Again by the fundamental theorem of calculus, we deduce that the derivative $\gamma' (t)$ exists for each $t \in [x-t_0,x+t_0]$ and coincides with $\eta (t)$. Since $x \in J^\circ$ was arbitrary, we infer that 
 $\gamma|_{J^\circ}$ is a map of class $C^1$ whose derivative equals $\eta|_{J^\circ}$. Therefore $\gamma$ is of class $C^{r+1}$ with derivative $\eta$ on $J$. Then $\lim \Lambda (\gamma_\alpha ) = (\gamma, \eta) = (\gamma, \gamma' ) = \Lambda (\gamma)$. Thus $\im (\Lambda)$ is closed. 
\end{proof}

\section{\texorpdfstring{Mappings of class {\boldmath $C^{r,s}$} on products of locally convex spaces}{Crs-mappings on products of locally convex spaces}}

\begin{defn}  \label{crs-map}
Let $E_1$, $E_2$ and $F$ be locally convex spaces, 
$U$ and $V$ open subsets of $E_1$ and $E_2$ respectively and $r,s \in \N_0 \cup \{\infty\}$. A mapping 
$ f\colon U \times V \rightarrow F $ is called a $C^{r,s}$-map, if for all $i,j \in \mathbb{N}_0$ such that 
$ i \le r, j \le s $ the iterated directional derivative 
$$d^{(i,j)}f(x,y,w_1,\dots,w_i,v_1,\dots,v_j):=(D_{(w_i,0)} \cdots D_{(w_1,0)}D_{(0,v_j)} \cdots D_{(0,v_1)}f ) (x,y)$$
exists for all $ x \in U, y \in V, w_1, \dots ,
 w_i \in E_1,  v_1, \dots ,v_j \in E_2$ and 
$$ d^{(i,j)}f: U \times V \times E^i_1 \times E^j_2 \rightarrow F ,$$
$$  \quad (x,y,w_1,\dots,w_i,v_1,\dots,v_j)\mapsto (D_{(w_i,0)} \cdots D_{(w_1,0)}D_{(0,v_j)} \cdots D_{(0,v_1)}f ) (x,y)$$
is continuous.
\end{defn}

More generally, it is useful to have a definition of $C^{r,s}$-maps on not necessarily open domains available:

\begin{defn}  \label{crs1-map}
Let $E_1$, $E_2$ and $F$ be locally convex spaces, 
$U$ and $V$ are locally convex subsets with dense interior of $E_1$ and $E_2,$ respectively, and $r,s \in \N_0 \cup \{\infty\}$, then 
we say that $ f\colon U \times V \rightarrow F $ is a $C^{r,s}$-map, if $f \lvert_{U^0 \times V^0} \colon {U^0 \times V^0}\rightarrow F$ is $C^{r,s}$-map and for all $i,j \in \N_0$ such that $ i \le r, j \le s $, the map
$$ d^{(i,j)}(f\lvert_{U^0 \times V^0}) \colon U^0 \times V^0 \times E^i_1 \times E^j_2 \rightarrow F $$
admits a continuous extension 
 $$d^{(i,j)}f\colon U \times V \times E^i_1 \times E^j_2 \rightarrow F .$$
\end{defn}

Definitions \ref{crs-map} and \ref{crs1-map} can be rephrased as follows:

\begin{lem}\label{fx-dij}
Let $E_1$, $E_2$ and $F$ be locally convex spaces, 
$U$ and $V$ be locally convex subsets with dense interior of $E_1$ and $E_2$ respectively and $r,s \in \N_0 \cup \{\infty\}$. Then 
$ f \colon U \times V \rightarrow F $ is $C^{r,s}$-map if and only if all of the following conditions are satisfied:
 \begin{compactenum}
\item For each $ x \in U$, the map $ f_x\coloneq f(x,\bullet)\colon V \rightarrow F, \; y \mapsto f_x(y)\coloneq f(x,y)$ is $C^s$.
\item For all $ y \in V$ and $j \in \N_0$ such that $j \leq s$ and $v \coloneq v_1,\dots,v_j \in E_2$, the map 
$d^{(j)}f_\bullet(y,v)\colon U \rightarrow F, \; x \mapsto (d^{(j)}f_x)(y,v)$ is $C^r$.
\item $d^{(i,j)}f\colon U \times V \times E^{i}_{1} \times E^{j}_{2} \rightarrow F, \; (x,y,w,v)\mapsto d^{(i)}(d^{(j)}f_\bullet(y,v))(x,w)$ is continuous, for all $j$ as in {\rm (b)}, $i \in \N_0$ such that $i \leq r$ and $w\coloneq (w_1,\ldots, w_i) \in E^i_1$.
\end{compactenum}
\end {lem}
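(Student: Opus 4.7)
The plan is to prove both implications first in the case of open domains $U\subseteq E_1$, $V\subseteq E_2$ (Definition \ref{crs-map}), and then lift to the general setting of Definition \ref{crs1-map} via the continuous extensions of the derivatives.

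The heart of the open case is the algebraic identification
\[
d^{(i,j)}f(x,y,w_1,\ldots,w_i,v_1,\ldots,v_j)=\bigl(D_{w_i}\cdots D_{w_1}\bigr)\bigl[\xi\mapsto (D_{(0,v_j)}\cdots D_{(0,v_1)}f)(\xi,y)\bigr](x),
\]
which follows because $D_{(w,0)}$ only differentiates in the first factor and $D_{(0,v)}$ only in the second; moreover $(D_{(0,v_j)}\cdots D_{(0,v_1)}f)(x,y)=d^{(j)}f_x(y,v_1,\ldots,v_j)$. If $f$ is $C^{r,s}$, then specializing to $i=0$ immediately yields (a); the identification together with the existence and continuity of $d^{(i,j)}f$ gives (b); and (c) is essentially a restatement of the continuity assumed in Definition \ref{crs-map}. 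The converse is a direct reading in the opposite direction: (a) and (b) produce the iterated directional derivatives that define $d^{(i,j)}f$ via the same identification, and (c) supplies their joint continuity.

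For locally convex $U,V$ with dense interior, by definition $f|_{U^0\times V^0}$ is $C^{r,s}$ in the open sense, and each $d^{(i,j)}(f|_{U^0\times V^0})$ extends continuously to $U\times V\times E_1^i\times E_2^j$. One must verify that these extensions actually behave like iterated directional derivatives of $f_x$ (for all $x\in U$) and of $d^{(j)}f_\bullet(y,v)$ (for all $y\in V$), not merely on the interiors. The tool is the fundamental theorem of calculus \cite[Theorem 1.5]{hg2002a}: for $x$ on the boundary of $U$, pick a sequence $x_n\in U^0$ converging to $x$, write
\[
f(x_n,y+tv)-f(x_n,y)=t\int_0^1 d^{(0,1)}f(x_n,y+\tau tv)(v)\,d\tau\qquad\text{for }y\in V^0,
\]
pass to the limit in $n$ using the continuous extension of $d^{(0,1)}f$, then divide by $t$ and let $t\to 0$ to recover $D_{(0,v)}f(x,y)$ and identify it with $d^{(0,1)}f(x,y,v)$. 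An induction in $i$ and $j$ (symmetrically in both factors) then upgrades this argument to the higher derivatives, yielding the identifications needed for (a), (b), (c) on the full domain.

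The main obstacle is precisely this boundary analysis: verifying that the continuous extensions of Definition \ref{crs1-map} can be interpreted pointwise as iterated directional derivatives at boundary points, so that the clean identification valid on $U^0\times V^0$ persists on all of $U\times V$. Once this is in place, the equivalence is a matter of bookkeeping: matching the three conditions of the lemma against Definitions \ref{crs-map} and \ref{crs1-map} through the identification displayed above.
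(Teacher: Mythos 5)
Your proposal follows essentially the same route as the paper: the open case is treated as the immediate algebraic identification of iterated partial directional derivatives, and the real content is showing that the continuous extensions in Definition \ref{crs1-map} are genuine directional derivatives at boundary points via the fundamental theorem of calculus, followed by induction in $i,j$ and bookkeeping for the converse. The only cosmetic difference is that you approximate a boundary point $x$ by an abstract sequence $x_n\in U^0$ and pass to the limit under the integral, whereas the paper parametrizes along the segment $x+\tau(z-x)$ towards an interior point $z$ of a convex neighbourhood; the latter also justifies that such approximating interior points exist, which is exactly where the local convexity of $U$ and the density of $U^0$ enter and which your phrasing takes for granted.
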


\begin{proof}
\textit{Step 1.} If $U,V$ are open subsets, then the equivalence is clear.\newline
Now the general case: Assume that $f$ is a $C^{r,s}$-map.\newline
\textit{Step 2.} If $x \in U^0 ,$ then for $j\in \N_0, \, j \leq s$
$$ D_{(0,v_j)}\cdots D_{(0,v_1)}f(x,y)=D_{v_j} \cdots D_{v_1}f_x(y)$$
exists for all $y \in V^0 $ and $v_1, \ldots,v_j \in E_2$, with continuous extension 
$$(y,v_1,\ldots,v_j) \mapsto d^{(0,j)}f(x,y,v_1,\ldots,v_j)$$
to  $V\times E^j_2 \rightarrow F.$ Hence $f_x\colon V\rightarrow F$ is $C^s.$ \newline
If $x \in U$ is arbitrary, $y \in V^0$ and $v_1 \in E_2,$ we show that $D_{v_1}f_x(y)$ exists and equals $d^{(0,1)}f(x,y,v_1)$. There exists $R >0$ such that $y+tv_1 \in V$ for all $t \in \R$, $\left|t\right| \leq R$ and there exists a relatively open convex neighbourhood $W \subseteq U$ of $x$ in $U.$
Because $U^0$ is dense, there exists $z \in U^0 \cap W.$ Since $W$ is convex, we have $x+\tau(z-x) \in W$ for all $\tau \in \left[0,1\right]$. Moreover, since $z \in W^0$ we have, $x + \tau(z-x)\in W^0$ for all $\tau \in \left(0,1\right] \subseteq U^0.$ Hence, for $\tau \in \left(0,1\right], \; f(x+\tau(z-x),y)$ is $C^s$ in $y$, and thus for $t \neq 0$
$$\frac{1}{t}(f(x+\tau(z-x),y+tv_1)-f(x+\tau(z-x),y))=\int_0^1 d^{(0,1)}f(x+\tau(z-x),y+ \sigma tv_1,v_1)\,d\sigma$$
by the Mean Value Theorem.
Now let $\tilde{F}$ be a completion of $F$. Because 
$$ h\colon \left[0,1\right] \times \left[-R,R\right] \times \left[0,1\right] \rightarrow \tilde{F} , \; (\tau ,t,\sigma)\mapsto d^{(0,1)}f(x+\tau(z-x),y+\sigma tv_1,v_1)$$
is continuous, also the parameter-dependent integral 
$$g\colon \left[0,1\right] \times \left[-R,R\right] \rightarrow \tilde{F}, \; g(\tau,t):=\int_0^1h(\tau,t,\sigma)\,d\sigma $$
is continuous. Fix $t \neq 0$ in $\left[-R,R\right].$ Then 
\begin{equation} \label{eq:mvt-tau}
g(\tau,t)= \frac{1}{t}(f(x+\tau(z-x),y+tv_1)-f(x+\tau(z-x),y))
\end{equation}
for all $\tau \in \left(0,1\right].$ By continuity of both sides in $\tau,$ (\ref{eq:mvt-tau}) also holds for $\tau =0.$
Hence 
$$ \frac{1}{t}(f(x,y+tv_1)-f(x,y))=g(0,t) \rightarrow g(0,0)$$
as $t\rightarrow 0.$ Thus $D_{v_1}f_x(y)$ exists and is given by
\begin{align*}
g(0,0)&=\int_0^1 d^{(0,1)}f(x,y,v_1)\, d \sigma 
       = d^{(0,1)}f(x,y,v_1).
\end{align*}
Holding $v_1$ fixed, we can repeat the argument to see that $D_{v_j}\cdots D_{v_1}f_x(y)$ exists for all $y \in V^0$ and $j \in \N_0$ such that $j\leq s$ and all $v_1,\ldots,v_j \in E_2,$ and is given by 
$$D_{v_j}\cdots D_{v_1}f_x(y) =d^{(0,j)}f(x,y,v_1,\ldots,v_j).$$
Since the right-hand side makes sense for $(y,v_1,\ldots,v_j) \in V \times E^j_2$ and is continuous there, $f_x$ is $C^s.$\newline
\textit{Step 3} Holding $v_1, \ldots,v_j \in E_2^j$ fixed, the function $(x,y) \mapsto d^{(0,j)}f(x,y,v_1,\ldots,v_j)$ is $C^{r,0}.$ By Step 2 (applied to the $C^{(0,r)}$ function $(y,x)\mapsto d^{(0,j)}f(x,y,v_1,\ldots,v_j)$) we see that for each $y \in V,$ the function $ U \rightarrow F, \; x \mapsto d^{(0,j)}f(x,y,v_1,\ldots,v_j)$ is $C^r $ and  $d^{(i)}(d^{(j)}f_\bullet(y,v))(x,w) = d^{(i,j)}f(x,y,w,v),$ which is continuous in $(x,y,w,v)\in U\times V \times E_1^i \times E_2^j.$ Hence if $f$ is $C^{r,s},$ then (a),(b) and (c) hold.

\textit{Step 1.} Conversely. Assume that (a),(b) and (c) hold. By Step 1, $f\lvert _{U^0 \times V^0}$ is $C^{r,s}$ and 
\begin{equation} \label{eq:dij-ddj}
d^{(i,j)}f\lvert_{U^0 \times V^0}(x,y,w,v)=d^{(i)}(d^{(j)}f_\bullet(y,v))(x,w) 
\end{equation}
for $(x,y) \in U^0 \times V^0, \, w \in E_1^i, \, v \in E_2^j.$
By (c), the right-hand side of (\ref{eq:dij-ddj}) extends to a continuous function $d^{(i,j)}f\colon U \times V \times E_1^i \times E^j_2 \rightarrow F.$ Hence $f$ is a $C^{r,s}$-map.   
\end{proof}
 
The following lemma will enable us to prove a version of the Theorem of Schwarz for $ C^{r,s}$-maps.

\begin{lem} \label{ini-sch}
Let $E_1,\, E_2$ and $F$ be locally convex spaces, 
$ f\colon U \times V \rightarrow F $ be a $C^{1,1}$-map on open subsets $U \subseteq E_1,\, V \subseteq E_2$ and $w \in E_1, \, v \in E_2$ such that $D_{(w,0)}D_{(0,v)}f$ exists and is continuous as a map $U \times V \to F.$ Then also $D_{(0,v)}D_{(w,0)}f$ exists and coincides with $D_{(w,0)}D_{(0,v)}f$.  
\end{lem}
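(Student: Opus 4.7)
The plan is to reduce the statement to a classical Schwarz-type equality in two real parameters. Fix $(x_0, y_0) \in U \times V$ and choose $\varepsilon > 0$ small enough that $(x_0 + tw, y_0 + sv) \in U \times V$ for all $|t|,|s| < \varepsilon$. Set $g(t,s) \coloneq f(x_0+tw, y_0+sv)$. Since $f$ is $C^{1,1}$, the partials $\partial_t g$ and $\partial_s g$ exist and are jointly continuous in $(t,s)$, and by the lemma's hypothesis $\partial_t \partial_s g$ exists and is jointly continuous on $(-\varepsilon,\varepsilon)^2$. The goal becomes: $\partial_s \partial_t g(0,0)$ exists and equals $\partial_t \partial_s g(0,0)$, which translates back to the desired identity at $(x_0, y_0)$.

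The commutation I would derive via the fundamental theorem of calculus \cite[Theorem 1.5]{hg2002a}. Starting from
\[
\partial_s g(t, s') - \partial_s g(0, s') = \int_0^t \partial_t \partial_s g(t'', s') \, dt''
\]
and integrating in $s'$ over $[0,s]$, a second application of the fundamental theorem on the left gives
\[
g(t,s) - g(t,0) - g(0,s) + g(0,0) = \int_0^s\!\!\int_0^t \partial_t\partial_s g(t'',s')\,dt''\,ds'.
\]
Dividing by $t$ and letting $t \to 0$, the left side tends to $\partial_t g(0,s) - \partial_t g(0,0)$ by definition of $\partial_t g$; after swapping the order of integration by Fubini, the right side tends to $\int_0^s \partial_t\partial_s g(0,s')\,ds'$, using the standard limit $\frac{1}{t}\int_0^t \phi(t'')\,dt'' \to \phi(0)$ applied to the function $\phi(t'') \coloneq \int_0^s \partial_t\partial_s g(t'',s')\,ds'$, which is continuous in $t''$ by continuous dependence of integrals on parameters \cite[Prop.\ 3.5]{Bil2007}. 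A final application of the fundamental theorem of calculus then shows that the right-hand side is differentiable in $s$ at $s=0$ with derivative $\partial_t\partial_s g(0,0)$; hence so is $\partial_t g(0,\cdot)$, yielding $\partial_s \partial_t g(0,0) = \partial_t \partial_s g(0,0)$.

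Since $(x_0,y_0) \in U\times V$ was arbitrary, this gives the lemma. The technical subtlety I anticipate is that $F$-valued integrals of continuous functions a priori live in a completion $\tilde F$; however, each single-variable integral here arises as a genuine difference of values of $g$ or $\partial_s g$, so it already lies in $F$, and the iterated integral therefore does too. The cited fundamental-theorem-of-calculus and parameter-integral results are stated for general locally convex targets, so no completion issues obstruct the final identity.
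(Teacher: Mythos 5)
Your argument is correct, but it reaches the key identity by a different route than the paper. The paper starts from the mean value theorem in the $v$-direction, $\frac{1}{t}\bigl(f(x+sw,y+tv)-f(x+sw,y)\bigr)=\int_0^1 D_{(0,v)}f(x+sw,y+rtv)\,dr$, then \emph{differentiates this parameter-dependent integral with respect to $s$} (citing the differentiability of integrals with parameters) to obtain $\frac{1}{t}\bigl(D_{(w,0)}f(x,y+tv)-D_{(w,0)}f(x,y)\bigr)=\int_0^1 D_{(w,0)}D_{(0,v)}f(x,y+rtv)\,dr$, and finally lets $t\to 0$ using continuity of the integral in $t$. You instead integrate \emph{twice}: you represent the second difference $g(t,s)-g(t,0)-g(0,s)+g(0,0)$ as a double integral of the mixed partial, apply Fubini, and recover the same one-variable identity $D_{(w,0)}f(x,y+sv)-D_{(w,0)}f(x,y)=\int_0^s D_{(w,0)}D_{(0,v)}f(x,y+s'v)\,ds'$ by an extra limit $t\to 0$. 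Your version trades differentiation under the integral sign for Fubini on weak integrals of continuous functions (valid here by testing against continuous functionals, since they separate points of $\tilde F$), and is the more classical ``second difference'' argument; the paper's is shorter because the cited parameter-integral results do the work in one step. One small imprecision in your closing remark: the function $\phi(t'')=\int_0^s \partial_t\partial_s g(t'',s')\,ds'$ is \emph{not} a priori a difference of $F$-valued quantities (that would presuppose the conclusion), so it lives only in $\tilde F$ until the end; this is harmless, since the final identity equates $F$-valued expressions and $F$ embeds topologically into $\tilde F$, but the justification you give for dismissing completion issues is not quite the right one.
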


\begin{proof}
After replacing $F$ with a completion, we may assume that $F$ is complete. Fix $x \in U,\, y \in V.$ There is $\varepsilon >0$ such that $x+sw \in U$ and $y+tv \in V$ for all $s,t \in B^{\R}_{\varepsilon}(0).$ For $t \neq 0$ as before, we have
\begin{equation} \label{eq:swtv}  
\frac {1}{t}(f(x+sw , y+tv)- f(x+sw, y))=\int^{1}_{0} D_{(0,v)}f (x+sw,y+rtv)dr.
\end{equation}
For fixed $t$, consider  the map 
$$ g\colon B^{\R}_{\varepsilon}(0) \to F, \, g(s)\coloneq\int^{1}_{0} D_{(0,v)}f (x+sw,y+rtv)dr.$$
The map $[0,1] \times B^{\R}_{\varepsilon}(0) \to F, \, (r,s) \mapsto D_{(0,v)}f (x+sw,y+ rt) $ is differentiable in $s,$ with partial derivative $D_{(w,0)}D_{(0,v)}f(x+sw,y+rtv)$ which is continuous in $(r,s).$ Hence, by \cite[Proposition 3.5]{Bil2007}, $g$ is $C^1$ and 
$$g'(0)= \int^{1}_{0}  D_{(w,0)}D_{(0,v)}f(x,y+rtv)dr.$$ Hence (\ref{eq:swtv}) can be differentiated with respect to $s,$ and 
\begin{equation} \label{eq:swtv1}  
\frac {1}{t}(D_{(w,0)}f(x, y+tv)- D_{(w,0)}f(x, y))=\int^{1}_{0}  D_{(w,0)}D_{(0,v)}f(x,y+rtv)dr.
\end{equation}
Note that, for fixed $x$, $v$ and $w,$ the integrand in (\ref{eq:swtv1}) also makes sense for $t=0,$ and defines a continuous function $h\colon [0,1] \times B^{\R}_{\varepsilon}(0)\to F$ of $(r,t).$  
By \cite[Proposition 3.5]{Bil2007}, the function 
$$H\colon B^{\R}_{\varepsilon}(0) \to F,\, H(t):=\int^{1}_{0} h(r,t)dr$$
is continuous. If $t\neq0$ this function coincides with $\frac {1}{t}(D_{(w,0)}f(x, y+tv)- D_{(w,0)}f(x, y)),$ by (\ref{eq:swtv1}). Hence 
\begin{align*}
&D_{(0,v)}D_{(w,0)}f(x,y)=\lim_{t\to 0}\frac{1}{t}(D_{(w,0)}f(x, y+tv)- D_{(w,0)}f(x, y))\\
&= \lim_{t\to 0}H(t)=H(0)=\int^{1}_{0}h(r,0)dr=D_{(w,0)}D_{(0,v)}f(x,y)
\end{align*}
exists and has the asserted form. 
\end{proof}

\begin{lem}\label{semi-schwarz}
Let $E_1$, $E_2$ and $F$ be locally convex spaces, 
$U$ and $V$ be open subsets of $E_1$ and $E_2,$ respectively, and $r \in \N_0 \cup \{\infty\}$. If
$ f\colon  U \times V \rightarrow F $ is a $C^{r,1}$-map, then 
$$D_{(0,v)} D_{(w_i,0)} \cdots D_{(w_1,0)}f(x,y)$$
exists for all $i \in \N$ such that $i \leq r,\;(x,y)\in U \times V ,\; v \in E_2$ and $w_1,\ldots, w_i \in E_1,$ and it coincides with  $d^{(i,1)}f(x,y,w_1,\ldots,w_i,v).$
\end{lem}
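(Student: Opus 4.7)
The plan is to prove the lemma by induction on $i$, using Lemma~\ref{ini-sch} as both the base case and the engine of the inductive step. The base case $i=1$ is immediate: since $f$ is $C^{r,1}$ with $r\geq 1$, the map $D_{(w_1,0)}D_{(0,v)}f = d^{(1,1)}f(\cdot,\cdot,w_1,v)$ exists and is continuous, and $f$ is in particular $C^{1,1}$, so Lemma~\ref{ini-sch} produces $D_{(0,v)}D_{(w_1,0)}f = d^{(1,1)}f(\cdot,\cdot,w_1,v)$.

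For the inductive step, assume the statement holds for $i-1$ (with $2\leq i\leq r$). Fix $w_1,\dots,w_i\in E_1$ and $v\in E_2$, and consider the auxiliary function
\begin{displaymath}
g\colon U\times V\to F,\qquad g(x,y)\coloneq d^{(i-1,0)}f(x,y,w_1,\dots,w_{i-1}).
\end{displaymath}
I would first identify its various first-order partial derivatives. By the definition of $C^{r,1}$, the derivative $D_{(w_i,0)}g$ exists and equals $d^{(i,0)}f(x,y,w_1,\dots,w_i)$, which is continuous. Next, $D_{(0,v)}g(x,y)=D_{(0,v)}D_{(w_{i-1},0)}\cdots D_{(w_1,0)}f(x,y)$, and by the induction hypothesis this equals $d^{(i-1,1)}f(x,y,w_1,\dots,w_{i-1},v)$, which is continuous. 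Applying the inductive formula once more (or just the $C^{r,1}$-property of $f$) shows that $D_{(w_i,0)}D_{(0,v)}g$ exists and equals $d^{(i,1)}f(x,y,w_1,\dots,w_i,v)$, again continuous.

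These facts, combined with the Rule on Partial Differentials, show first that $g$ is $C^1$, and then that $g$ is a $C^{1,1}$-map on $U\times V$. Therefore Lemma~\ref{ini-sch} applies to $g$ with the directions $w_i$ and $v$, giving that $D_{(0,v)}D_{(w_i,0)}g$ exists and equals $D_{(w_i,0)}D_{(0,v)}g = d^{(i,1)}f(x,y,w_1,\dots,w_i,v)$. Rewriting the left-hand side yields
\begin{displaymath}
D_{(0,v)}D_{(w_i,0)}\cdots D_{(w_1,0)}f(x,y)=d^{(i,1)}f(x,y,w_1,\dots,w_i,v),
\end{displaymath}
which is the claim for $i$.

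The main obstacle I anticipate is the bookkeeping needed to verify that the auxiliary map $g$ is genuinely $C^{1,1}$; the existence and continuity of $D_{(0,v)}g$ is not immediate from the $C^{r,1}$-hypothesis on $f$ alone, because the outermost directional derivative has the ``wrong'' position. This is precisely where the inductive hypothesis must be invoked, so the proof is only correct because the induction is structured to feed its conclusion back in as the existence of $D_{(0,v)}g$ before the next application of Lemma~\ref{ini-sch}.
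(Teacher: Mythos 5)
Your proof is correct and follows essentially the same route as the paper's: the same induction on $i$, the same auxiliary map $g(x,y)=d^{(i-1,0)}f(x,y,w_1,\dots,w_{i-1})$, and the same verification that $g$ is $C^{1,1}$ (with the induction hypothesis supplying the existence and continuity of $D_{(0,v)}g$, and the $C^{r,1}$-property of $f$ supplying $D_{(w_i,0)}D_{(0,v)}g$) before applying Lemma~\ref{ini-sch}. Nothing further is needed.
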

\begin{proof}
The proof is by induction on $i.$ \textit{The case $i=1.$} This is covered by Lemma \ref{ini-sch}.\newline
\textit{Induction step.} Assume that $i>1$. By induction, we know that 
$$D_{(0,v)} D_{(w_{i-1},0)} \cdots D_{(w_1,0)}f(x,y)$$
exists and coincides with 
\begin{equation} \label{eq:imin-o} 
d^{(i-1,1)}f(x,y,w_1,\ldots,w_{i-1},v).
\end{equation}
Define $g\colon U \times V \rightarrow F$ via 
$$g(x,y)=D_{(w_{i-1},0)} \cdots D_{(w_1,0)}f(x,y)=d^{(i-1,0)}f(x,y,w_1,\ldots,w_{i-1}).$$
Then $g$ is $C^{(1,0)}$ ($f$ is $C^{r,1}$ and $r \geq i,$ hence we can differentiate once more in the first variable).
By induction, $g$ is differentiable in the second variable with
\begin{align} 
\label{gd-dg}
 D_{(0,v)}g(x,y) &= d^{(i-1,1)}f(x,y,w_1,\ldots,w_{i-1},v)\\ 
\label{gd-dg1} 
                 &= D_{(w_{i-1},0)} \cdots D_{(w_1,0)}D_{(0,v)}f(x,y),
\end{align}
which is continuous in $(v,x,y).$ Hence $g$ is $C^{0,1}.$ and $d^{(0,1)}g(x,y,v)$ is given by (\ref{eq:imin-o}). Because $f$ is $C^{r,1}$ and $r \geq i,$ the right-hand side of (\ref{gd-dg}) can be differentiated once more in the first variable, hence also $D_{(0,v)}g(x,y),$ with 
\begin{align*}
d^{(1,1)}g(x,y,w_i,v) = D_{(w_i,0)}D_{(0,v)}g(x,y)&= D_{(w_i,0)}\cdots D_{(w_1,0)}D_{(0,v)}f(x,y)\\
                                                  &= d^{(i,1)}f(x,y,w_1,\ldots,w_{i},v).
\end{align*}
As this map is continuous, $g$ is $C^{1,1}.$ By Lemma \ref{ini-sch}, also $D_{(0,v)}D_{(w_i,0)}g(x,y)$ exists and is given by 
$D_{(w_i,0)}D_{(0,v)}g(x,y)= d^{(i,1)}f(x,y,v,w_1,\ldots,w_{i})$
(where we used (\ref{gd-dg1})). But, by definition of $g,$ 
$$D_{(0,v)}D_{(w_i,0)}g(x,y)=D_{(0,v)}D_{(w_i,0)}D_{(w_{i-1},0)}\cdots D_{(w_1,0)}f(x,y).$$
Hence $D_{(0,v)}D_{(w_i,0)} \cdots D_{(w_1,0)}f(x,y)=d^{(i,1)}f(x,y,v,w_1,\ldots,w_{i}).$
\end{proof}

\begin{thm}[Schwarz' Theorem] \label{schwarz}
Let $E_1$, $E_2$ and $F$ be locally convex spaces and 
$ f\colon U \times V \rightarrow F $ be $C^{r,s}$-map on open subsets 
$U \subseteq E_1,\, V \subseteq E_2.$ Let $ x \in U, \, y \in V,\,w_1, \dots 
,w_i \in E_1$ and $ w_{i+1},\dots,w_{i+j} \in   E_2.$ Define $w_k^*:=(w_k,0)$ if $k \in \{1,\ldots,i\}$ and $w^*_k:=(0,w_k)$ if $k \in \{i+1,\ldots,i+j\}.$ Let $i,j \in \mathbb{N}_0 $ with $ i\le r,j \le s $ and $ \sigma  \in S_{i+j}  $  be a permutation of 
$\left\{1,\dots,i+j\right\}.$ Then the iterated directional derivative 
$$(D_{w^*_{\sigma(1)}} \cdots D_{w^*_{\sigma(i+j)}}f ) (x,y)$$
exists and coincides with
$$d^{(i,j)}f(x,y,w_1,\dots,w_i,w_{i+1},\dots,w_{i+j}).$$
\end{thm}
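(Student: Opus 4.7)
The plan is to induct on $n = i+j$, using Lemma~\ref{semi-schwarz} together with its mirror form (obtained by applying Lemma~\ref{semi-schwarz} to the $C^{s,r}$-map $\tilde f(y,x) \coloneq f(x,y)$) to peel off the innermost derivative and commute it through the opposite-type block.

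Before starting the induction, I would record a \emph{within-type symmetry} of the canonical derivative: for each fixed $(x,y)\in U \times V$, the value $d^{(i,j)}f(x,y,w_1,\ldots,w_i,v_1,\ldots,v_j)$ is symmetric separately in $(w_1,\ldots,w_i)$ and in $(v_1,\ldots,v_j)$. The $v$-symmetry is immediate from classical Schwarz applied to the $C^s$-map $f(x,\bullet)$ (Lemma~\ref{fx-dij}(a)), which commutes the inner block $D_{(0,v_j)}\cdots D_{(0,v_1)}f(x,y)$; applying the outer block $D_{(w_i,0)}\cdots D_{(w_1,0)}$ preserves this. The $w$-symmetry follows from classical Schwarz applied to the $C^r$-map $\phi \colon x \mapsto d^{(0,j)}f(x,y,v_1,\ldots,v_j)$ (Lemma~\ref{fx-dij}(b)) together with the identity $d^{(i,j)}f(x,y,w,v) = d^{(i)}\phi(x,w_1,\ldots,w_i)$.

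Now induct on $n = i+j$; the bases $n \le 1$ are trivial. For the step from $n$ to $n+1$, fix $\sigma \in S_{i+j}$ (with $i+j = n+1$) and put $k = \sigma(n+1)$, the index of the innermost derivative. In \emph{Case A} ($k \le i$, so $w^*_k = (w_k,0)$), set $g(x,y) \coloneq D_{(w_k,0)}f(x,y)$. The crucial point is that $g$ is $C^{r-1,s}$, with
\[ d^{(i',j')}g(x,y,u_1,\ldots,u_{i'},v'_1,\ldots,v'_{j'}) = d^{(i'+1,j')}f(x,y,w_k,u_1,\ldots,u_{i'},v'_1,\ldots,v'_{j'}) \]
for $i' \le r-1$, $j' \le s$; verifying this reduces to the identity
\[ D_{(0,v'_{j'})}\cdots D_{(0,v'_1)}D_{(w_k,0)}f = D_{(w_k,0)}D_{(0,v'_{j'})}\cdots D_{(0,v'_1)}f, \]
which is exactly the mirror of Lemma~\ref{semi-schwarz}. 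In \emph{Case B} ($k = i+l > i$, so $w^*_k = (0,v_l)$), set $g(x,y) \coloneq D_{(0,v_l)}f(x,y)$; then $g$ is $C^{r,s-1}$ with $d^{(i',j')}g(x,y,u,v') = d^{(i',j'+1)}f(x,y,u,v'_1,\ldots,v'_{j'},v_l)$, and the only commutations needed are within the $E_2$-block, handled by classical Schwarz on $f(x,\bullet)$.

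Applying the inductive hypothesis to $g$ with the restricted permutation $\sigma|_{\{1,\ldots,n\}}$ then gives $D_{w^*_{\sigma(1)}}\cdots D_{w^*_{\sigma(n)}}g(x,y)$ equal to the canonical derivative of $g$ on the remaining vectors. Substituting the formulas above for $d^{(i',j')}g$ re-expresses this as a value of $d^{(i,j)}f$ at $(x,y)$, with $w_k$ (resp.\ $v_l$) occupying a non-canonical slot; the within-type symmetry from the preliminary step then moves it into its proper position, yielding $d^{(i,j)}f(x,y,w_1,\ldots,w_i,v_1,\ldots,v_j)$. Since $D_{w^*_{\sigma(1)}}\cdots D_{w^*_{\sigma(n+1)}}f = D_{w^*_{\sigma(1)}}\cdots D_{w^*_{\sigma(n)}}g$ by construction, the induction closes. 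The main obstacle is Case~A: there one truly has to commute an $E_1$-derivative past a block of $E_2$-derivatives before the full theorem is available, and this is precisely what the mirror of Lemma~\ref{semi-schwarz} supplies. Case~B is comparatively free, since it only rearranges derivatives within the $E_2$-block.
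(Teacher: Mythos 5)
Your decomposition is the mirror image of the paper's: you peel off the \emph{innermost} derivative $D_{w^*_{\sigma(n+1)}}$ and must then commute a single $E_1$-derivative past the $E_2$-block (your Case~A), whereas the paper peels off the \emph{outermost} derivative $D_{w^*_{\sigma(1)}}$ and must commute a single $E_2$-derivative past the $E_1$-block --- which is exactly what Lemma~\ref{semi-schwarz} provides in the form stated. The within-type symmetry arguments and the use of classical Schwarz for the pure cases match the paper. The structure of your induction is viable, but the key step is not justified as written.

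The gap is in how you obtain the ``mirror of Lemma~\ref{semi-schwarz}.'' You propose to get it by applying Lemma~\ref{semi-schwarz} to $\tilde f(y,x)\coloneq f(x,y)$, ``the $C^{s,r}$-map.'' But the statement that $\tilde f$ is $C^{s,r}$ (Corollary~\ref{rs-sr}) is itself an instance of the theorem you are proving: by Definition~\ref{crs-map}, $d^{(j,1)}\tilde f(y,x,v_1,\ldots,v_j,w)$ \emph{is} the non-canonical derivative $D_{(0,v_j)}\cdots D_{(0,v_1)}D_{(w,0)}f(x,y)$, i.e.\ precisely the object whose existence you are trying to establish in Case~A; in the paper, Corollary~\ref{rs-sr} is deduced \emph{from} Theorem~\ref{schwarz}, not available before it. Moreover, even granting the hypothesis, Lemma~\ref{semi-schwarz} applied to $\tilde f$ runs in the wrong direction: its hypothesis is the existence and continuity of the canonical derivatives of $\tilde f$ (the non-canonical ones of $f$), and its conclusion is the existence of $D_{(w,0)}D_{(0,v_j)}\cdots D_{(0,v_1)}f$ --- the canonical form of $d^{(1,j)}f$, which you already have for free. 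What you actually need is the converse implication. The needed mirror lemma is true, but it requires its own proof: an induction on $j$ parallel to the proof of Lemma~\ref{semi-schwarz}, applying Lemma~\ref{ini-sch} (in its stated form, which has the canonical order as hypothesis) to the auxiliary maps $F(x,y)\coloneq d^{(0,j-1)}f(x,y,v_1,\ldots,v_{j-1})$. Supply that lemma and your argument closes; alternatively, peel the outermost derivative instead, as the paper does, so that Lemma~\ref{semi-schwarz} applies directly.
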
 
                                   
%schwarz theorem proof
\begin{proof}
The proof is by induction on $i+j.$ \textit{The case $i+j=0$} is trivial.\newline
\textit{The case $i=0$ or $j=0$.} If $i=0,$ then the assertion follows from Schwarz Theorem for the $C^s$-function $f(x,\bullet)\colon  V\rightarrow F .$ Likewise if  $j=0,$ then the assertion follows from Schwarz Theorem for the $C^r$-function $f(\bullet,y)\colon  U \rightarrow F .$ (see \cite{gn07}).\newline
\textit{The case $i,j \geq 1.$} If $\sigma(1) \in \{1,\ldots,i\},$ then by induction,
\begin{align*}
&D_{w^*_{\sigma(2)}}\cdots D_{w^*_{\sigma(i+j)}}f(x,y)\\
&=d^{(i-1,j)}f(x,y,w_1,\ldots,w_{\sigma(1)-1},w_{\sigma(1)+1},\ldots,w_i,w_{i+1},\ldots,w_{i+j}).
\end{align*}
Because $f$ is $C^{i,j},$ we can differentiate once more in first variable:
\begin{align*}
&D_{w^*_{\sigma(1)}}\cdots D_{w^*_{\sigma(i+j)}}f(x,y)\\ &=d^{(i,j)}f(x,y,w_1,\ldots,w_{\sigma(1)-1},w_{\sigma(1)+1},\ldots,w_i,w_{\sigma(1)};w_{i+1},\ldots,w_{i+j})\\
&= d^{(i,j)}f(x,y,w_1,\ldots,w_i;w_{i+1},\ldots,w_{i+j}).
\end{align*}
For the final equality we used that
$$d^{(i,j)}f(x,y,z_1,\ldots,z_i,v_1,\ldots,v_j)=d^{(i)}(d^{(j)}f_\bullet(y,v_1,\ldots,v_j))(x,z_1,\ldots,z_j)$$
is symmetric in $z_1,\ldots,z_i$, as $g(x):=d^{(j)}f_x(y,v_1,\ldots,v_j)$ is $C^r$ in $x$ (see Lemma \ref{fx-dij}). \newline
If $\sigma(1) \in \{i+1,\ldots,i+j\},$ then by induction,
$$D_{w^*_{\sigma(2)}}\cdots D_{w^*_{\sigma(i+j)}}f(x,y)=d^{(i,j-1)}f(x,y,w_1,\ldots,w_i,\ldots,w_{\sigma(1)-1},w_{\sigma(1)+1},\ldots,w_{i+j}).$$
For fixed $w_{i+1},\ldots,w_{i+j},$ consider the function $h\colon U\times V\rightarrow F,$
$$h(x,y):=d^{(0,j-1)}f(x,y, w_{i+1},\ldots,w_{\sigma(1)-1},w_{\sigma(1)+1},\ldots,w_{i+j}),$$
which is $C^{r,s-(j-1)}.$\newline
By Lemma \ref{semi-schwarz}, 
$$D_{w^*_{\sigma(1)}}D_{w^*_i}\cdots D_{w^*_1}h(x,y)$$
exists and coincides with
$$D_{w^*_i}\cdots D_{w^*_1}D_{w^*_{\sigma(1)}}h(x,y).$$

Now
\begin{align*}
D_{w^*_{\sigma(2)}}\cdots D_{w^*_{\sigma(i+j)}}f(x,y)&= d^{(i,j-1)}f(x,y,w_1,\ldots,w_{\sigma(1)-1},w_{\sigma(1)+1},\ldots,w_{i+j})\\
                                             &= D_{w^*_i}\cdots D_{w^*_1}h(x,y).
\end{align*} 
By the preceding, we can apply, $D_{w^*_{\sigma(1)}}$, i.e., $D_{w^*_{\sigma(1)}}\cdots D_{w^*_{\sigma(i+j)}}f(x,y)$ exists and coincides with
\begin{align*}
&D_{w^*_i}\cdots D_{w^*_1}D_{w^*_{\sigma(1)}}h(x,y)\\
&= d^{(i,j)}f(x,y,w_1,\ldots,w_i,w_{i+1},\ldots,w_{\sigma(1)-1},w_{\sigma(1)+1},\ldots,w_{i+j},w_{\sigma(1)})\\
&=d^{(i)}(d^{(j)}f_\bullet(y,w_{i+1},\ldots,w_{\sigma(1)-1},w_{\sigma(1)+1},\ldots,w_{i+j},w_{\sigma(1)}))(x,w_1,\ldots,w_i)
\end{align*}
where $d^{(j)}f_x(y,v_1,\ldots,v_j)$ is symmetric in $v_1,\ldots,v_j$ by the Schwarz Theorem for the $C^s$-function $f_x.$
Hence 
$$d^{(j)}f_x(y,w_{i+1},\ldots,w_{\sigma(1)-1},w_{\sigma(1)+1},\ldots,w_{i+j},w_{\sigma(1)})= d^{j}f_x(y,w_{i+1},\ldots,w_{i+j})$$
for all $x.$ Hence also after differentiations in $x$:
$$d^{(i)}(d^{(j)}f_\bullet(y,w_{i+1},\ldots,w_{\sigma(1)-1},w_{\sigma(1)+1},\ldots,w_{i+j},w_{\sigma(1)}))(x,w_1,\ldots,w_i)$$
coincides with 
$$d^{(i,j)}f(x,y,w_1,\ldots,w_{i+j})=d^{(i)}(d^{(j)}f_\bullet(y,w_{i+1},\ldots,w_{i+j}))(x,w_1,\ldots,w_i).$$ 

\end{proof}

\begin{rem} \label{sdint}
If $U$ and $V$ are merely locally convex subsets with dense interior in the situation of Theorem \ref{schwarz}, then 
\begin{equation} \label{eq:sch-sdi}
(D_{w^*_{\sigma(1)}}\cdots D_{w^*_{\sigma(i+j)}}f)(x,y)
\end{equation}
exists for all $x\in U^0, \, y \in V^0,$ and the map $d^{(i,j)}f(x,y,w_1,\ldots,w_{i+j})$ provides a continuous extension of (\ref{eq:sch-sdi}) to all of $U \times V \times E^i_1 \times E^j_2.$
\end{rem}

\begin{cor}\label{rs-sr}
Let $E_1$, $E_2$ and $F$ be locally convex spaces, 
$U$ and $V$ be locally convex subsets with dense interior of $E_1$ and $E_2$ respectively. If $f\colon  U \times V \rightarrow F $ is $C^{r,s},$ then $$g\colon V\times U \rightarrow F, \, (y,x) \mapsto f(x,y)$$
is a $C^{s,r}$-map, and 
$$d^{(j,i)}g(y,x,v_1,\ldots,v_j,w_1,\ldots,w_i)=d^{(i,j)}f(x,y,w_1,\ldots,w_i,v_1,\ldots,v_j)$$
for all $i,j \in \N_0$ with $i \leq r, \, j \leq s, \, x \in U, y \in V,\, w_1,\ldots,w_i \in E_1$ and $v_1,\ldots,v_j \in E_2.$ 
\end{cor}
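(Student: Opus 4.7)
The plan is to derive everything directly from Schwarz's Theorem \ref{schwarz} (together with Remark \ref{sdint} for the non-open case), reducing $C^{s,r}$-ness of $g$ to a permutation of the iterated directional derivatives that already exist for $f$.

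First I would treat the case where $U$ and $V$ are open. The crucial observation is that under the coordinate swap $(y,x)\mapsto(x,y)$ the ``first-variable'' directions of $g$, i.e.\ vectors of the form $(v,0)\in E_2\times E_1$, correspond to the ``second-variable'' directions of $f$, namely $(0,v)\in E_1\times E_2$; symmetrically for the other factor. Concretely, if I denote by $w^*_k$ the insertions as in Theorem \ref{schwarz} (with the first $i$ entries in $E_1$ and the remaining $j$ in $E_2$), then
\[
(D_{(v_j,0)}\cdots D_{(v_1,0)}D_{(0,w_i)}\cdots D_{(0,w_1)}g)(y,x)
=(D_{(0,v_j)}\cdots D_{(0,v_1)}D_{(w_i,0)}\cdots D_{(w_1,0)}f)(x,y).
\]
Since $f$ is $C^{r,s}$ on open sets, Theorem \ref{schwarz} guarantees that the right-hand side exists for every permutation of the differentiations and agrees with $d^{(i,j)}f(x,y,w_1,\ldots,w_i,v_1,\ldots,v_j)$. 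In particular, the existence claim for $g$ follows, and I read off
\[
d^{(j,i)}g(y,x,v_1,\ldots,v_j,w_1,\ldots,w_i)=d^{(i,j)}f(x,y,w_1,\ldots,w_i,v_1,\ldots,v_j),
\]
which is continuous on $V\times U\times E_2^j\times E_1^i$ because $d^{(i,j)}f$ is continuous on $U\times V\times E_1^i\times E_2^j$ and coordinate swap is a homeomorphism. This verifies the conditions of Definition \ref{crs-map} for $g$.

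For the general case of locally convex $U,V$ with dense interior, I would apply the open-set case to $f|_{U^{0}\times V^{0}}$, obtaining that $g|_{V^{0}\times U^{0}}$ is $C^{s,r}$ with $d^{(j,i)}(g|_{V^{0}\times U^{0}})$ given by the formula above, but restricted to $(y,x)\in V^{0}\times U^{0}$. By hypothesis $d^{(i,j)}f$ possesses a continuous extension to all of $U\times V\times E_1^i\times E_2^j$; composing with the coordinate swap produces a continuous extension of $d^{(j,i)}(g|_{V^{0}\times U^{0}})$ to $V\times U\times E_2^j\times E_1^i$, which is exactly what Definition \ref{crs1-map} asks for. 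Uniqueness of the extension is automatic by density of $V^{0}\times U^{0}$ in $V\times U$.

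I do not expect a genuine obstacle: the proof is essentially a bookkeeping exercise, and the only non-trivial input, namely the commutativity of the iterated directional derivatives across the two factors, has already been supplied by Theorem \ref{schwarz}. The mildest care is needed in matching up the indices $(i,j)\leftrightarrow(j,i)$ and the corresponding vector tuples; writing the formula once at the outset, as above, keeps this straightforward.
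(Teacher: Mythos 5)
Your argument is correct and is exactly the derivation the paper intends: the corollary is stated without proof immediately after Schwarz' Theorem \ref{schwarz} and Remark \ref{sdint}, precisely because swapping the factors turns the iterated derivatives of $g$ into a permutation of those of $f$, which Theorem \ref{schwarz} (resp.\ the Remark, for dense-interior domains) identifies with $d^{(i,j)}f$. Your bookkeeping of the index swap and the continuity via the coordinate-permutation homeomorphism is accurate, so nothing is missing.
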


\begin{lem} \label{lam-rs}
Let $E_1$, $E_2$ and $F$ be locally convex spaces, 
$U$ and $V$ be locally convex subsets with dense interior of $E_1$ and $E_2$ respectively. If $f\colon  U \times V \rightarrow F $ is $C^{r,s}$ and $\lambda\colon  F \rightarrow H$ is a continuous linear map to a locally convex space $H,$ then $\lambda \circ f $ is $C^{r,s}$ and $d^{(i,j)}(\lambda \circ f)= \lambda \circ d^{(i,j)}f.$
\end{lem}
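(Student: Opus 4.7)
The plan is to exploit the fact that a continuous linear map commutes both with limits and with integrals, so it passes through every directional derivative. First I will handle the case where $U$ and $V$ are open, and then reduce the general case to this one via the density of $U^\circ \times V^\circ$.

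\textbf{Open case.} For $U \subseteq E_1$ and $V \subseteq E_2$ open, I will proceed by induction on $i+j$, using Lemma \ref{fx-dij} to characterize the $C^{r,s}$-property. For any $(x,y)\in U\times V$ and $h\in E_1\oplus E_2$, continuity and linearity of $\lambda$ give
\[
  D_h(\lambda\circ f)(x,y) \;=\; \lim_{t\to 0}\, t^{-1}\bigl(\lambda f((x,y)+th) - \lambda f(x,y)\bigr) \;=\; \lambda\bigl(D_h f(x,y)\bigr),
\]
whenever $D_h f(x,y)$ exists. Iterating this identity along the directions $(0,v_1),\ldots,(0,v_j),(w_1,0),\ldots,(w_i,0)$, I conclude that all the required iterated directional derivatives of $\lambda\circ f$ exist, and
\[
  d^{(i,j)}(\lambda\circ f)(x,y,w_1,\ldots,w_i,v_1,\ldots,v_j) \;=\; \lambda\bigl(d^{(i,j)}f(x,y,w_1,\ldots,w_i,v_1,\ldots,v_j)\bigr).
\]
Continuity of the right-hand side as a function on $U\times V\times E_1^i\times E_2^j$ follows immediately from continuity of $d^{(i,j)}f$ and of $\lambda$, so $\lambda\circ f$ is $C^{r,s}$ with the asserted formula.

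\textbf{General case.} Now let $U$ and $V$ be merely locally convex with dense interior. Applying the open case to $f|_{U^\circ\times V^\circ}$, the map $\lambda\circ f|_{U^\circ\times V^\circ}$ is $C^{r,s}$ and
\[
  d^{(i,j)}\bigl(\lambda\circ f|_{U^\circ\times V^\circ}\bigr) \;=\; \lambda\circ d^{(i,j)}\bigl(f|_{U^\circ\times V^\circ}\bigr)
\]
on $U^\circ\times V^\circ\times E_1^i\times E_2^j$ for all admissible $i,j$. Because $f$ is $C^{r,s}$ on $U\times V$, the right-hand side extends continuously to $U\times V\times E_1^i\times E_2^j$ via $\lambda\circ d^{(i,j)}f$. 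Hence the continuous extension required in Definition \ref{crs1-map} exists for $\lambda\circ f$, namely $d^{(i,j)}(\lambda\circ f) = \lambda\circ d^{(i,j)}f$, and uniqueness of the extension (which is guaranteed because $U^\circ\times V^\circ$ is dense in $U\times V$ and $H$ is Hausdorff) yields the stated formula.

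\textbf{Expected obstacle.} None of the steps is genuinely hard; the only bookkeeping point is to make sure that, in the induction for the open case, one may legitimately pull $\lambda$ through each successive limit, which is precisely where continuity of $\lambda$ is used. The reduction from the general to the open case is routine given Definition \ref{crs1-map} and the uniqueness of continuous extensions from a dense subset into a Hausdorff space.
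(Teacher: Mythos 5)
Your proposal is correct and takes essentially the same route as the paper, which simply notes that directional derivatives and continuous linear maps can be interchanged; your write-up just makes explicit the iteration through the limits in the open case and the passage to the continuous extension $\lambda\circ d^{(i,j)}f$ on all of $U\times V\times E_1^i\times E_2^j$ in the general case. No gaps.
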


\begin{proof}
Follows from the fact that directional derivatives and continuous linear maps can be interchanged.
\end{proof}

\begin{lem}[Mappings to products] \label{lem: mp:prod} Let $E_1, E_2$ be  locally convex spaces, $U$ and $V$ be locally convex subsets with dense interior of $E_1$ and $E_2$ respectively, and $(F_\alpha)_{\alpha \in A}$ be a family of locally convex spaces with direct product $F:= \prod_{\alpha \in A }F_{\alpha}$ and the projections $\pi_\alpha \colon  F \rightarrow F_\alpha$ onto the components.
Let $r,s \in \N_0 \cup \{\infty \}$ and $f\colon U \times V \rightarrow F$ be a map. Then $f$ is $C^{r,s}$ if and only if all of its components $f_\alpha \coloneq = \pi_\alpha \circ f$ are $C^{r,s}$. In this case
\begin{equation} \label{eq:mip}
d^{(i,j)}f =(d^{(i,j)}f_\alpha)_{\alpha\in A}.
\end{equation}
for all $i,j \in \N_0$ such that $i \leq r$ and $ j \leq s.$

\end{lem}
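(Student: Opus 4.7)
The plan is to prove both directions by exploiting the universal property of the product topology on $F$, combined with Lemma \ref{lam-rs} for the forward direction.

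For the ``only if'' direction, the argument collapses to a single line: each projection $\pi_\alpha \colon F \to F_\alpha$ is continuous and linear, so Lemma \ref{lam-rs} immediately gives that $f_\alpha = \pi_\alpha \circ f$ is $C^{r,s}$ with $d^{(i,j)} f_\alpha = \pi_\alpha \circ d^{(i,j)} f$. Assembling these over $\alpha \in A$ yields the formula \eqref{eq:mip}.

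For the ``if'' direction, I would first treat the case where $U$ and $V$ are open. The key observation is that convergence in $F = \prod_\alpha F_\alpha$ is coordinatewise, so for $(x,y) \in U \times V$ and a direction $w \in E_1$, the difference quotient $t^{-1}\bigl(f(x+tw,y) - f(x,y)\bigr)$ converges in $F$ as $t \to 0$ if and only if each projected difference quotient converges in $F_\alpha$, and the limit is then the tuple of projected limits; the analogous statement holds for directions in $E_2$. An induction on $i+j$ therefore produces all iterated directional derivatives required in Definition \ref{crs-map}, with $\pi_\alpha \circ d^{(i,j)} f = d^{(i,j)} f_\alpha$ at every stage. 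Continuity of $d^{(i,j)} f$ into $F$ follows from the other half of the universal property: a map into a product is continuous iff each of its components is, and by hypothesis each $d^{(i,j)} f_\alpha$ is continuous.

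For the general case where $U$ and $V$ are only locally convex with dense interior, I would apply the open-case result on $U^\circ \times V^\circ$ to conclude that $f|_{U^\circ \times V^\circ}$ is $C^{r,s}$ with components as above, and then extend: each $d^{(i,j)} f_\alpha$ possesses by hypothesis a continuous extension to $U \times V \times E_1^i \times E_2^j$, and the tuple of these extensions furnishes a continuous extension of $d^{(i,j)}(f|_{U^\circ \times V^\circ})$ into $F$, once more by the product-topology characterization of continuity. No step here is deep; the whole argument is an exercise in the functoriality of $C^{r,s}$-calculus under the family of projections $\pi_\alpha$. The only point requiring minor care is the induction in the open case, where one uses both halves of the universal property at every step, so that existence and continuity of $d^{(i,j)} f$ feed correctly into the next stage.
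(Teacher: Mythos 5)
Your proposal is correct and follows essentially the same route as the paper's proof: Lemma \ref{lam-rs} for the forward direction, and for the converse the component-wise formation of limits in the product together with the fact that a map into $\prod_\alpha F_\alpha$ is continuous iff all its components are, applied first on $U^\circ\times V^\circ$ and then to the tuple of continuous extensions. Your write-up merely makes the induction on the order of differentiation and the boundary-extension step more explicit than the paper does.
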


\begin{proof}
$\pi_\alpha$ is continuous linear. Hence if $f$ is $C^{r,s}$, then $f_\alpha= \pi_\alpha \circ f$ is $C^{r,s}$, by Lemma \ref{lam-rs}, with $d^{(i,j)}f_\alpha =\pi_\alpha \circ d^{(i,j)}f.$
Hence (\ref{eq:mip}) holds.

Conversely, assume that each $f_{\alpha}$ is $C^{r,s}.$ Because the limits in products can be formed component-wise, we see that 
$$d^{(i,j)}f(x,y,w_1,\ldots,w_i,v_1,\ldots,v_j)=D_{(w_i,0)}\cdots D_{(w_1,0)}D_{(0,v_j)}\cdots D_{(0,v_1)}f(x,y)$$
exists for all $(x,y) \in U^0 \times V^0$ and $w_1,\ldots,w_i \in E_1,\quad v_1, \ldots,v_j \in E_2,$ and is given by 
\begin{equation} \label{eq:dfa}
(d^{(i,j)}f_\alpha(x,y,w_1,\ldots, w_i, v_1,\ldots, v_j))_{\alpha \in A}.
\end{equation}

Now (\ref{eq:dfa}) defines a continuous function $ U \times V \times E^i_1 \times E^j_2 \rightarrow F $
for all $i,j \in \N_0$ such that $i \leq r$ and $j \leq s.$ Hence $f$ is $C^{r,s}.$
\end{proof}

\begin{lem} \label{rn-n1}
Let $r,s \in \N_0 \cup \{ \infty \},\, s\geq 1$, $E_1, E_2,F$ be  locally convex spaces, $U$ and $V$ be locally convex subsets with dense interior of $E_1$ and $E_2$ respectively. Let $f\colon U \times V \to F$ be a map. Then $f$ is $C^{r,s}$ if and only if $f$ is $C^{r,0},\; f$ is $C^{0,1}$ and $d^{(0,1)}f\colon U\times (V \times E_2 )\rightarrow F$ is $C^{r,s-1}.$  
\end{lem}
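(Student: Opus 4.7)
The plan is to apply Lemma \ref{fx-dij} in both directions, translating $C^{r,s}$-ness of $f$ on $U\times V$ into $C^{r,s-1}$-ness of $g\coloneq d^{(0,1)}f$ on $U\times (V\times E_2)$. The pivot identity is that $f\in C^{0,1}$ forces $f_x\in C^1$ with $df_x(y,v)=d^{(0,1)}f(x,y,v)$, so slicewise $g_x=df_x$.

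\emph{Necessity.} If $f\in C^{r,s}$, restricting $i=0$ or $j=0$ in Definition \ref{crs1-map} yields $f\in C^{r,0}$ and $f\in C^{0,1}$. To establish $g\in C^{r,s-1}$ via Lemma \ref{fx-dij}: (a) $g_x=df_x$ is $C^{s-1}$ since $f_x$ is $C^s$; (b,c) iterated $(E_2\times E_2)$-derivatives of $g_x$ expand, via the rule on partial differentials and the linearity of $df_x$ in $v$, into a finite Leibniz-type sum whose summands each have the form $d^{(k+1)}f_x(y,\dots)$ with $k+1\leq j'+1\leq s$. Each such summand is $C^r$ in $x$ and jointly continuous by Lemma \ref{fx-dij}(b,c) applied to $f$, so conditions (b) and (c) for $g$ follow by summation.

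\emph{Sufficiency.} Assume the three conditions. I verify Lemma \ref{fx-dij} for $f$. For (a): $f_x$ is $C^1$ with $df_x=g_x$; since $g\in C^{r,s-1}$ gives $g_x\in C^{s-1}$, the characterization recalled in the preliminaries (``$C^{k+1}\Leftrightarrow C^1$ with $C^k$ differential'') yields $f_x\in C^s$. For (b) and (c), the key identity is
\[
 d^{(j)}f_x(y,v_1,\dots,v_j)=d^{(j-1)}g_x\bigl(y,v_1,(v_2,0),\dots,(v_j,0)\bigr)\qquad (1\leq j\leq s),
\]
obtained by unwinding $d^{(j)}f_x=D_{v_j}\cdots D_{v_2}(df_x(\cdot,v_1))$ and noting that each $D_{v_k}$ with $k\geq 2$ acts on the $V$-slot of $g_x$ while $v_1$ sits in the $E_2$-slot of the base point. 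Then Lemma \ref{fx-dij}(b,c) applied to $g$ gives that $x\mapsto d^{(j)}f_x(y,\vec v)$ is $C^r$ and $d^{(i,j)}f$ is jointly continuous; the $j=0$ case is delivered by $f\in C^{r,0}$.

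\emph{Main obstacle.} All genuine analysis has already been packaged into Lemma \ref{fx-dij}; what remains is bookkeeping. The one step that requires care is the necessity direction, where expanding $D_{\eta_{j'}}\cdots D_{\eta_1}df_x(y,v)$ for $\eta_k=(\tilde v_k,e_k)\in E_2\times E_2$ produces a Leibniz-style sum in which each factor either raises the derivative order of $f_x$ (for the $(\tilde v_k,0)$-part) or substitutes $e_k$ for $v$ and then kills further $v$-derivatives of that branch (for the $(0,e_k)$-part). Every summand that arises is a derivative of $f$ of order at most $(r,s)$ already controlled by hypothesis, so the verification of (b) and (c) reduces to termwise continuity and $C^r$-dependence on $x$. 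The sufficiency direction is easier, amounting to a single rearrangement of arguments.
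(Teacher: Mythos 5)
Your proposal is correct, but the necessity direction takes a genuinely different route from the paper. The paper proves only the implication ``$\Leftarrow$'' at this point (essentially your sufficiency argument, phrased with Definition \ref{crs1-map} rather than slicewise through Lemma \ref{fx-dij}: the iterated derivative $D_{(w_i,0)}\cdots D_{(0,v_1)}f$ is rewritten as an iterated derivative of $d^{(0,1)}f$ in the directions $(w_k,(0,0))$ and $(0,(v_k,0))$), and it explicitly defers ``$\Rightarrow$'' until after Lemma \ref{rs-hh}. There, $d^{(0,1)}f$ is observed to be linear in the $E_2$-slot with all derivatives in the $U$- and $V$-directions controlled by $d^{(i,j+1)}f$, and Lemma \ref{rs-hh} (the statement that such a partially multilinear map is automatically $C^{r,s-1}$ on the combined domain $U\times(V\times E_2)$) does the rest. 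This ordering is forced in the paper because the proof of Lemma \ref{rs-hh} itself uses the ``$\Leftarrow$'' half of Lemma \ref{rn-n1}. Your necessity argument instead verifies conditions (a)--(c) of Lemma \ref{fx-dij} for $g=d^{(0,1)}f$ by hand, expanding $D_{(\tilde v_{j'},e_{j'})}\cdots D_{(\tilde v_1,e_1)}g_x(y,v)$ via the rule on partial differentials and linearity in $v$ into $d^{(j'+1)}f_x(y,v,\tilde v_1,\ldots,\tilde v_{j'})+\sum_m d^{(j')}f_x(y,e_m,\tilde v_1,\ldots,\widehat{\tilde v_m},\ldots,\tilde v_{j'})$, each summand being controlled by Lemma \ref{fx-dij} applied to $f$. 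This is sound and has the advantage of being self-contained (it depends only on Lemma \ref{fx-dij}, so no deferral and no apparent circularity), at the cost of the combinatorial bookkeeping that the paper packages once and for all into the more general multilinearity lemma \ref{rs-hh} (which is then reused elsewhere, e.g.\ for Proposition \ref{eval-rk}). The one step you should write out carefully is that the Leibniz expansion is first established on $U^0\times V^0\times E_2$ and then extended to the boundary by continuity of both sides, since the rule on partial differentials is applied pointwise on the open part.
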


\begin{proof}
The implication $``\Rightarrow"$ will be established after Lemma \ref{rs-hh}, and shall not be used before. To prove $``\Leftarrow",$ let $i,j \in \N_0$ such that $i \leq r$ and $ j \leq s,$ and $(x,y) \in U^0 \times V^0$ and $w_1, \ldots, w_i \in E_1$ and $v_1, \ldots, v_j \in E_2.$
\newline
\textit{If $j=0$}, then $D_{(w_i,0)} \cdots D_{(w_1,0)}f(x,y)$ exists as $f$ is $C^{r,0},$ and is given by 
$$d^{(i,0)}f(x,y,w_1,\ldots,w_i)$$
 which extends continuously to $U \times V \times E^{i}_1.$
\newline
\textit{If $j>0$}, then $D_{(0,v_1)}f(x,y)= d^{(0,1)}f(x,y,v_1)$ exists because $f$ is $C^{0,1}$ and because this function is $C^{r,s-1},$ also the directional derivatives
$$D_{(w_i,0)} \cdots D_{(w_1,0)}D_{(0,v_j)} \cdots D_{(0,v_1)}f(x,y)$$
$$=D_{(w_i,(0,0))} \cdots D_{(w_1,(0,0))}D_{(0,(v_j,0))} \cdots D_{(0,(v_2,0))}(d^{(0,1)}f)(x,y,v_1)$$ 
exist and the right-hand side extends continuously to $(x,y,w_1,\ldots,w_i,v_1,\ldots,v_j)\in U\times V \times E^i_1 \times E^j_2.$ Hence $f$ is $C^{r,s}.$    
\end{proof} 

\begin{lem} \label{rs-la}
Let $r,s \in \N_0 \cup \{ \infty \}$, $E_1, E_2,H_1,H_2, F$ be locally convex spaces, $U,V,P$ and $Q$ be locally convex subsets with dense interior of $E_1,E_2, H_1$ and $H_2,$ respectively. If $ f\colon  U \times V  \rightarrow F $ is a $C^{r,s}$-map and $\lambda_1\colon H_1 \rightarrow E_1$ as well as $\lambda_2\colon H_2 \rightarrow E_2$ are continuous linear maps such that $\lambda_1(P) \subseteq U$ and $ \lambda_2(Q)\subseteq V$, then $ f \circ (\lambda_1 \times \lambda_2)\lvert_{P\times Q}\colon  P \times Q \rightarrow F$ is $C^{r,s}$.
\end{lem}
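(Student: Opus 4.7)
My plan is to verify that $g := f \circ (\lambda_1 \times \lambda_2)|_{P \times Q}$ satisfies the three conditions (a)--(c) of Lemma \ref{fx-dij}, exploiting the fact that continuous linear maps are smooth with derivative equal to themselves (so the chain rule collapses to a clean substitution). The existence of the formulas on the open interiors will be combined with the continuous extensions of $d^{(i,j)}f$ to pass to the boundary.

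First, for condition (a), fix $p \in P$. Then $g_p := g(p,\bullet)$ factors as $f_{\lambda_1(p)} \circ \lambda_2|_Q$. By Lemma \ref{fx-dij}(a) applied to $f$, the map $f_{\lambda_1(p)}\colon V \to F$ is $C^s$; composing with the continuous-linear (hence smooth) map $\lambda_2|_Q\colon Q \to V$ via the ordinary chain rule yields that $g_p$ is $C^s$, with
\[
d^{(j)}g_p(q,v_1,\ldots,v_j) = d^{(0,j)}f\bigl(\lambda_1(p),\lambda_2(q),\lambda_2(v_1),\ldots,\lambda_2(v_j)\bigr).
\]
For condition (b), fix $q\in Q$, $j\leq s$ and $v_1,\ldots,v_j\in H_2$. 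Then $p\mapsto d^{(j)}g_\bullet(q,v_1,\ldots,v_j)$ equals $\phi\circ \lambda_1|_P$, where $\phi(x):=d^{(0,j)}f(x,\lambda_2(q),\lambda_2(v_1),\ldots,\lambda_2(v_j))$ is $C^r$ on $U$ by Lemma \ref{fx-dij}(b). Again by the chain rule, this composition is $C^r$ on $P$; iterating gives
\[
d^{(i,j)}g(p,q,w_1,\ldots,w_i,v_1,\ldots,v_j) = d^{(i,j)}f\bigl(\lambda_1(p),\lambda_2(q),\lambda_1(w_1),\ldots,\lambda_1(w_i),\lambda_2(v_1),\ldots,\lambda_2(v_j)\bigr)
\]
for $i\leq r$, $j\leq s$ and $(p,q,w,v)\in P^0\times Q^0\times H_1^i\times H_2^j$. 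The right-hand side is continuous on all of $P\times Q\times H_1^i\times H_2^j$, since $d^{(i,j)}f$ is continuous on $U\times V\times E_1^i\times E_2^j$ and $\lambda_1,\lambda_2$ are continuous. This supplies both the continuous extension required by Definition \ref{crs1-map} and condition (c) of Lemma \ref{fx-dij}, so $g$ is $C^{r,s}$.

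The main obstacle will be justifying the chain rule when $\lambda_1(P^0)\not\subseteq U^0$ (or the analogue for $\lambda_2$), because the open-set chain rule cannot be applied naively at points $p_0\in P^0$ with $\lambda_1(p_0)\in U\setminus U^0$. I would handle this exactly as in Step~2 of the proof of Lemma \ref{fx-dij}: pick a relatively open convex neighbourhood $W\subseteq U$ of $\lambda_1(p_0)$ and some $z\in W\cap U^0$, so that $\lambda_1(p_0)+\tau(z-\lambda_1(p_0))\in U^0$ for $\tau\in(0,1]$; compute the relevant difference quotients with $\lambda_1(p_0)$ replaced by this interpolated point (where the classical chain rule applies), and then pass $\tau\to 0$ using the continuity of the extended derivative $d^{(i,j)}f$ on all of $U\times V\times E_1^i\times E_2^j$ to recover the required directional derivatives of $g$ at $(p_0,q_0)$ and the chain-rule identity displayed above.
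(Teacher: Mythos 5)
Your proof is correct and follows essentially the same route as the paper's: both reduce to the identity $d^{(i,j)}(f\circ(\lambda_1\times\lambda_2))(p,q,w,v)=d^{(i,j)}f(\lambda_1(p),\lambda_2(q),\lambda_1(w_1),\ldots,\lambda_1(w_i),\lambda_2(v_1),\ldots,\lambda_2(v_j))$ on $P^0\times Q^0$ (exploiting linearity of $\lambda_1,\lambda_2$ to turn difference quotients of the composite into difference quotients of $f$, which you package via Lemma \ref{fx-dij} and the ordinary chain rule) and then invoke continuity of the right-hand side on all of $P\times Q\times H_1^i\times H_2^j$. Your explicit handling of the case where $\lambda_1(P^0)$ or $\lambda_2(Q^0)$ meets the boundary of $U$ or $V$, via the interpolation argument from Step~2 of Lemma \ref{fx-dij}, addresses a point the paper's own proof passes over silently, so it is a welcome refinement rather than a deviation.
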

\begin{proof}
For $(q,p)\in P^0 \times Q^0$ and $w_1,\ldots,w_i \in H_1, \; v_1,\ldots,v_j \in H_2,$ we have
\begin{align*}
&D_{(0,v_1)}(f \circ (\lambda_1 \times \lambda_2))(q,p) = \lim_{t \rightarrow 0} \frac{1}{t}(f(\lambda_1(p),\lambda_2(q+tv_1)-f(\lambda_1(p),\lambda_2(q)))\\
&=\frac{1}{t}(f(\lambda_1(p),\lambda_2(q)+t\lambda_2( v_1))-f(\lambda_1(p),\lambda_2(q)))                                                       =(D_{(0,\lambda_2(v_1))}f)(\lambda_1(p),\lambda_2(q))
\end{align*}
and recursively
$$ D_{(w_i,0)}\cdots D_{(w_1,0)}D_{(0,v_j)}\cdots D_{(0,v_1)}(f\circ(\lambda_1 \times \lambda_2))(p,q)$$
$$=d^{(i,j)}f(\lambda_1(p),\lambda_2(q),\lambda_1(w_1),\ldots,\lambda_1(w_i),\lambda_2(v_1),\ldots,\lambda_2(v_j)).$$
The right-hand side defines a continuous function of $(p,q,w_1, \ldots,w_i,v_1,\ldots,v_j)\in$\\$ P \times Q \times H^i_1 \times H^j_2.$ Hence the assertion follows.
\end{proof}

\begin{lem} \label{rs-hh}
Let $r,s \in \N_0 \cup \{ \infty \}$, $E_1, E_2,H_1, \ldots, H_n, F$ be  locally convex spaces, $U$ and $V$ be locally convex subsets with dense interior of $E_1$ and $E_2,$ respectively, and 
$$ f\colon  U \times V \times H_1 \times \cdots \times H_n \rightarrow F $$
be a map with the following properties:

\begin{compactenum}
\item $f(x,y,\bullet)\colon  H_1 \times \cdots \times H_n \rightarrow F$ is $n$-linear for all $x\in U, \; y \in V;$
\item The directional derivatives $D_{(w_i,0,0)} \cdots D_{(w_1,0,0)}D_{(0,v_j,0)} \cdots D_{(0,v_1,0)}f(x,y,h)$ exist for all $ i,j  \in \N_0$ such that $i \leq r, j \leq s$, $(x,y) \in U^0\times V^0$, $h \in H_1 \times \cdots \times H_n$ and
 $w_1,\ldots,w_i \in E_1, \; v_1,\ldots,v_j \in E_2,$ and extend continuously to functions 
$$  U \times V \times H_1 \times \cdots \times H_n \times E^{i}_{1} \times E^{j}_{2} \rightarrow F.$$
Then $f\colon  U \times (V \times H_1 \times \cdots \times H_n) \rightarrow F$
is $C^{r,s}.$ Also 
$g\colon  (U  \times H_1 \times \cdots \times H_n ) \times V \rightarrow F, \, ((x,h),y)\mapsto f(x,y,h)$
is $C^{r,s}.$
 \end{compactenum}
\end{lem}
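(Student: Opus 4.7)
The plan is to verify Definition \ref{crs1-map} directly for $f\colon U\times(V\times H_1\times\cdots\times H_n)\to F$. Since each $H_l$ is open in itself, the interior of the second factor is $V^0\times H_1\times\cdots\times H_n$, so on this interior hypothesis (b) already supplies the iterated derivatives taken with $h$ held fixed. The key observation is that the $n$-linearity of $f(x,y,\bullet)$ in its $H$-arguments turns directional derivatives in the $H$-direction into purely algebraic substitutions, reducing general iterated $(0,v,k)$-derivatives to iterated $(0,v,0)$-derivatives evaluated at ``modified'' tuples.

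First I would derive the basic formula. For $(v,k)\in E_2\times H_1\times\cdots\times H_n$, expanding $f(x,y+tv,h+tk)$ by multilinearity in the last $n$ slots and noting that all terms of order $t^m$ with $m\geq 2$ vanish after dividing by $t$ and letting $t\to 0$, one obtains
\begin{equation*}
D_{(0,v,k)}f(x,y,h) \;=\; d^{(0,1)}f(x,y,h,v) \;+\; \sum_{l=1}^n f(x,y,h_1,\ldots,h_{l-1},k_l,h_{l+1},\ldots,h_n).
\end{equation*}
Each summand is continuous by hypothesis (b) (applied with $i=j=0$ and with $i=0,j=1$) and can itself be written in the form $d^{(0,j')}f(x,y,\tilde h, v_{\textup{subset}})$, where $\tilde h$ arises from $h$ by substituting certain entries with $k$-entries; crucially, each such expression is still multilinear in the free (unsubstituted) $h$-coordinates, so the same identity can be applied to it.

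Iterating this formula by induction on $j$ shows that, on $U^0\times V^0\times H_1\times\cdots\times H_n$, the derivative $D_{(0,v_j,k^{(j)})}\cdots D_{(0,v_1,k^{(1)})}f(x,y,h)$ exists for every $j\leq s$ and equals a finite sum of terms of the form $d^{(0,j')}f(x,y,\tilde h,v_{l_1},\ldots,v_{l_{j'}})$ with $j'\leq j$. Applying $D_{(w_i,0,0)}\cdots D_{(w_1,0,0)}$ termwise, hypothesis (b) produces each $d^{(i,j')}f(x,y,\tilde h,w_1,\ldots,w_i,v_{l_1},\ldots,v_{l_{j'}})$ together with a continuous extension to $U\times V\times H_1\times\cdots\times H_n\times E_1^i\times E_2^{j'}$. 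Summing yields a continuous extension of $d^{(i,j)}f$ to $U\times V\times H_1\times\cdots\times H_n\times E_1^i\times(E_2\times H_1\times\cdots\times H_n)^j$, proving that $f$ is $C^{r,s}$ on $U\times(V\times H_1\times\cdots\times H_n)$.

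For the second statement, the argument is symmetric: hypothesis (b) treats the $(w_m,0,0)$- and $(0,v_l,0)$-directions on an equal footing, so repeating the same multilinear-expansion computation with the $H$-coordinates now bundled with the $U$-factor shows that $g\colon(U\times H_1\times\cdots\times H_n)\times V\to F$ is also $C^{r,s}$. The main obstacle I expect is the combinatorial bookkeeping in the inductive step --- tracking which $h_p$ has been replaced by which $k^{(m)}_p$ as the differentiation proceeds, and verifying that the ``modified-$h$'' expressions together with their residual multilinearity are preserved under further differentiation so that only the two types of terms displayed above ever appear. Continuity of the final finite sum is immediate from hypothesis (b), so no further analytic input is needed.
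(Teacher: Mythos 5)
Your plan is correct, but it follows a genuinely different route from the paper. Both proofs start from the same first-order identity
\begin{equation*}
D_{(0,v,k)}f(x,y,h)=d^{(0,1)}f(x,y,h,v)+\sum_{l=1}^n f(x,y,h_1,\ldots,h_{l-1},k_l,h_{l+1},\ldots,h_n),
\end{equation*}
but from there the paper does \emph{not} iterate it directly. Instead it argues structurally: it reduces the statement about $g$ to the one about $f$ via Corollary \ref{rs-sr}, proves the $f$-statement by induction on $s$ with base case given by hypothesis (b), and in the inductive step shows that $d^{(0,1)}f$ is $C^{r,s-1}$ by recognizing each summand of the identity as an instance of the lemma with $s-1$ in place of $s$ --- the substitution terms via an auxiliary multilinear map $\psi$ on an enlarged factor $H_1\times\cdots\times H_{n-1}\times(H_n\times E_2\times H)$ pulled back along continuous linear maps (Lemma \ref{rs-la}), and the term $D_{(0,v,0)}f$ as an $(n{+}1)$-linear instance --- then invokes the already-proved direction of Lemma \ref{rn-n1} to conclude $C^{r,s}$. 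Your approach instead computes every iterated directional derivative explicitly, obtaining $d^{(i,j)}f$ (for the grouping $U\times(V\times H)$) as a finite sum of pullbacks of the maps $d^{(i,j')}f$ from hypothesis (b) along continuous substitution maps; continuity and the continuous extension to $U\times V$ are then read off termwise. This is self-contained (it bypasses Lemmas \ref{rn-n1} and \ref{rs-la} and the nested induction) and makes the structure of the derivative transparent, at the price of the combinatorial bookkeeping you flag. One point you should make explicit when writing it up: the induction needs that $d^{(0,j')}f(x,y,\cdot,v_1,\ldots,v_{j'})$ (and, for the $g$-statement, $d^{(i',j)}f(x,y,\cdot,w_\ast,v_\ast)$) is again $n$-linear in the $H$-slot; this holds because these maps are pointwise limits of linear combinations of $f(x',y',\cdot)$ with $h$ held fixed, and it is exactly what licenses reapplying the expansion identity to the substituted terms. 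Your one-line treatment of $g$ by symmetry is also sound, since in both groupings the inner derivatives prescribed by Definition \ref{crs-map} are the $V$-directions, matching the order of differentiation in hypothesis (b); the paper instead derives the $g$-statement formally from the $f$-statement by transposing factors.
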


\begin{proof}
Holding $h \in H:= H_1 \times \cdots \times H_n$ fixed, the map $f(\bullet,h)$ is $C^{r,s}$ and hence 
$$ \varphi\colon  V \times U \rightarrow F, \; (x,y) \mapsto f(y,x,h)$$
is $C^{s,r}$, by Corollary \ref{rs-sr}, with 
$$ D_{(0,v_j)} \cdots D_{(0,v_1)} D_{(w_i,0)} \cdots D_{(w_1,0)}\varphi(x,y)$$

$$=D_{(w_i,0)} \cdots D_{(w_1,0)} D_{(0,v_j)} \cdots D_{(0,v_1)}f(y,x,h)$$
Hence $f_1\colon V\times (U\times H) \to F,\, f_1(y,x,h):=f(x,y,h)$ satisfies hypotheses analogues to those for $f$ (with $r$ and $s$ interchanged) and will be $C^{s,r}$ if the first assertion holds. Using Corollary \ref{rs-sr}, this implies that $g$ is $C^{r,s}.$ Hence we only need to prove the first assertion.

We may assume that $r,s < \infty;$ the proof is by induction on $s$. 
\newline
\textit{The case $s=0$}. Then $f$ is $C^{r,0}$ by the hypotheses.
\newline
\textit{Induction step}. Let $v \in E_2, z=(z_1, \ldots,z_n) \in H.$ By hypothesis, $D_{(0,v,0)}f(x,y,h)$ exists and extends to a continuous map on $U \times V \times E_2 \times H \rightarrow F.$ Because $f(x,y,\bullet)\colon  H \rightarrow F$ is continuous and $n$-linear, it is $C^1$ with
\begin{equation*} 
D_{(0,0,z)}f(x,y,h)= \sum_{k=1}^n f(x,y,h_1,\ldots,h_{k-1},z_k,h_{k+1},\ldots,h_n).
\end{equation*}
This formula defines a continuous function $U \times V \times H \times H \rightarrow F.$
Holding $x \in U$ fixed, we deduce with the Rule on Partial Differentials (Definition \ref{defn: deriv}) that the map 
$$ V \times H \rightarrow F, (x,h) \mapsto f(x,y,h)  $$
is $C^1,$ with 
\begin{equation} \label{d-sum}
D_{(0,v,z)}f(x,y,h)=D_{(0,v,0)}f(x,y,h)+\sum_{k=1}^{n} f(x,y,h_1,\ldots,h_{k-1},z_k,h_{k+1},\ldots,h_n).
\end{equation}

Now $f\colon U \times (V \times  H)\rightarrow F$ is $C^{r,0}$ (see the case $s=0$). Also, $f\colon  U \times ( V \times H ) \rightarrow F $ is $C^{0,1},$  because we have just seen that  $ d^{(0,1)}f(x,(y,h),(v,z))$  exists and is given by (\ref{d-sum}), which extends continuously to $ U \times ( V \times H) \times (E_2 \times H).$

We claim that $d^{(0,1)}f\colon  U \times (( V \times H) \times (E_2 \times H))$ is $C^{r,s-1}.$ If this is true, then $f$ is $C^{r,s},$ by Lemma \ref{rn-n1}. To prove the claim, for fixed $k \in \{ 1,\ldots,n \},$ consider 
$$ \phi \colon  U \times ( V \times H \times E_2 \times H) \rightarrow F , \; (x,y,h,v,z)\mapsto f(x,y,h_1,\ldots,h_{k-1},z_k,h_{k+1},\ldots,h_n).$$ 
The map
$$ \psi \colon U \times V \times H_1 \times \cdots \times H_{n-1} \times (H_n\times E_2 \times H ) \rightarrow F,$$
$$ (x,y,h_1,\ldots,h_{n-1},(h_n,v,z)) \mapsto f(x,y,h_1,\ldots,h_n)$$
is $n$-linear in $(h_1,\ldots,h_{n-1},(h_n,v,z)).$ By induction, $\psi$ is $C^{r,s-1}$ as a map on $U \times(V\times H_1 \times \cdots \times H_{n-1} \times H_n \times E_2\times H)$. By Lemma \ref{rs-la}, also $\phi$ is $C^{r,s-1}$. Hence each of the final $k$ summands in \eqref{d-sum} is $C^{r,s-1}$ in $(x,(y,h,v,z)).$ It remains to observe that $\theta\colon U \times V \times (H\times E_2) \to F, \, (x,y,h,v) \mapsto  D_{(0,v,0)}f(x,y,h)$ is $(n+1)$-linear in the final argument and satisfies hypotheses analogous to those of $f$, with $(r,s)$ replaced by $(r,s-1).$ Hence $\theta\colon U \times V \times (H\times E_2) \to F$ is $C^{r,s-1},$ by induction. As a consequence, $d^{(0,1)}f$ is $C^{r,s-1}$ (like each of the summands in \eqref{d-sum}).           
 
\end{proof} 

Taking $E_2=\{0\},$ Lemma \ref{rs-hh} readily entails:
\begin{lem} \label{rs-hh1}
Let $r \in \N_0 \cup \{ \infty \}$, $E,H_1, \ldots, H_n, F$ locally convex spaces, $U$ be a locally convex subsets with dense interior of $E$ and  $ f\colon  U \times ( H_1 \times \cdots \times H_n) \rightarrow F $ be a $C^{r,0}$-map which is $n$-linear for fixed first argument. Then $f$ is $C^{r,\infty}.$
\end{lem}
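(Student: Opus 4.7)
The plan is to deduce Lemma \ref{rs-hh1} as a direct specialisation of Lemma \ref{rs-hh}, as hinted by the authors. I would take $E_2\coloneq \{0\}$ and $V\coloneq \{0\}$ (trivially a locally convex subset with dense interior in $\{0\}$) and consider the map
$\tilde{f}\colon U\times V\times H_1\times\cdots\times H_n\to F$, $\tilde{f}(x,0,h)\coloneq f(x,h)$. Hypothesis (a) of Lemma \ref{rs-hh} is immediate: $\tilde{f}(x,0,\bullet)=f(x,\bullet)$ is $n$-linear by assumption. For hypothesis (b) with arbitrary $i\le r$ and $j\in \N_0$, every vector $(0,v_k,0)$ with $v_k\in E_2=\{0\}$ is the zero vector, and by the very definition of directional derivative $D_0 g(\cdot)=0$. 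Consequently the iterated derivative $D_{(w_i,0,0)}\cdots D_{(w_1,0,0)}D_{(0,v_j,0)}\cdots D_{(0,v_1,0)}\tilde{f}(x,0,h)$ vanishes identically whenever $j\ge 1$, and for $j=0$ reduces to $D_{(w_i,0,0)}\cdots D_{(w_1,0,0)}\tilde{f}(x,0,h)$, which exists and extends continuously precisely because $f$ is assumed $C^{r,0}$ on $U\times(H_1\times\cdots\times H_n)$.

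Applying Lemma \ref{rs-hh} with $s=\infty$ yields that $\tilde{f}\colon U\times(V\times H_1\times\cdots\times H_n)\to F$ is $C^{r,\infty}$. Under the obvious topological-linear identification $V\times H_1\times\cdots\times H_n=\{0\}\times H_1\times\cdots\times H_n\cong H_1\times\cdots\times H_n$, Lemma \ref{rs-la} then transports the conclusion back to the original map, giving that $f\colon U\times(H_1\times\cdots\times H_n)\to F$ is $C^{r,\infty}$. The argument is essentially bookkeeping; the only point requiring a moment's care is verifying that the degenerate directional derivatives in the trivial $E_2$-direction cause no harm, but this is handled automatically by $D_0(\cdot)=0$, so there is no real obstacle.
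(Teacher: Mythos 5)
Your proposal is correct and follows exactly the paper's route: the authors obtain Lemma \ref{rs-hh1} precisely by taking $E_2=\{0\}$ in Lemma \ref{rs-hh}, with the degenerate derivatives $D_0(\cdot)=0$ making hypothesis (b) automatic for $j\geq 1$, just as you observe. Your extra step invoking Lemma \ref{rs-la} to handle the identification $\{0\}\times H_1\times\cdots\times H_n\cong H_1\times\cdots\times H_n$ is harmless bookkeeping that the paper leaves implicit.
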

\noindent
{\bf Proof of Lemma \ref{rn-n1}, completed.}
If $f$ is $C^{r,s}$, then $f$ is $C^{0,1}$ and $f$ is $C^{r,0}.$ Moreover $d^{(0,1)}f\colon  U \times V \times E_2 \rightarrow F$ is linear in the $E_2$-variable and 
$$ D_{(w_i,0,0)}\cdots D_{(w_1,0,0)}D_{(0,v_j,0)} \cdots D_{(0,v_1,0)}(d^{(0,1)}f)(x,y,z)$$
$$=d^{(i,j+1)}f(x,y,w_1,\ldots,w_i,z,v_1,\ldots,v_j)$$
exists for all $i,j \in \N_0$ such that $i\leq r, \; j \leq s-1,$ if $(x,y) \in U \times V,$ and extends to a continuous function in $(x,y,z,w_1,\ldots,w_i,z,v_1,\ldots,v_j) \in U \times V \times E_1 \times E_1^i\times E_2^j.$ Hence by Lemma \ref{rs-hh}, $d^{(0,1)}f$ is $ C^{r,s-1}.$

\begin{lem}  \label{lem: C11:C1}
Let $E_1$, $E_2$ and $F$ be locally convex spaces,   
$U$ and $V$ be locally convex subsets with dense interior of $E_1$ and $E_2$ respectively, and $r \in \N_0 \cup \{\infty\}$. If $f\colon  U \times V \rightarrow F$ is $C^{r,r},$ then $f$ is $C^{r}.$
\end{lem}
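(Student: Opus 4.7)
The plan is to prove the lemma by induction on $r \in \mathbb{N}_0$; the case $r = \infty$ then follows since $C^\infty = \bigcap_{r \in \N_0} C^r$. The base case $r = 0$ is tautological: being $C^{0,0}$ amounts to continuity of $d^{(0,0)}f = f$, which is exactly $C^0$.

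For the inductive step, assume the statement holds for $r-1 \geq 0$ and let $f$ be $C^{r,r}$. Since $r \geq 1$, the partial derivatives $d^{(1,0)}f$ and $d^{(0,1)}f$ exist and are continuous, so the Rule on Partial Differentials \eqref{eq: rule:pdiff} shows that $f$ is $C^1$ with
\begin{equation*}
df(x,y,h_1,h_2) = d^{(1,0)}f(x,y,h_1) + d^{(0,1)}f(x,y,h_2).
\end{equation*}
Using the characterization that $f$ is $C^r$ iff $f$ is $C^1$ and $df$ is $C^{r-1}$ (recorded in the Remark after Definition \ref{defn: deriv}), the task reduces to showing $df \colon (U \times V) \times (E_1 \times E_2) \to F$ is $C^{r-1}$. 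By the induction hypothesis applied on the product $(U \times V) \times (E_1 \times E_2)$, it is enough to verify that $df$ is $C^{r-1,\,r-1}$ on this product.

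Writing $df = g_1 + g_2$ with $g_i$ the two summands above, linearity of directional derivatives lets me treat each summand separately; by symmetry I focus on $g_1(x,y,h_1,h_2) \coloneq d^{(1,0)}f(x,y,h_1)$. This factors as $g_1 = \tilde g_1 \circ (\id \times \pi)$, where $\tilde g_1 \coloneq d^{(1,0)}f$ is viewed as a map $(U \times V) \times E_1 \to F$ and $\pi \colon E_1 \times E_2 \to E_1$ is the projection; Lemma~\ref{rs-la} then reduces the problem to showing $\tilde g_1$ is $C^{r-1,\,r-1}$ on $(U\times V) \times E_1$. Because $\tilde g_1$ is linear in its $E_1$-argument, Lemma~\ref{rs-hh1} (applied with $n=1$) reduces this in turn to showing $\tilde g_1$ is $C^{r-1,\,0}$, i.e.\ that for each fixed $h_1 \in E_1$ the map $\phi_{h_1}(x,y) \coloneq d^{(1,0)}f(x,y,h_1)$ is $C^{r-1}$ on $U \times V$, with joint continuity of the iterated $(x,y)$-directional derivatives in all of $(x,y,h_1,w_1,\ldots,w_i,v_1,\ldots,v_j)$.

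For this last piece, Theorem~\ref{schwarz} rewrites each iterated $(x,y)$-directional derivative of $\phi_{h_1}$ as a value $d^{(i+1,\,j)}f(x,y,h_1,w_1,\ldots,w_i,v_1,\ldots,v_j)$, which exists and is continuous whenever $i+1 \leq r$ and $j \leq r$. Hence $\phi_{h_1}$ is $C^{r-1,\,r-1}$ on $U \times V$, so by the induction hypothesis $\phi_{h_1} \in C^{r-1}(U\times V, F)$; the joint continuity in $(x,y,h_1,\ldots)$ of these derivatives is inherited from continuity of the $d^{(i+1,\,j)}f$. The main obstacle I anticipate is this last step: one has to verify carefully that, after invoking Schwarz, the $(x,y)$-directional derivatives of $\tilde g_1$ really do descend to continuous functions in the full set of variables including the parameter $h_1$, so that the $C^{r-1,\,0}$-hypothesis of Lemma~\ref{rs-hh1} is established globally rather than only fibrewise in $h_1$.
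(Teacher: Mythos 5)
Your proposal is correct and follows essentially the same route as the paper: induction on $r$, the $C^1$-property via the Rule on Partial Differentials, and the reduction of ``$df$ is $C^{r-1}$'' to the $C^{r-1,r-1}$-property of the two summands $d^{(1,0)}f$ and $d^{(0,1)}f$ followed by the induction hypothesis. The only divergence is a micro-step: where you establish that these partial derivatives are $C^{r-1,r-1}$ by combining Theorem~\ref{schwarz}, a fibrewise application of the induction hypothesis and Lemma~\ref{rs-hh1}, the paper obtains the same fact more directly from Lemma~\ref{rn-n1} together with Corollary~\ref{rs-sr}.
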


\begin{proof}
The proof is by the induction on $r \in \N_0 ,$ we may assume that $r< \infty.$\\
 \textit{The case $r=0$}. If $f$ is $C^{0,0},$ then $f$ is continuous and hence $C^0$.\\ 
\textit{The case $r\geq1.$} Assume $U,V$ are open subsets. Then  $D_{(w,0)}f(x,y)$ exists and is continuous in $(x,y,w),$ and $D_{(0,v)}f(x,y)$ exists and is continuous in $(x,y,v).$  Hence by \eqref{eq: rule:pdiff} $f$ is $C^1$ and 
\begin{equation} \label{eq:dwv}
df((x,y),(w,v))=D_{(w,0)}f(x,y)+D_{(0,v)}f(x,y),  
\end{equation}
which is continuous in $(x,y,w,v).$ Thus $f$ is $C^1$. In the general case, the right hand side of (\ref{eq:dwv}) is continuous for $(x,y,w,v)\in U \times V \times E_1 \times E_2$ and extends $d (f \lvert_{U^0 \times V^0})$. Hence $f$ is $C^1.$ Next, note that  
  $D_{(w,0)}f(x,y)$ and $D_{(0,v)}f(x,y)$ are $C^{r-1,r-1}$-mappings, by Lemma \ref{rn-n1} and Corollary \ref{rs-sr}. Hence $df$ is $C^{r-1}$, by induction. Since $f$ is a $C^1$ and $df$ is $C^{r-1}$, the map $f$ is $C^{r}$. 
\end{proof}

\begin{rem} \label{smoo}
If $r=\infty,$ then a map $f\colon U \times V \to F$ is $C^\infty$ if and only if it is $C^{\infty,\infty}$ (as an immediate consequence of Lemma \ref{lem: C11:C1}).              
\end{rem}
% schwart theorem

\begin{lem}[Chain Rule 1] \label{chain-1}
Let $X_1$, $X_2$, $E_1$, $E_2$ and $F$ be locally convex spaces, $P$, $Q$, $U$ and $V$ be locally convex subsets with dense interior of $X_1$, $X_2$, $E_1$ and $E_2$ respectively, $r,s \in \N_0 \cup \{\infty\}$, $f\colon  U \times V \rightarrow F $ a $C^{r,s}$-map, $g_1\colon P\rightarrow U$ a $C^r$-map and $g_2\colon Q \rightarrow V$ a $C^s$-map. Then 
$$ f \circ (g_1 \times g_2)\colon  P \times Q \rightarrow F, \; (p,q) \mapsto f(g_1(p),g_2(q))$$ 
is a $C^{r,s}$-map.
\end{lem}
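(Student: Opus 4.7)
The plan is to proceed by induction on $s \in \N_0$, handling $s = \infty$ and $r = \infty$ at the end by the observations $C^{r, \infty} = \bigcap_{s' \in \N_0} C^{r, s'}$ and $C^{\infty, s} = \bigcap_{r' \in \N_0} C^{r', s}$, each of which reduces the claim to the finite case. Throughout the argument, write $h := f \circ (g_1 \times g_2)$.

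For the base case $s = 0$, I would verify the three conditions of Lemma \ref{fx-dij} that characterise $h$ as a $C^{r, 0}$-map. Condition (a), continuity of $q \mapsto h(p, q)$ for fixed $p$, is immediate. Condition (b), that $p \mapsto h(p, q)$ is $C^r$ for fixed $q$, follows from the ordinary $C^r$ chain rule, since $f(\bullet, g_2(q))$ is $C^r$ by Lemma \ref{fx-dij}(b) applied to $f$ with $j = 0$, and $g_1$ is $C^r$. Condition (c), joint continuity of $d^{(i, 0)} h$ for $i \leq r$, I would prove by an auxiliary induction on $i$: the iterated Rule on Partial Differentials together with the ordinary chain rule expresses $d^{(i, 0)} h(p, q, w_1, \ldots, w_i)$ as a finite sum of terms built from $d^{(k, 0)} f$ evaluated at $(g_1(p), g_2(q))$ and iterated derivatives of $g_1$ at $p$, each of which is jointly continuous in $(p, q, w_1, \ldots, w_i)$.

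For the inductive step $s \geq 1$, I would invoke Lemma \ref{rn-n1} applied to $f$: it yields that $f$ is $C^{r, 0}$, $C^{0, 1}$, and that $d^{(0, 1)} f \colon U \times (V \times E_2) \to F$ is $C^{r, s - 1}$. By the base case, $h$ is $C^{r, 0}$. The ordinary $C^1$ chain rule combined with the Rule on Partial Differentials shows that $h$ is $C^{0, 1}$ with
\[ d^{(0, 1)} h(p, q, v) = d^{(0, 1)} f\bigl(g_1(p), g_2(q), dg_2(q, v)\bigr). \]
Define $G_2 \colon Q \times X_2 \to V \times E_2$ by $G_2(q, v) := (g_2(q), dg_2(q, v))$. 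Since $g_2$ is $C^s$, its differential $dg_2$ is $C^{s - 1}$, and hence $G_2$ is $C^{s - 1}$ (each component is). The displayed identity then reads $d^{(0, 1)} h = (d^{(0, 1)} f) \circ (g_1 \times G_2)$, so the inductive hypothesis applied to $d^{(0, 1)} f$, $g_1$, and $G_2$ yields that $d^{(0, 1)} h$ is $C^{r, s - 1}$. The converse direction of Lemma \ref{rn-n1} then gives that $h$ is $C^{r, s}$, completing the induction.

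The main technical obstacle is the base case, specifically condition (c): no single application of a $C^r$ chain rule delivers the joint continuity of $d^{(i, 0)} h$ directly, and the auxiliary induction must keep track of how every term in the derivative formula depends continuously on the full tuple $(p, q, w_1, \ldots, w_i)$, not merely on $p$ for fixed $q$. Once this is in hand, the inductive step is essentially formal, thanks to the clean decomposition of $C^{r, s}$ provided by Lemma \ref{rn-n1}.
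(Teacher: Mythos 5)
Your proposal is correct, and its engine is the same as the paper's: the identity $d^{(0,1)}(f\circ(g_1\times g_2))(p,q,v)=d^{(0,1)}f\bigl(g_1(p),g_2(q),dg_2(q,v)\bigr)$, recognised as $(d^{(0,1)}f)\circ(g_1\times G_2)$ with $G_2=(g_2,dg_2)$ of class $C^{s-1}$, fed back into the induction and closed off by Lemma \ref{rn-n1}. The only genuine divergence is the treatment of the $s=0$ case. The paper organises the argument as an outer induction on $r$ and disposes of the $C^{r,0}$ case with the remark that one argues ``as in the first part of the proof'' (i.e.\ by differentiating in the first variable, effectively by symmetry via Corollary \ref{rs-sr}), so that no higher-order chain-rule formula is ever written down; each step of its induction only ever takes one derivative. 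You instead fix $r$, induct on $s$ alone, and verify the $C^{r,0}$ base case directly through the three conditions of Lemma \ref{fx-dij}, with condition (c) resting on the Fa\`a di Bruno-type expansion of $d^{(i)}\bigl(f(\bullet,y)\circ g_1\bigr)$ and the joint continuity of $d^{(k,0)}f$. That is a legitimate and self-contained alternative -- indeed it makes explicit precisely the point the paper leaves terse -- at the cost of carrying the combinatorics of the higher-order chain rule; if you want to avoid that, you can replace your base case by the paper's trick: apply your own $C^{0,r}$ case to $(y,x)\mapsto f(x,y)$ (Corollary \ref{rs-sr}) and flip back. One small point to flag either way: the derivative identities are first established only for $(p,q)$ in the dense interior $P^0\times Q^0$, and one must observe that the right-hand sides are continuous on all of $P\times Q\times\cdots$ so that they furnish the required extensions; this is implicit in both your write-up and the paper's.
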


\begin{proof}
 Without loss of generality, we may assume that $r,s < \infty$. The proof is by induction on $r$. 
\newline
\textit{The case $r=0$}. If $s=0$, $ f \circ (g_1 \times g_2)$ is just a composition of continuous maps, which is continuous. 
\newline
Now let $s>0.$ For fixed $x \in U$, $f_x\colon V \rightarrow F$ is $C^s.$ Hence, for fixed $p \in P$, $f_{g_1(p)}\colon V \rightarrow F$ is $C^s$ and $f_{g_1(p)} \circ g_2 \colon  Q \rightarrow F$ is $C^s$ by the Chain Rule for $C^s$-maps (see \cite{gn07}). In particular, the latter is $C^1$, whence 
$$D_{(0,z)}(f \circ(g_1 \times g_2))(p,q)= d(f_{g_1(p)} \circ g_2)(q,z)= df_{g_1(p)}(g_2(q),dg_2(q,z))$$
exists for $z \in X_2$ and $q \in Q^0.$ Hence, 
$$d^{(0,1)}(f \circ (g_1 \times g_2))(p,q,z)=\underbrace{d^{(0,1)}f}_{C^{0,s-1}}(\underbrace{g_1(p)}_{C^0 \text{ in } p},\underbrace{g_2(q),dg_2(q,z)}_{C^0 \text{ in } (q,z)})$$
exists. By induction on $s$, the map $d^{(0,1)}(f \circ (g_1 \times g_2))$ is $C^{0,s-1}$. Hence, by Lemma \ref{rn-n1}, $f \circ (g_1 \times g_2)$ is $C^{0,s}.$ \newline
\textit{Induction step $(r > 0)$}. If $s=0$, we see as in the first part of the proof that $h:=f \circ (g_1,g_2)$ is $C^{r,0}$. \newline
If $s>0,$ we know that 
$$d^{(0,1)}h(p,q,z)=\underbrace{d^{(0,1)}f}_{C^{r,s-1}}(\underbrace{g_1(p)}_{C^r},\underbrace{g_2(q), dg_2(q,z)}_{C^{s-1}}).$$
By induction on $s,$ this is $C^{r,s-1}$. Hence, by Lemma \ref{rn-n1}, $h$ is $C^{r,s}.$
\end{proof}

\begin{lem}[Chain Rule 2]\label{chain-2}
Let $E_1$, $E_2$, $F$ and $Y$ be locally convex spaces, $U$, $V$ and $W$ be locally convex subsets with dense interior of $E_1$, $E_2$ and $F$ respectively, $r,s \in \N_0 \cup \{\infty\}$, $f\colon  U \times V \rightarrow F $ a $C^{r,s}$-map with $ f(U \times V ) \subseteq W$ and $g\colon W \rightarrow Y$ be a $C^{r+s}$-map. Then 
$$ g \circ f\colon  U \times V \rightarrow Y$$ 
is a $C^{r,s}$-map.
\end{lem}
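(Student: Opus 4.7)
The plan is a double induction, with the main induction on $s$ using Lemma \ref{rn-n1} and an auxiliary induction on $r$ for the base case $s=0$. I may assume $r,s<\infty$ and set $h\coloneq g\circ f$. The slicewise chain rule formulas I will repeatedly invoke are
\[
d^{(1,0)}h(x,y,w)=dg\bigl(f(x,y),d^{(1,0)}f(x,y,w)\bigr)\quad\text{and}\quad d^{(0,1)}h(x,y,z)=dg\bigl(f(x,y),d^{(0,1)}f(x,y,z)\bigr),
\]
valid on $U^0\times V^0$ by applying the classical chain rule for $C^r$-maps to the slices $f(\bullet,y)$ and $f(x,\bullet)$ respectively; both right-hand sides are jointly continuous on the full domain and thus supply the required continuous extensions.

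For the base case $s=0$, I proceed by induction on $r$. The subcase $r=0$ is just continuity of a composition. For $r\geq 1$, I use the symmetric form of Lemma \ref{rn-n1} obtained via Corollary \ref{rs-sr}: $h$ is $C^{r,0}$ iff $h$ is $C^{0,0}$, $C^{1,0}$, and $d^{(1,0)}h\colon (U\times E_1)\times V\to Y$ is $C^{r-1,0}$. The first two conditions follow from the formula above. For the third, write $d^{(1,0)}h=dg\circ\Psi$ with $\Psi(x,y,w)\coloneq(f(x,y),d^{(1,0)}f(x,y,w))$. Applying the symmetric Lemma \ref{rn-n1} to $f$ itself shows $d^{(1,0)}f$ is $C^{r-1,0}$ on $(U\times E_1)\times V$, and Lemma \ref{rs-la} shows the first component of $\Psi$ is likewise $C^{r-1,0}$; Lemma \ref{lem: mp:prod} then yields $\Psi\in C^{r-1,0}$. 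Since $dg$ is $C^{r-1}$, the sub-inductive hypothesis gives $d^{(1,0)}h=dg\circ\Psi\in C^{r-1,0}$, closing the base case.

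For the inductive step $s\geq 1$, I verify the three conditions of Lemma \ref{rn-n1} for $h$. Condition $(a)$, $h\in C^{r,0}$, follows from the base case since $f\in C^{r,s}\subseteq C^{r,0}$ and $g\in C^{r+s}\subseteq C^r$. Condition $(b)$, $h\in C^{0,1}$, follows from the base case applied to the flipped map $(y,x)\mapsto f(x,y)$ (which is $C^{s,r}\subseteq C^{1,0}$ by Corollary \ref{rs-sr}) combined with another use of Corollary \ref{rs-sr}. For condition $(c)$, the slicewise formula gives $d^{(0,1)}h=dg\circ\Phi$ with $\Phi(x,y,z)\coloneq(f(x,y),d^{(0,1)}f(x,y,z))\colon U\times(V\times E_2)\to W\times F$. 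The first component is $C^{r,s-1}$ by Lemma \ref{rs-la}, and the second is $C^{r,s-1}$ by the forward direction of Lemma \ref{rn-n1} applied to $f$; hence $\Phi\in C^{r,s-1}$ by Lemma \ref{lem: mp:prod}. Because $dg$ is $C^{r+s-1}=C^{r+(s-1)}$, the outer inductive hypothesis applied to $dg\circ\Phi$ delivers $d^{(0,1)}h\in C^{r,s-1}$, and Lemma \ref{rn-n1} then assembles $(a)$–$(c)$ into $h\in C^{r,s}$.

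The main obstacle is the base case $s=0$, $r\geq 1$: one must set up the symmetric form of Lemma \ref{rn-n1} through Corollary \ref{rs-sr} and verify via Lemmas \ref{rs-la} and \ref{lem: mp:prod} that the auxiliary map $\Psi$ carries the correct $C^{r-1,0}$-structure over the swapped product $(U\times E_1)\times V$. Once this is in place, the inductive step on $s$ is essentially bookkeeping driven by the decomposition supplied by Lemma \ref{rn-n1}.
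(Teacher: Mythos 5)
Your proof is correct and follows essentially the same route as the paper: both rest on the slicewise identity $d^{(0,1)}(g\circ f)(x,y,v)=dg\bigl(f(x,y),d^{(0,1)}f(x,y,v)\bigr)$ and reassemble via Lemma \ref{rn-n1} in a double induction, merely with the roles of the two induction variables interchanged (the paper inducts primarily on $r$, you primarily on $s$). Your treatment of the case $s=0$, $r\geq 1$ via the transposed form of Lemma \ref{rn-n1} obtained from Corollary \ref{rs-sr} is in fact more explicit than the paper's, which dismisses that case with ``as in the first part of the proof.''
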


\begin{proof}
 Without loss of generality, we may assume that $r,s < \infty$. The proof is by induction on $r$. 
\newline
\textit{The case $r=0$}. If $s=0$, $g \circ f$ is just composition of continuous maps, which is continuous. 
\newline
Now let $s>0.$ For fixed $x \in U$, $f_x\colon V \rightarrow F$ is $C^s$ and $g\colon W\rightarrow Y$ is $C^s.$ Hence $g \circ f_x \colon  V \rightarrow Y$ is $C^s$ by the Chain Rule for $C^s$-maps (see \cite{gn07}). In particular, it is $C^1$, whence 
$$D_{(0,v)}(g \circ f)(x,y)=d(g \circ f_x)(y,v) =dg(f_x(y),df_x(y,v))=dg(f(x,y),d^{(0,1)}f(x,y,v))$$
exists for $v \in E_2$, if $x \in U^0, \, y \in V^0.$ Now 
$$d^{(0,1)}(g\circ f)\colon U \times (V \times E_2 ) \rightarrow Y$$
$$(x,y,v)\mapsto \underbrace{dg}_{C^{r+s-1}}(\underbrace{f(x,y),d^{(0,1)}f(x,y,v)}_{C^{0,s-1}})$$
is a $C^{0,s-1}$-map, by Lemma \ref{rn-n1} and induction on $s$. Hence, by Lemma \ref{rn-n1}, $g \circ f $ is $C^{0,s}.$ \newline
\textit{Induction step $(r > 0)$}. If $s=0$, we see as in the first part of the proof, that $h:=g \circ f$ is $C^{r,0}$. \newline
If $s>0,$ we know that $h$ is $C^{r,0}$ by the preceding. Moreover,
$$d^{(0,1)}h(x,y,v)=\underbrace{dg}_{C^{r+s-1}}(\underbrace{f(x,y),d^{(1,0)}f(x,y,v)}_{C^{r,s-1}}).$$
Hence, by induction on $s$ the map $d^{(0,1)}h$ is $C^{r,s-1}$. Hence by Lemma \ref{rn-n1}, $h$ is $C^{r,s}.$
\end{proof}

\begin{lem}[Chain Rule 3]\label{lem: chainrule}
   Let $r,s,k \in \N_0 \cup \set{\infty}$ with $k \geq r+s$. Assume that $U \subseteq E_1$ and $V \subseteq E_2, P \subseteq E_3$ are locally convex subsets with dense interior of locally convex spaces. Let $E_4$ be another locally convex space, $f \colon U \times V \rightarrow P$ of class $C^{r,s}$ and $g \colon U \times P \rightarrow E_4$ a map of class $C^{r,k}$. Then  
      $$g \circ (\pr_U , f) \colon U \times V \rightarrow E_4, (x,y) \mapsto g(x,f(x,y))$$ 
  is a map of class $C^{r,s}$, where $\pr_U \colon U \times V \rightarrow U$ is the canonical projection.
\end{lem}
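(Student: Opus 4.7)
I would proceed by induction on $s$. Without loss of generality, $r,s,k<\infty$. The computation which drives the induction is, by the Rule on Partial Differentials,
\begin{equation}\label{eq:planh1}
d^{(0,1)}h(x,y,v)\;=\;d^{(0,1)}g\bigl(x,\,f(x,y),\,d^{(0,1)}f(x,y,v)\bigr),
\end{equation}
where the right-hand side is itself of the form ``$\tilde g\circ(\pr_U,\tilde f)$'' as in the statement of the lemma, with the regularity of $\tilde g$ and $\tilde f$ each reduced by one in the second slot.

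\textbf{Base case $s=0$.} Since $k\geq r$, the hypothesis $g\in C^{r,k}$ together with Lemma~\ref{lem: C11:C1} gives $g\in C^{r}$. The map $H\colon U\times V\to U\times P$, $(x,y)\mapsto(x,f(x,y))$, is $C^{r,0}$ by Lemma~\ref{lem: mp:prod} (its components $\pr_U$ and $f$ being $C^{r,0}$). Hence Chain Rule 2 (Lemma~\ref{chain-2}), applied with the second index equal to $0$, yields that $h=g\circ H$ is $C^{r,0}$.

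\textbf{Inductive step $s\geq1$.} By Lemma~\ref{rn-n1}, it suffices to verify that (i) $h$ is $C^{r,0}$, (ii) $h$ is $C^{0,1}$, and (iii) $d^{(0,1)}h$ is $C^{r,s-1}$. Item (i) follows from the base case applied with the present $r$ (noting $k\geq r+s\geq r$). Item (ii) is verified directly from \eqref{eq:planh1}: for fixed $x$, $h_x=g_x\circ f_x$ is $C^1$ as a composition of $C^1$-maps, and the partial derivative given by \eqref{eq:planh1} is continuous in $(x,y,v)$. For (iii), set $\tilde g\coloneq d^{(0,1)}g\colon U\times(P\times E_3)\to E_4$, which is $C^{r,k-1}$ by Lemma~\ref{rn-n1}, and $\tilde f\colon U\times(V\times E_2)\to P\times E_3$, $(x,(y,v))\mapsto(f(x,y),d^{(0,1)}f(x,y,v))$, which is $C^{r,s-1}$ by Lemma~\ref{lem: mp:prod} (its components being $C^{r,s}$ and $C^{r,s-1}$, respectively, the latter by Lemma~\ref{rn-n1}). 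Then $d^{(0,1)}h=\tilde g\circ(\pr_U,\tilde f)$, and since $k-1\geq r+(s-1)$, the outer inductive hypothesis on $s$ applies to give (iii).

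\textbf{Main obstacle.} The argument is a clean induction; the only point requiring care is to recognise the right-hand side of \eqref{eq:planh1} as a genuine instance of the lemma with parameters $(r,s-1,k-1)$ satisfying the bound $k-1\geq r+(s-1)$, so that the outer inductive hypothesis applies. The base case $s=0$ is essentially immediate once one observes that $k\geq r$ lets us treat $g$ as a $C^r$-map and invoke Chain Rule 2.
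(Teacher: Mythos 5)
Your argument is correct, but it is organized differently from the paper's. You run a single induction on $s$, disposing of the base case $s=0$ for \emph{arbitrary} $r$ by observing that $g\in C^{r,k}\subseteq C^{r,r}$ is $C^r$ (Lemma \ref{lem: C11:C1}) and that $(\pr_U,f)$ is $C^{r,0}$ (Lemma \ref{lem: mp:prod}), so that Chain Rule 2 (Lemma \ref{chain-2}) with second index $0$ gives $h\in C^{r,0}$; the inductive step then feeds $d^{(0,1)}h=d^{(0,1)}g\bigl(x,f(x,y),d^{(0,1)}f(x,y,v)\bigr)$ back into the lemma with parameters $(r,s-1,k-1)$ and closes via Lemma \ref{rn-n1}. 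The paper instead runs an outer induction on $r$: for $r=0$ it performs essentially your inner induction on $s$, and for $r>0$ it computes $d^{(1,0)}h$ via the Rule on Partial Differentials as the sum $d^{(1,0)}g(x,f(x,y);u)+d^{(0,1)}g(x,f(x,y);d^{(1,0)}f(x,y;u))$ and checks each summand is $C^{r-1,s}$, which requires the auxiliary facts that $d^{(1,0)}g$ is $C^{r-1,k}$ (via Corollary \ref{rs-sr}, Lemma \ref{rn-n1} and Lemma \ref{rs-hh}). Your route buys a shorter proof that never touches $d^{(1,0)}$ — all differentiability in the first variable is packaged into the $C^{r,0}$ condition of Lemma \ref{rn-n1} and delegated to Chain Rule 2 — at the cost of invoking Lemma \ref{lem: C11:C1} and Lemma \ref{chain-2} in the base case, where the paper only needs continuity of a composition. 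Both are complete; there is no circularity in your use of Chain Rule 2, which is proved independently. The only point worth tightening in a write-up is item (ii): for $x\in U$ one should note that $g_x$ is $C^k$ and $f_x$ is $C^s$ by Lemma \ref{fx-dij}(a) (valid also at boundary points of $U$), so that the claimed formula for $d^{(0,1)}h$ holds on $U^\circ\times V^\circ$ and its continuous extension to $U\times V\times E_2$ is supplied by the same expression — this matches the level of detail in the paper's own proof.
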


\begin{proof}
 We may assume $r,s < \infty$. Use induction on $r$ starting with $r = 0$: The map $\pr_U$ is the restriction of a continuous linear map and hence $C^\infty$. We have to prove that $g \circ (\text{pr}_U , f)$ is a map of class $C^{0,s}$. Proceed by induction on $s$: As $\text{pr}_U$ is continuous $g \circ (\text{pr}_U , f)$ is continuous. We may now assume $s > 0$. Fixing $x \in U$, the Chain Rule for $C^s$-maps yields: 
      \begin{displaymath}
       d^{(0,1)} (g \circ (\text{pr}_U,f)) (x,y,z) = d^{(0,1)} g ( x, f(x,y), d^{(0,1)} f(x,y;z)) .
      \end{displaymath}
 The derivative is a composition of the $C^{0,k-1}$-map $d^{(0,1)} g$ (with respect to $U \times (P \times E_3)$) and $(\text{pr}_U , (f \circ \text{pr}_{U \times V} , d^{(0,1)} f))$. By a combination of Lemma \ref{lem: mp:prod} and Lemma \ref{chain-1}, $(f \circ \text{pr}_{U \times V} , d^{(0,1)} f) \colon U \times (V \times E_2) \rightarrow P \times E_3, (x,y,z) \mapsto (f(x,y),d^{(0,1)} f(x,y;z))$ is a map of class $C^{0,s-1}$. As $k-1 \geq s-1$, the induction hypothesis implies that $d^{(0,1)} (g \circ (\text{pr}_U , f))$ is a map of class $C^{0,s-1}$. Thus $g \circ (\text{pr}_U , f)$ is of class $C^{0,1}$ and the preceding derivative is a map of class $C^{0,s-1}$. From Lemma \ref{rn-n1} we conclude that $g \circ (\text{pr}_U , f)$ is of class $C^{0,s}$. This settles the cases $r=0, s \in \N_0$.\\ 
 We may now assume $r > 0$ and by induction the claim holds for $C^{r-1,s}$-maps for each $s \in \N_0$. We already know that $g \circ (\text{pr}_U , f)$ is a $C^{r-1,s}$-map and $g$ is a $C^1$-map by Lemma \ref{lem: C11:C1} as it is at least $C^{1,1}$. Fixing $y \in V$, the Chain Rule for $C^1$-maps and \eqref{eq: rule:pdiff} yields: 
  \begin{align}\label{eq: deriv}
   &d^{(1,0)} (g \circ (\text{pr}_U , f)) (x,y,u)\notag \\
= &d^{(1,0)} g (x,f(x,y);u) + d^{(0,1)} g(x,f(x,y); d^{(1,0)} f(x,y;u)).
  \end{align}
 The latter derivative is continuous, whence $g\circ (\text{pr}_U , f)$ is a map of class $C^{1,0}$.  
 The derivative $d^{(1,0)} g$ is of class $C^{r-1,k}$ with respect to $U \times (P \times E_1)$ which follows from Corollary \ref{rs-sr} Lemma \ref{rn-n1} and Lemma \ref{rs-hh}. Furthermore $$(f \circ \text{pr}_{U \times V} , \text{pr}_{E_1}) \colon U \times (V \times E_1) \rightarrow P \times E_1$$ is of class $C^{r,s}$, by Lemma \ref{lem: mp:prod}. Hence the induction hypothesis implies that the first summand in \eqref{eq: deriv} is of class $C^{r-1,s}$. On the other hand $d^{(0,1)} g \colon U \times (V \times E_2) \rightarrow F$ is of class $C^{r,k-1}$ by Lemma \ref{rn-n1} and by Lemma \ref{lem: mp:prod}, $(f \circ \text{pr}_{U \times V}, d^{(1,0)} f) \colon U  \times  (V \times E_1) \rightarrow P \times E_3$ is a map of class $C^{r-1,s}$ using that $d^{(0,1)}f \colon U \times (V \times E_1) \rightarrow F$ is $C^{r-1,k}$. Observe that by choice of $k$, already $k-1 \geq r-1 +s$ holds. Therefore the induction hypothesis shows that the second summand is of class $C^{r-1,s}$ with respect to $U \times (V \
times 
E_2)$. Summing up, the derivative $d^{(1,0)} (g \circ (\text{pr}_U , f))$ is a map of class $C^{r-1,s}$. We conclude from Lemma \ref{rn-n1} and Corollary \ref{rs-sr} that $g \circ (\text{pr}_U , f)$ is a map of class $C^{r,s}$.
\end{proof}

\begin{prop}  \label{eval-rk}
 Let $E$ be a finite-dimensional vector space, $F$ a locally 
 convex space, $U$ be a locally convex and locally compact subset with dense interior of $E$ and $s \in \mathbb{N}_0 \cup \{\infty\}$. Then the evaluation map 
 $$\varepsilon: C^s(U,F)\times U \rightarrow F, \; \varepsilon (\gamma,x) := \gamma(x)$$  
 of $C^{s}(U,F)$ is  $C^{\infty,s}.$ 
\end{prop}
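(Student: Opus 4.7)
The strategy is to reduce the claim to Lemma \ref{rs-hh1} by swapping the two factors via Corollary \ref{rs-sr}, exploiting the fact that evaluation is linear in the function argument. Concretely, I would introduce the companion map
\[
 g\colon U\times C^s(U,F)\to F,\qquad g(x,\gamma)\coloneq \gamma(x)=\varepsilon(\gamma,x),
\]
which is linear in its second argument. By Corollary \ref{rs-sr}, $\varepsilon$ is $C^{\infty,s}$ if and only if $g$ is $C^{s,\infty}$; and by Lemma \ref{rs-hh1} (applied with $n=1$ and $H_1=C^s(U,F)$), knowing that $g$ is merely $C^{s,0}$ already forces $g$ to be $C^{s,\infty}$. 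Thus the whole problem reduces to showing that $g$ is $C^{s,0}$.

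To establish the $C^{s,0}$-property, observe first that for fixed $\gamma\in C^s(U,F)$ the map $g(\cdot,\gamma)=\gamma$ is $C^s$ by hypothesis. The remaining task is to check that, for each $i\in\N_0$ with $i\le s$, the iterated directional derivative in the first variable is given on $U^\circ\times C^s(U,F)$ by
\[
 d^{(i,0)}g(x,\gamma,w_1,\ldots,w_i)=d^{(i)}\gamma(x,w_1,\ldots,w_i),
\]
and that this formula extends continuously to all of $U\times C^s(U,F)\times E^i$. I would prove this by factoring the right-hand side as the composition of the continuous linear map
\[
 d^{(i)}(\bullet)\colon C^s(U,F)\to C(U\times E^i,F),\quad \gamma\mapsto d^{(i)}\gamma
\]
(continuous by the very Definition \ref{defn: CrTop} of the compact-open $C^s$-topology as initial topology) with the evaluation map $C(U\times E^i,F)\times (U\times E^i)\to F$. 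Since $E$ is finite-dimensional, $E^i$ is locally compact, and together with the hypothesis that $U$ is locally compact this makes $U\times E^i$ locally compact; consequently this evaluation is jointly continuous, and hence so is the composition. Therefore $g$ is $C^{s,0}$.

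The main (and really the only) obstacle is the joint continuity step just sketched: it is the place where the finite-dimensionality of $E$ and the local compactness of $U$ enter, so that the classical joint continuity of evaluation on $C(X,F)$ for a locally compact $X$ can be invoked. Once this continuity is in hand, Lemma \ref{rs-hh1} promotes $g$ from $C^{s,0}$ to $C^{s,\infty}$, and Corollary \ref{rs-sr} transports this back to the desired conclusion that $\varepsilon$ is $C^{\infty,s}$.
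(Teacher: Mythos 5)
Your proposal is correct, and it takes a genuinely different route from the paper's. The paper proves the statement by induction on $s$: the base case $s=0$ uses exactly your flip-and-linearity argument (joint continuity of evaluation on the locally compact $U$, then Lemma \ref{rs-hh1} and Corollary \ref{rs-sr}), while the induction step computes $d^{(0,1)}\varepsilon(\gamma,x,w)=\varepsilon_1(d\gamma,(x,w))$ with $\varepsilon_1$ the evaluation of $C^{s-1}(U\times E,F)$, invokes the induction hypothesis on the enlarged domain $U\times E$ together with Lemma \ref{rs-la} and the continuity of $\gamma\mapsto d\gamma$, and concludes via Lemma \ref{rn-n1}. You instead dispense with the induction entirely: you verify in one stroke that the swapped map $g$ is $C^{s,0}$, by identifying $d^{(i,0)}g(x,\gamma,w)=d^{(i)}\gamma(x,w)$ on $U^\circ$ and obtaining the continuous extension to $U\times C^s(U,F)\times E^i$ as the composite of the (by definition of the initial topology) continuous linear map $d^{(i)}$ with the jointly continuous evaluation on $C(U\times E^i,F)\times(U\times E^i)$ --- this is where finite-dimensionality of $E$ and local compactness of $U$ enter, just as in the paper's base case --- and then a single application of Lemma \ref{rs-hh1} promotes $C^{s,0}$ to $C^{s,\infty}$. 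Your version is arguably cleaner in that it needs the local-compactness input only once at the top level and never has to re-apply the proposition to the enlarged domain $U\times E$; the paper's inductive scheme, on the other hand, is the template it reuses elsewhere (e.g.\ in peeling off derivatives via Lemma \ref{rn-n1}) and generalizes more directly to situations where the order in the second factor must be reduced step by step.
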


\begin{proof}
 Without loss of generality, we may assume that $s < \infty$. The proof is by induction on $s$.
\newline
If $s=0,$ then $\varepsilon$ is continuous because $U$ is locally compact \cite[Theorem 3.4.3]{Engelking1989}. Also, $\varepsilon$ is linear in the first argument. Hence $\varepsilon$ is $C^{\infty,0},$ by Lemma \ref{rs-hh1} and Corollary \ref{rs-sr}.\newline
Let $s\geq 1.$ For $x\in U^0, \, w \in E, \, \gamma \in C^{s}(U,F)$ and small $t \in \R \setminus\{ 0 \},$
$$\frac{1}{t}(\varepsilon(\gamma,x+tw)-\varepsilon(\gamma,x)) = \frac{1}{t}( \gamma(x+tw)-\gamma(x))\to d\gamma(x,w) \text{ as }t\to 0.$$
Hence $d^{(0,1)}\varepsilon (\gamma,x,w)$ exists and is given by
\begin{equation} \label{eq:d-epsilon}
d^{(0,1)}\varepsilon(\gamma,x,w)=d\gamma(x,w)=\varepsilon_1(d\gamma,(x,w)),
\end{equation}
where $ \varepsilon_1\colon  C^{s-1}(U\times E,F)\times(U\times E)\to F , \: (\zeta,z)\mapsto \zeta(z)$
is $C^{\infty,s-1},$ by induction.\newline
The right-hand side of \eqref{eq:d-epsilon} defines a continuous map (in fact a $C^{\infty, s-1}$-map)
$$C^\infty(U,F)\times (U\times E) \to F$$
by induction and Lemma \ref{rs-la}, using that
$$C^s(U,F)\to C^{s-1}(U\times E,F), \: \gamma \mapsto d \gamma $$
is continuous linear. Thus, by Lemma \ref{rn-n1}, $\varepsilon$ is $C^{\infty,s}.$ 
\end{proof} 
 
\begin{defn} \label{Crs-top}
Let $E_1, E_2$ and $F$  be  locally convex spaces, $U$ and $V$ be locally convex subsets with dense interior of $E_1$ and $E_2$ respectively, and $r,s \in \N_0 \cup \{\infty \}.$ \newline
Give $C^{r,s}(U\times V,F)$ the initial topology with respect to the mappings
$$d^{(i,j)}\colon C^{r,s}(U \times V,F) \to C(U \times V \times E^i_1 \times E^j_2 ,F), \: \gamma \mapsto d^{(i,j)}\gamma $$
for $ \;i,j \in \N_0\text{ such that } i\leq r,j\leq s,$
where the right-hand side is equipped with the compact-open topology.
\end{defn}

\begin{lem}  \label{lem: smo-smo}
Let $E_1$, $E_2$ and $F$ be locally convex spaces,   
$U$ and $V$ be locally convex subsets with dense interior of $E_1$ and $E_2$ respectively, then 
$$C^{\infty,\infty}(U\times V,F) =C^{\infty}(U\times V,F)$$ as topological vector spaces. 
\end{lem}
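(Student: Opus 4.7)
Equality as sets is already at hand: by Lemma \ref{lem: C11:C1} and Remark \ref{smoo}, a map $f\colon U\times V\to F$ is $C^\infty$ if and only if it is $C^{\infty,\infty}$. The task is therefore to compare the two initial topologies, the $C^\infty$-topology (initial with respect to $d^{(k)}\colon C^\infty(U\times V,F)\to C(U\times V\times (E_1\times E_2)^k,F)$ for $k\in\N_0$) with the $C^{\infty,\infty}$-topology (initial with respect to $d^{(i,j)}\colon C^{\infty,\infty}(U\times V,F)\to C(U\times V\times E_1^i\times E_2^j,F)$ for $i,j\in\N_0$). The plan is to show continuity of the identity map in both directions by factoring each defining seminorm family of one topology through the other; the key observation is that pullback along a continuous (linear) map between the parameter spaces is continuous with respect to the compact-open topology, since it sends compact sets to compact sets.

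\medskip
\emph{Direction $C^\infty\Rightarrow C^{\infty,\infty}$.} For fixed $i,j\in\N_0$ with $k\coloneq i+j$, the continuous linear inclusion
\[
 \iota\colon U\times V\times E_1^i\times E_2^j\to U\times V\times (E_1\times E_2)^k,\quad (x,y,w,v)\mapsto \bigl((x,y),(w_1,0),\ldots,(w_i,0),(0,v_1),\ldots,(0,v_j)\bigr)
\]
satisfies $d^{(i,j)}f=(d^{(k)}f)\circ\iota$ by definition. Hence $d^{(i,j)}$ factors as $d^{(k)}$ followed by the pullback $\iota^*\colon C(U\times V\times (E_1\times E_2)^k,F)\to C(U\times V\times E_1^i\times E_2^j,F)$, which is continuous for the compact-open topologies. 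Since $d^{(k)}$ is continuous on $C^\infty(U\times V,F)$ for each $k$, so is each $d^{(i,j)}$, and continuity of $\id\colon C^\infty\to C^{\infty,\infty}$ follows from the universal property of initial topologies.

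\medskip
\emph{Direction $C^{\infty,\infty}\Rightarrow C^\infty$.} Here one expresses $d^{(k)}f$ as a finite sum of pullbacks of the $d^{(i,j)}f$. Applying the Rule on Partial Differentials (\ref{eq: rule:pdiff}) inductively and using Schwarz' Theorem \ref{schwarz} to commute the mixed derivatives, one obtains
\[
 d^{(k)}f\bigl((x,y),(w_1,v_1),\ldots,(w_k,v_k)\bigr)=\sum_{I\subseteq\{1,\ldots,k\}} d^{(|I|,k-|I|)}f\bigl(x,y,(w_i)_{i\in I},(v_j)_{j\notin I}\bigr).
\]
For each subset $I$ the corresponding summand is the pullback of $d^{(|I|,k-|I|)}f$ along a continuous linear map from $U\times V\times (E_1\times E_2)^k$ to $U\times V\times E_1^{|I|}\times E_2^{k-|I|}$, hence depends continuously on the appropriate $d^{(i,j)}f$ in the compact-open topology. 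Summing over the finitely many subsets $I$ shows that $d^{(k)}$ is continuous on $C^{\infty,\infty}(U\times V,F)$, and the universal property gives continuity of $\id\colon C^{\infty,\infty}\to C^\infty$.

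\medskip
The routine but slightly fiddly point is checking the displayed expansion of $d^{(k)}f$; this is a bookkeeping exercise that I would settle by an induction on $k$, with the base $k=1$ being exactly (\ref{eq: rule:pdiff}) and the step using Schwarz to put all $E_1$-directions before all $E_2$-directions inside each resulting summand. Everything else is purely formal given the structural results already established.
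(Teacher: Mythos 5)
Your proposal is correct and follows essentially the same route as the paper: set equality via Lemma \ref{lem: C11:C1} and Remark \ref{smoo}, then mutual continuity of the defining families, with $d^{(i,j)}$ obtained from $d^{(i+j)}$ by pullback along a continuous linear insertion, and conversely $d^{(k)}$ expanded as $\sum_{I\subseteq\{1,\ldots,k\}} g_I^*\bigl(d^{(|I|,k-|I|)}f\bigr)$. The only cosmetic difference is that the paper justifies this expansion directly from the multilinearity of $d^{(k)}f(x,y,\bullet)$ by writing $(w_m,v_m)=(w_m,0)+(0,v_m)$ (with Schwarz implicit in reordering), whereas you propose an induction on $k$ via the Rule on Partial Differentials; both are valid.
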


\begin{proof} 
By Lemma \ref{lem: C11:C1} and Remark \ref{smoo} both spaces coincide as sets. Thus it suffices to show that the $C^{\infty,\infty}$-
topology coincides with the $C^\infty$-topology. As those topologies are initial topologies, we only have to prove that the families of maps inducing the topologies are continuous with respect to the other topology.
For $x \in U,\, y \in V,\, w:=(w_1,\ldots,w_i) \in E_1^i $ and $v:=(v_1,\ldots,v_j)\in E_2^j$, we have
$$ d^{(i,j)}f(x,y,w,v)= d^{(i+j)}f(x,y,(w_1,0),\ldots,(w_i,0),(0,v_1),\ldots,(0,v_j)).$$
Let $g\colon U\times V\times E_1^i\times E_2^j \to U\times V\times (E_1\times E_2)^{i+j},\, (x,y,w_1,\ldots,w_i,v_1,\ldots,v_j)\mapsto (x,y,(w_1,0),\ldots,(w_i,0),(0,v_1),\ldots,(0,v_j))$. As $g$ is continuous linear, by \cite[Proposition 4.4]{hg2004}, the pullback $g^*$ is continuous. Hence by continuity of $d^{(i+j)}$, $ d^{(i,j)}$ is continuous with respect to the $C^\infty$-topology. This proves that the $C^{\infty,\infty}$-topology is coarser than the $C^\infty$-topology. To show the converse we recall that $d^{(k)} f(x,y, \bullet)$ is multilinear. Writing $(w_i,v_i) = (w_i,0) + (0,v_i)$ we obtain 
  \begin{displaymath}
   d^{(k)}=\sum_{I \subseteq \set{1,\ldots,k}}g_I^* (d^{(\lvert I \rvert ,k-\lvert I \rvert )} f),
  \end{displaymath}
where we defined $g_I (x,y,(w_1,v_1),\ldots , (w_k,v_k)) \coloneq (x,y,w_{i_1}, \ldots , w_{i_{\lvert I \rvert}} , v_{j_1} , \ldots , v_{j_{k - \lvert I \rvert}})$ for $I = \set{i_1, \ldots , i_{\lvert I \rvert}}$ and $\set{1,\ldots, k} \setminus I = \set{j_1, \ldots , j_{k-\lvert I\rvert}}$. Clearly each $g_I$ is continuous linear, whence smooth and we deduce from \cite[Proposition 4.4]{hg2004} that $d^{(k)}$ is continuous with respect to the $C^{\infty,\infty}$-topology. Hence the assertion follows.
\end{proof}

\begin{lem} \label{ap-d}
Let $E$ and $F$  be  locally convex spaces, $U$ be a locally convex subset with dense interior of $E$, $r,s \in \N_0 \cup \{\infty \}.$ Then sets of the form 
$$\bigcap_{i=0}^k\{\gamma \in C^r(U,F):d^{(i)}\gamma(K_i)\subseteq Q_i\}$$
form a basis of $0$-neighbourhoods in $C^r(U,F),$ for $k\in \N_0$ such that $k \leq r,$\; compact sets $K_i \subseteq U \times E^i $ and $0$-neighbourhoods $Q_i \subseteq F.$
\end{lem}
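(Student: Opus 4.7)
The topology on $C^r(U,F)$ is, by Definition \ref{defn: CrTop}, the initial topology with respect to the linear maps $d^{(j)} \colon C^r(U,F) \to C(U \times E^j, F)$ for $0 \leq j \leq r$, where the target carries the compact-open topology. My plan is to unfold this initial topology into an explicit description of a basis of $0$-neighbourhoods.

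First I would use the standard fact that, with $F$ locally convex, the sets $\{f \in C(X,F) : f(K)\subseteq Q\}$ with $K\subseteq X$ compact and $Q\subseteq F$ a $0$-neighbourhood already form a basis of $0$-neighbourhoods for the compact-open topology on $C(X,F)$: given two of them, the set obtained by taking the union of the compacta and the intersection of the $0$-neighbourhoods sits inside their intersection. Pulling back along $d^{(j)}$ then shows that the sets $\{\gamma : d^{(j)}\gamma(K)\subseteq Q\}$ with $0\le j\le r$, $K\subseteq U\times E^j$ compact and $Q$ a $0$-neighbourhood in $F$ form a subbasis of $0$-neighbourhoods in $C^r(U,F)$, and hence their finite intersections form a genuine basis.

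The remaining step, and the only place where any real bookkeeping is needed, is to rewrite an arbitrary such finite intersection
\[
\mathcal{N}\;=\;\bigcap_{l=1}^n \{\gamma : d^{(j_l)}\gamma(K_l')\subseteq Q_l'\}
\]
in the form stated. I would set $k:=\max_l j_l\leq r$ (or $k=0$ if $n=0$); for each $i\in\{0,\ldots,k\}$, let $L_i:=\{l : j_l=i\}$, and set $K_i:=\bigcup_{l\in L_i} K_l'$ (a finite union of compacta, hence compact; taken to be $\emptyset$ if $L_i=\emptyset$) together with $Q_i:=\bigcap_{l\in L_i} Q_l'$ (a finite intersection of $0$-neighbourhoods, hence a $0$-neighbourhood; taken to be $F$ if $L_i=\emptyset$). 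Then any $\gamma$ satisfying $d^{(i)}\gamma(K_i)\subseteq Q_i$ for all $i\leq k$ automatically satisfies $d^{(j_l)}\gamma(K_l')\subseteq d^{(j_l)}\gamma(K_{j_l})\subseteq Q_{j_l}\subseteq Q_l'$ for each $l$, so $\bigcap_{i=0}^k\{\gamma:d^{(i)}\gamma(K_i)\subseteq Q_i\}\subseteq\mathcal{N}$. Conversely, each set of the form in the statement is obviously a finite intersection of subbasic $0$-neighbourhoods and hence a $0$-neighbourhood itself, so these sets indeed form a basis. No substantive analytic obstacle arises; the argument is purely a reshuffling of indices once the compact-open topology on the targets has been described.
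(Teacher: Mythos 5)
Your proposal is correct and follows essentially the same route as the paper: both arguments rest on the fact that the $C^r$-topology is initial with respect to the maps $d^{(j)}$ into $C(U\times E^j,F)_{c.o.}$ and on the standard $0$-neighbourhood basis $\{f: f(K)\subseteq Q\}$ for the compact-open topology (the paper phrases this via the embedding into the product $\prod_{j\le r}C(U\times E^j,F)$). Your explicit reindexing step -- merging sets of equal degree by unions of compacta and intersections of $0$-neighbourhoods, and padding missing degrees with $K_i=\emptyset$, $Q_i=F$ -- is just a spelled-out version of what the paper leaves implicit, and it is sound.
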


\begin{proof}
The topology on $C^r(U,F)$ is initial with respect to the maps 
$$d^{(i)}\colon C^r(U,F)\to C(U\times E^i,F)_{c.o}, \; \gamma \mapsto d^{(i)} \gamma. $$ 
Therefore the map
$$ \Psi\colon  C^r(U,F)\to \prod_{\N_0 \ni i\le r}C(U\times E^i,F), \; \gamma \mapsto (d^{(i)} \gamma)_{\N_0 \ni i\le r}$$
is a topological embedding. Sets of the form  
$$ W:=\{(\eta_i)_{\N_0 \ni i\le r}\in \prod_{\N_0 \ni i\le r}C(U\times E^i,F):\;\eta_i(K_i)\subseteq Q_i \text{ for  }i=0,\ldots,k \}$$
(with $k\in \N_0$ such that $k \leq r,$\; compacts sets $K_i \subseteq U \times E^i $ and $0$-neighbourhoods $Q_i \subseteq F),$ form a basis of $0$-neighbourhoods in $\prod_{\N_0 \ni i\le r} C(U\times E^i,F).$ Hence the sets $\Phi^{-1}(W)$ form a basis of $0$-neighbourhoods in $C^r(U,F).$
\end{proof}

Similarly:

\begin{lem}\label{ap-f}
Let $E_1, E_2$ and $F$  be  locally convex spaces, $U$ and $V$ be locally convex subsets with dense interior of $E_1$ and $E_2$ respectively, and $r,s \in \N_0 \cup \{\infty \}.$ The sets 
$$W=\{ \gamma \in C^{r,s}(U \times V,F)\colon d^{(i,j)}\gamma(K_{i,j})\subseteq P_{i,j} \text{ for } i=0,\ldots,k \text{ and } j=0,\ldots, l \}$$
(where $k \in \N_0$ such that $k \le r$, $l \in \N_0$ such that $l\leq s,\; P_{i,j}\subseteq F$ are $0$-neighbourhoods and $K_{i,j} \subseteq U\times V\times E^i_1 \times E^j_2$ is compact) form a basis of $0$-neighbourhoods for\\ $C^{r,s}(U \times V,F).$
\end{lem}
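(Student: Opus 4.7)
The plan is to mirror the argument used for Lemma \ref{ap-d}, now indexed by pairs $(i,j)$ instead of by a single index $i$. By Definition \ref{Crs-top} the topology on $C^{r,s}(U\times V,F)$ is the initial topology with respect to the family of linear maps
\begin{displaymath}
 d^{(i,j)}\colon C^{r,s}(U\times V,F)\to C(U\times V\times E_1^i\times E_2^j,F)_{\mathrm{c.o}}\quad (i\le r,\, j\le s),
\end{displaymath}
so the diagonal map
\begin{displaymath}
 \Psi\colon C^{r,s}(U\times V,F)\to \prod_{\substack{i\le r\\ j\le s}} C(U\times V\times E_1^i\times E_2^j,F),\quad \gamma\mapsto (d^{(i,j)}\gamma)_{i,j}
\end{displaymath}
is a linear topological embedding.

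Next, I would recall the standard fact that in the product of topological vector spaces a basis of $0$-neighbourhoods is obtained by prescribing $0$-neighbourhoods on a finite set of factors and the whole space on the remaining factors. Combined with the fact that the compact-open topology on each factor $C(U\times V\times E_1^i\times E_2^j,F)$ has a subbasis of $0$-neighbourhoods of the form $\{\eta:\eta(K)\subseteq Q\}$ for $K\subseteq U\times V\times E_1^i\times E_2^j$ compact and $Q\subseteq F$ a $0$-neighbourhood, one obtains a basis of $0$-neighbourhoods in the product consisting of sets of the form
\begin{displaymath}
 W'=\bigl\{(\eta_{i,j})_{i,j}:\eta_{i,j}(K_{i,j})\subseteq P_{i,j}\text{ for }i=0,\ldots,k,\ j=0,\ldots,l\bigr\},
\end{displaymath}
with $k\le r$, $l\le s$, compact sets $K_{i,j}$ in the relevant product and $0$-neighbourhoods $P_{i,j}\subseteq F$. (When $r$ or $s$ equals $\infty$, choose $k,l<\infty$; when $r$ or $s$ is finite, the product has finitely many factors in the corresponding slot and the constraint is automatic beyond the range.)

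Since $\Psi$ is a topological embedding, the preimages $\Psi^{-1}(W')$ form a basis of $0$-neighbourhoods in $C^{r,s}(U\times V,F)$. These preimages are precisely the sets $W$ described in the statement, which completes the proof. The step I expect to require the most care is simply the bookkeeping to ensure that finitely many conditions (one for each pair $(i,j)$ with $i\le k$, $j\le l$) suffice to describe a basis, which follows from the elementary description of basic open sets in an arbitrary direct product of topological vector spaces.
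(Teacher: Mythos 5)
Your proposal is correct and follows essentially the same route as the paper, which proves Lemma \ref{ap-f} by simply saying ``Similarly'' with reference to Lemma \ref{ap-d}: embed $C^{r,s}(U\times V,F)$ into the product of the spaces $C(U\times V\times E_1^i\times E_2^j,F)$ via the family $d^{(i,j)}$ and pull back the standard basic $0$-neighbourhoods of the product. The only detail worth noting is that an arbitrary finite set of index pairs can always be enlarged to a rectangle $\{0,\ldots,k\}\times\{0,\ldots,l\}$ by taking $P_{i,j}=F$ for the added pairs, which justifies the rectangular form in the statement; your closing remark on bookkeeping covers exactly this.
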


\begin{thm}\label{vee-rs}
Let $E_1, E_2$ and $F$  be  locally convex spaces, $U$ and $V$ be locally convex subsets with dense interior of $E_1$ and $E_2$ respectively, and $r,s \in \N_0 \cup \{\infty \}.$ Then

\begin{compactenum}
\item If $\gamma \colon U \times V \to F$ is $C^{r,s},$ then $\gamma_x\colon V \to F $ is $C^s$ for all $x\in U$ and 
$$ \gamma^{\vee}\colon U \to C^s(V,F), \; x \mapsto \gamma_x$$ 
is $C^r$.
\item The map $$\Phi\colon C^{r,s}(U \times V,F)\to C^r(U,C^s(V,F)), \; \gamma \mapsto \gamma^\vee$$
is linear and a topological embedding.
\end{compactenum}

\end{thm}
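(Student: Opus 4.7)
My plan is to prove part (a) by guessing the iterated directional derivatives and then verifying both existence in the compact-open $C^s$-topology and continuity. For $x \in U^0$ and $w_1, \ldots, w_i \in E_1$, the natural candidate for $(D_{w_i} \cdots D_{w_1} \gamma^\vee)(x)$ is the map
$$ \delta^{(i)}(x; w_1, \ldots, w_i) \colon V \to F, \qquad y \mapsto d^{(i,0)}\gamma(x, y, w_1, \ldots, w_i). $$
By Schwarz' Theorem (Theorem \ref{schwarz}) together with Remark \ref{sdint} and Lemma \ref{fx-dij}, this map lies in $C^s(V,F)$: its iterated directional derivatives in $y$ are precisely $d^{(i,j)}\gamma(x, y, w_1, \ldots, w_i, v_1, \ldots, v_j)$, which are continuous in $(y, v_1, \ldots, v_j)$.

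I would proceed by induction on $r$. The base case $r=0$ is a continuity statement for $\gamma^\vee \colon U \to C^s(V,F)$; by Lemma \ref{ap-d}, it suffices to check that for each $j \leq s$, each compact $K \subseteq V \times E_2^j$ and each continuous seminorm $q$ on $F$, the map $x \mapsto \sup_{(y,v)\in K} q(d^{(0,j)}\gamma(x,y,v))$ is continuous, and this is immediate from joint continuity of $d^{(0,j)}\gamma$ combined with compactness of $K$. For the inductive step, the key claim is that at $x \in U^0$,
$$\lim_{t \to 0} t^{-1}\bigl(\gamma^\vee(x+tw_1) - \gamma^\vee(x)\bigr) = \delta^{(1)}(x; w_1) \quad \text{in } C^s(V,F).$$
Unwinding this convergence via Lemma \ref{ap-d}, I need, for every $j \leq s$ and compact $K \subseteq V \times E_2^j$,
$$ \sup_{(y,v)\in K} q\Bigl( t^{-1}\bigl(d^{(0,j)}\gamma(x+tw_1,y,v) - d^{(0,j)}\gamma(x,y,v)\bigr) - d^{(1,j)}\gamma(x,y,w_1,v) \Bigr) \to 0. $$
The fundamental theorem of calculus rewrites the expression as $\int_0^1 \bigl[d^{(1,j)}\gamma(x+stw_1,y,w_1,v) - d^{(1,j)}\gamma(x,y,w_1,v)\bigr]\,ds$, and uniform smallness over the compact $K$ follows from uniform continuity of $d^{(1,j)}\gamma$ on a small compact tube in $U \times V \times E_1 \times E_2^j$. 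After taking one directional derivative, the assignment $(x,w_1) \mapsto \delta^{(1)}(x;w_1)$ is of the same shape as $\gamma^\vee$ but for the $C^{r-1,s}$-map $d^{(1,0)}\gamma$ (viewed appropriately via Lemma \ref{rs-hh}), so the induction hypothesis applies and finishes both existence of the higher derivatives and their joint continuity.

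For part (b), linearity is trivial and injectivity follows from recovering $\gamma(x,y) = \gamma^\vee(x)(y)$. Continuity of $\Phi$ reduces, by definition of the compact-open $C^r$-topology on $C^r(U,C^s(V,F))$, to continuity of the maps $\gamma \mapsto d^{(i)}(\gamma^\vee) \colon U \times E_1^i \to C^s(V,F)$; chaining in the target-topology description from Lemma \ref{ap-d} reduces these in turn to the maps $\gamma \mapsto d^{(i,j)}\gamma$, which are continuous by Definition \ref{Crs-top}. To see $\Phi$ is a topological embedding, I compare zero-neighbourhood bases: Lemma \ref{ap-f} describes such a basis in $C^{r,s}(U\times V,F)$ by sets $\{\gamma : d^{(i,j)}\gamma(K_{i,j}) \subseteq Q_{i,j}\}$, and the same sets can be rewritten as preimages under $\Phi$ of basic zero-neighbourhoods in $C^r(U,C^s(V,F))$ via two applications of Lemma \ref{ap-d} (one for $C^r$ and one for $C^s$, exploiting that a product of compacts in $U \times E_1^i$ and $V \times E_2^j$ is compact in the relevant product). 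Hence the $C^{r,s}$-topology is initial with respect to $\Phi$, so $\Phi$ is a topological embedding.

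The principal obstacle is justifying the passage to the limit $t \to 0$ uniformly on compacts of $V \times E_2^j$ for every $j \leq s$ simultaneously, which is exactly what is needed to place the limit inside $C^s(V,F)$ rather than merely in a pointwise or $C^0$-sense. The fundamental theorem of calculus combined with continuity of $d^{(1,j)}\gamma$ on all of $U \times V \times E_1 \times E_2^j$ is the right tool; when $U$ or $V$ fails to be open, one does the computation at interior points and transfers the conclusion to the boundary via the (unique) continuous extensions supplied by Definition \ref{crs1-map}, mimicking the density argument from the proof of Lemma \ref{fx-dij}.
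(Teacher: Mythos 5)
Your proposal is correct and follows the same skeleton as the paper's proof: the same candidate derivatives $d^{(i,0)}\gamma(x,\bullet,w_1,\ldots,w_i)$, induction on $r$, and for part (b) the same comparison of $0$-neighbourhood bases via Lemmas \ref{ap-d} and \ref{ap-f} after replacing compact sets by products of their projections. The one genuine divergence is in the inductive step of (a) for $s\geq 1$: the paper carries out the difference-quotient estimate only in $C(V,F)$ (the case $s=0$, via the Mean Value Theorem and the Wallace Lemma) and then bootstraps to general $s$ formally, using that $\gamma\mapsto(\gamma,d\gamma)$ is a closed linear topological embedding $C^s(V,F)\to C(V,F)\times C^{s-1}(V\times E_2,F)$ together with Lemma \ref{rn-n1}, so that $f^\vee$ is $C^r$ once $f^\vee$ and $(d^{(0,1)}f)^\vee$ are. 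You instead prove the convergence of the difference quotient directly in the $C^s$-topology, i.e.\ uniformly on compacta of $V\times E_2^j$ for every $j\leq s$ simultaneously, via the integral representation with $d^{(1,j)}\gamma$. Both are valid; the paper's route confines the hard analytic estimate to a single case and propagates it by soft functional-analytic arguments, while yours repeats the (identical) estimate for each $j$ but avoids the auxiliary embedding lemma. Your closing of the induction via $(d^{(1,0)}\gamma)^\vee$ for the $C^{r-1,s}$-map $d^{(1,0)}\gamma$ (justified by Corollary \ref{rs-sr}, Lemma \ref{rn-n1} and Lemma \ref{rs-hh}) matches what the paper does in its $s=0$ branch.

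One small imprecision: in the base case $r=0$ you say it suffices that $x\mapsto\sup_{(y,v)\in K}q(d^{(0,j)}\gamma(x,y,v))$ be continuous. As literally stated this is not the right criterion (continuity of the supremum of a seminorm does not give continuity into $C(K,F)$); what is needed is $\sup_{(y,v)\in K}q\bigl(d^{(0,j)}\gamma(x,y,v)-d^{(0,j)}\gamma(x_0,y,v)\bigr)\to 0$ as $x\to x_0$. Your stated justification (joint continuity of $d^{(0,j)}\gamma$ plus compactness of $K$, i.e.\ the Wallace Lemma) proves exactly this corrected statement, so the idea is sound, but the criterion should be phrased as a statement about differences.
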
   

\begin{proof}
\item (a) $\gamma_x\colon V \to F $ is $C^s$ for all $x\in U$ by Lemma \ref{fx-dij}. \newline
 Since $C^{\infty}(V,F) = \varprojlim_{s \in \mathbb{N}_{0}} C^{s}(V,F)$ (\cite{gn07}), we have 
 $$C^{r}(U,C^{\infty}(V,F)) =  \varprojlim_{s \in \mathbb{N}_{0}} C^{r}(U,C^{s}(V,F)).$$ 
It therefore suffices to prove the assertion when $ s \in \mathbb{N}_{0}$ (cf. \cite[Lemma 10.3]{bgn}). We may assume that $r$ is finite. The proof is by induction on $r$. 
\newline
\textit{The case $r=0$.} If $s=0$ then the assertion follows from \cite[Theorem 3.4.1]{Engelking1989}.\newline
If $s \geq 1$, the topology on $C^{s}(V,F)$ is initial with respect to the maps
$$d^{(j)}\colon C^{s}(V,F) \to C(V \times E^j_2 ,F)_{c.o}, \: \gamma \mapsto d^{(j)}\gamma,\, \text{for } j \in \N_0\text{ such that } j\leq s.$$

Hence, we only need that $ d^{(j)} \circ f^\vee\colon  U \to C(V \times E_2^j,F)_{c.o}$ is continuous for $j \in \{0,1,\ldots,s\}.$ Now
$$ d^{(j)}(f^\vee(x))=d^{(j)}(f(x,\bullet))=d^{(0,j)}f(x,\bullet)=(d^{(0,j)}f)^\vee(x).$$
Thus $d^{(j)} \circ f^\vee = (d^{(0,j)}f)^\vee \colon  U \to C(V \times E_2^j,F)_{c.o},$ which is continuous by induction. As a consequence, $ \gamma^\vee \colon U \to C^s(V,F)$ is continuous. \newline
\textit{The case $r\geq 1$}. If $s=0.$ then $f^\vee\colon U \to C(V,F).$
Let $x \in U^0, \; z \in E_1.$ Then $x+tz \in U^0,$ for small $t \in \R \cup \{\infty\};$ we show that
$$\frac{1}{t}(f^\vee(x+tz)-f^\vee(x)) \to d^{(1,0)}f(x,\bullet,z) $$ in $C(V,F)$ as $t\to 0.$
For this, let $K \subseteq V $ be compact. we have to show that  
$$ (\frac{1}{t}(f^\vee(x+tz)-f^\vee(x)))\lvert_{K} \to (d^{(1,0)}f(x,\bullet,z))\lvert_{K} $$ uniformly  as $t\to 0$. Let $W \subseteq F$ be a $0$-neighbourhood. Without loss of generality, $W$ is closed and absolutely convex. There is $\varepsilon \geq 0$ such that $x+B^\R_\varepsilon(0)z\subseteq U^0.$ For $y \in K$ and $t \in \R \setminus  \{0\} $ such that $\left| t \right|  < \varepsilon,$ we have
\begin{align*}
\Delta(t,y):&= \frac{1}{t}(f^\vee(x+tz)-f^\vee(x))(y)- d^{(1,0)}f(x,y,z) \\
            &= \frac{1}{t}(f(x+tz,y)-f(x,y))- d^{(1,0)}f(x,y,z) \\  
            &= \int^1_0  d^{(1,0)}f(x+\sigma tz,y,z)d\sigma - d^{(1,0)}f(x,y,z)\\
            &= \int^1_0 ( d^{(1,0)}f(x+\sigma tz,y,z) - d^{(1,0)}f(x,y,z))\, d\sigma.
\end{align*}
The function 
$$g\colon B^\R_\varepsilon(0)\times K\times [0,1] \to F, (t,y,\sigma)\longmapsto d^{(1,0)}f(x+\sigma tz,y,z)-d^{(1,0)}f(x,y,z)$$
is continuous and $g(0,y,\sigma)=0$ for all $(y,\sigma) \in K \times [0,1].$ Because $K \times [0,1]$ is compact, by the Wallace Lemma (see \cite[3.2.10]{Engelking1989}), there exists $\delta \in (0,\varepsilon] $ such that $ g(B^\R_\delta (0) \times K \times [0,1])\subseteq W.$ Hence 
$\Delta(t,y)= \int^1_0 g(t,y,\sigma)d \sigma \in W$ for all $ y \in K$ and all $t \in B^\R_\delta(0) \setminus \{0\}.$ Because this holds for all $y \in K,$ we see that $ \Delta(t,\bullet) \to 0$ uniformly, as required. Thus $df^\vee (x,z)$ exists for all $x \in U^0, \; z \in E_1$ and is given by $df^\vee(x,z)=d^{(1,0)}f(x,\bullet,z).$ Now
$$U \to C(V,F), x \mapsto d^{(1,0)}f(x,\bullet,z)$$
is a continuous function in all of $U$ (by $r=0$); so $f^\vee$ is $C^1$ on $U,$ and $ df^\vee(x,z)=d^{(1,0)}f(x,\bullet,z).$ Because 
$$h\colon (U\times E_1)\times V \to F, \; ((x,z),y) \mapsto d^{(1,0)}f(x,y,z)$$
is $C^{(r-1,0)}$ (see Lemma \ref{rn-n1} and Corollary \ref{rs-sr}), by induction $d(f^\vee)=h^\vee\colon U \times E_1 \to C(V,F)$ is $C^{r-1}$. Hence $f$ is $C^r.$ \newline
Let $s \geq 1.$ Because 
$$C^s(V,F)\to C(V,F)\times C^{s-1}(V \times E_2,F), \; \gamma \mapsto (\gamma,d\gamma)$$
is a linear topological embedding with closed image (see \cite[Lemma 91]{al}), $f^\vee\colon U\to C^s (V,F)$ will be $C^r$ if $f^\vee\colon U\to C(V,F)$ is $C^r$ (which holds by induction) and the map 
$$ h\colon U\to C^{s-1}(U\times E_2,F), \; x \mapsto d(f^\vee(x))$$
is $C^r$ (see \cite{gn07}; cf. \cite[Lemma 10.1]{bgn}) For $x \in U, \; y \in V$ and $z \in E_2,$ we have 
$$ h(x)(y,z)= d(f^\vee(x))(y,z)=d(f(x,\bullet))(y,z)=d^{(0,1)}f(x,y,z),$$
thus $h=(d^{(0,1)}f)^\vee$ for $d^{(0,1)}f\colon U \times (V\times E_2)\to F.$ This function is $C^{r,s-1}$ by Lemma \ref{rn-n1}. Hence $h$ is $C^r$ by induction.
\item (b) \rm{The linearity of $\Phi$} is clear. For $y \in V,$ the point evaluation $\lambda\colon C^s(V,F)\to F,\; \eta \mapsto \eta (y)$   
is continuous linear. Hence        
\begin{align*}
(d^{(i)}f^\vee)(x,w_1,\ldots, w_i)(y)&=\lambda ((d^{(i)}f^\vee)(x,w_1,\ldots,w_i))\\
                                     &=d^{(i)}(\lambda \circ f^\vee)(x,w_1,\ldots,w_i)\\
                                     &=d^{(i)}(f(\bullet,y)(x,w_1,\ldots,w_i))\\                                                          
                                     &=d^{(i,0)}f(x,y,w_1,\ldots,w_i),
\end{align*}                                          
using that $(\lambda \circ f^\vee)(x)=\lambda(f^\vee(x))=f^\vee(x)(y)=f(x,y).$
Hence 
$$(d^{(i)}f^\vee)(x,w_1,\ldots,w_i)=(d^{(i,0)}f)(x,\bullet,w_1,\ldots,w_i).$$
Hence by Schwarz' Theorem (Theorem \ref{schwarz}) $$d^{(j)}((d^{(i)}f^\vee)(x,w_1,\ldots,w_i))(y,v_1,\ldots,v_j)=d^{(i,j)}f(x,y,w_1,\ldots,w_i,v_1,\ldots,v_j).$$\newline
\textit{$\Phi$ is continuous at 0.} Let $W \subseteq C^r(U,C^s(V,F))$ be a $0$-neighbourhood. After shrinking $W,$ without loss of generality 
$$W=\bigcap_{i=0}^k\{\gamma \in C^r(U,C^s(V,F))\colon d^{(i)}\gamma(K_i)\subseteq Q_i\}$$
where $k \in \N_0$ with $k \leq r,\, K_i \subseteq U \times E_1^i$ is compact and $Q_i \subseteq C^s(V,F)$ is a $0$-neighbourhood (see Lemma \ref{ap-d}). Using Lemma \ref{ap-d} again, after shrinking  $Q_i$ we may assume that 
$$Q_i= \bigcap_{j=0}^{l_i}\{\eta \in C^s(V,F)\colon d^{(j)}\eta(L_{i,j})\subseteq P_{i,j}\}$$
with $l_i \in \N_0,$ such that $l_i \le s,$ compact sets $ L_{i,j} \subseteq V \times E^j_2$ and $0$-neighbourhoods $P_{i,j} \subseteq F$ shrinking  $Q_i$ further, we may assume that $l_i=l$ is independent of $i.$ Then $W$ is the set of all 
$ \gamma \in C^r(U,C^s(V,F))$ such that $d^{(j)}(d^{(i)}\gamma(x,w))(y,v)\in P_{i,j}$
for all $ i=0,\ldots,k \text{ and } j=0,\ldots, l, \; (x,w) \in K_i \subseteq U \times E^i_1$ and $(y,v)\in L_{i,j} \subseteq V \times E_2^j .$ The projections of $U \times E_1^i$ onto the factors $U$ and $E^i_1$ are continuous, hence the images $K^1_i$ and $K^2_i$ of $K_i$ under these projections are compact. After replacing $K_i$ by $K_i^1 \times K_i^2,$ without loss of generality $K_i=K_i^1 \times K_i^2.$ Likewise, without loss of generality $L_{i,j}= L_{i,j}^1 \times  L_{i,j}^2 $ with compact sets  $L_{i,j}^1 \subseteq V$ and $L_{i,j}^2 \subseteq  E^j_2.$\newline
Now if $\gamma \in C^{r,s}(U \times V,F)$ then $d^{(j)}(d^{(i)}\gamma^\vee (x,w))(y,v)=d^{(i,j)}\gamma(x,y,w,v).$ Hence $\gamma^\vee \in W$ if and only if $d^{(i,j)}\gamma(K_i^1\times L_{i,j}^1 \times K_i^2 \times L_{i,j}^2) \subseteq P_{i,j}$ for all $i=0,\ldots,k$ and $j=0,\ldots, l_i.$ This is a basic neighbourhood in $C^{r,s}(U\times V, F)$ (see Lemma \ref{ap-f}). Thus $\Phi^{-1}(W)$ is a $0$-neighbourhood, whence $\Phi$ is continuous at $0$, and hence $\Phi$ is continuous.\newline
It is clear that $\Phi$ is injective. To see that $\Phi$ is an embedding, it remains to show that $\Phi(W)$ is a $0$-neighbourhood in $\im(\Phi)$ for each $W$ in a basis of $0$-neighbourhoods in $C^{r,s}(U\times V,F).$\newline
Take $W$ as in Lemma \ref{ap-f}; without loss of generality, after increasing $K_{i,j},$ we may assume $K_{i,j}=K_{i,j}^1 \times L_{i,j}^1 \times K_{i,j}^2\times L_{i,j}^2$ with compact sets $K_{i,j}^1 \subseteq U,\; L_{i,j}^1 \subseteq V, \;  K_{i,j}^2\subseteq E_1^i$ and $L_{i,j}^2 \subseteq E_2^j.$ Then $\Phi(W):=\{\eta \in \im(\Phi)\colon  d^{(j)}(d^{(i)}\eta(x,w))(y,v) \in P_{i,j}\}$ 
for all $ i=0,\ldots,k, \;j=0,\ldots,l,\; x\in K_{i,j}^1 ,\;y\in L_{i,j}^1, \;  w \in K_{i,j}^2$ and $ v \in L_{i,j}^2 ,$ 
which is a $0$-neighbourhood in $\im (\Phi),$ by Lemma \ref{ap-d}.       
\end{proof}

\begin{lem} \label{ek-e}
Let $X$ be a topological space, $E$ and $F$ be locally convex spaces, $k \in \N,$ and $ f\colon X \times E^k \to F$ be a map such that $f(x,\bullet)\colon E^k \to F $ is symmetric $k$-linear for each $x \in X.$ Then $f$ is continuous if and only if $g\colon X\times E \to F, \; (x,w) \mapsto f(x,w,\ldots,w)$ is continuous.
\end{lem}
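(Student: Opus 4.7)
The ``only if'' direction is immediate: the map $\iota\colon X\times E\to X\times E^k$, $(x,w)\mapsto (x,w,\ldots,w)$ is continuous, and $g=f\circ \iota$.

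For the converse, the plan is to recover $f$ from $g$ via the polarization identity for symmetric $k$-linear maps. For each fixed $x\in X$, since $f(x,\bullet)\colon E^k\to F$ is symmetric and $k$-linear, the standard polarization formula (obtained by expanding $f(x,\sum_{i\in I}w_i,\ldots,\sum_{i\in I}w_i)$ by multilinearity and symmetry, then applying inclusion-exclusion) yields
\begin{equation*}
k!\,f(x,w_1,\ldots,w_k)=\sum_{I\subseteq\{1,\ldots,k\}}(-1)^{k-|I|}\,f\Bigl(x,\sum_{i\in I}w_i,\ldots,\sum_{i\in I}w_i\Bigr)=\sum_{I\subseteq\{1,\ldots,k\}}(-1)^{k-|I|}\,g\Bigl(x,\sum_{i\in I}w_i\Bigr).
\end{equation*}
For each subset $I\subseteq\{1,\ldots,k\}$, the map $X\times E^k\to X\times E$, $(x,w_1,\ldots,w_k)\mapsto\bigl(x,\sum_{i\in I}w_i\bigr)$ is continuous, so composing with the continuous $g$ and forming the finite sum shows that the right-hand side is a continuous function of $(x,w_1,\ldots,w_k)$. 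Dividing by $k!$ (using that $F$ is a locally convex $\mathbb{R}$- or $\mathbb{C}$-vector space, so scalar division by $k!$ is a continuous operation) gives continuity of $f$.

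There is no serious obstacle here; the only small subtlety is verifying the polarization identity itself, which follows from expanding $g(x,\sum_{i\in I}w_i)=\sum_{\varphi\colon\{1,\ldots,k\}\to I}f(x,w_{\varphi(1)},\ldots,w_{\varphi(k)})$ by multilinearity and then noting, via inclusion--exclusion on $I$, that all contributions except the surjective $\varphi$'s cancel; symmetry of $f(x,\bullet)$ collapses the surjective contributions to $k!\,f(x,w_1,\ldots,w_k)$.
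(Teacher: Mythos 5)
Your proof is correct and follows essentially the same route as the paper: both directions are handled identically, and your polarization formula indexed by subsets $I\subseteq\{1,\ldots,k\}$ is exactly the paper's formula indexed by $(\varepsilon_1,\ldots,\varepsilon_k)\in\{0,1\}^k$ (with $I=\{i:\varepsilon_i=1\}$). The only difference is that you sketch the inclusion--exclusion proof of the polarization identity, whereas the paper simply cites it.
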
 

\begin{proof}
The continuity of  $g$ follows directly from the continuity of $f$. If, conversely, $g$ is continuous, then by the Polarization Identity \cite[Theorem A]{BoSi1971}
$$f(x,w_1,\ldots,w_k)=\frac{1}{k!} \sum_{\varepsilon_{1},\ldots,\varepsilon_{k}=0}^1 (-1)^{k-(\varepsilon_1+\cdots + \varepsilon_k)}g(x,\varepsilon_1 w_1+\cdots+\varepsilon_k w_k),$$
which is continuous.

\end{proof}  

\begin{lem} \label{kl-ee}
Let $X$ be a topological space, $E_1,\; E_2$ and $F$ be locally convex spaces, $k,l \in \N,$ and $f\colon X\times E_1^k \times E_2^l \to F$ be a map such that $f(x,\bullet,w_1,\ldots,w_l)\colon E_1^k\to F$ is symmetric $k$-linear for all $x \in X$ and $w_1,\ldots,w_l \in E_2,$ and $f(x,v_1,\ldots,v_k,\bullet)\colon E_2^l \to F$ is symmetric $l$-linear for all $x \in X$ and $v_1,\ldots,v_k\in E_1.$ Then $f$ is continuous if and only if $g\colon X \times E_1 \times E_2 \to F, \; g(x,v,w):=f(x,v,\ldots,v,w,\ldots,w)$ is continuous.
\end{lem}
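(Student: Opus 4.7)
The plan is to reduce Lemma \ref{kl-ee} to Lemma \ref{ek-e} by applying the polarization identity twice, once in each block of variables. One direction is immediate: if $f$ is continuous, then $g$ is just the composition of $f$ with the continuous diagonal embedding $(x,v,w) \mapsto (x,v,\ldots,v,w,\ldots,w)$, hence continuous.

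For the nontrivial direction, I would first introduce an auxiliary map that absorbs the diagonalization in the $E_2$-variables. Define
\[
 h\colon X\times E_1^k\times E_2\to F,\quad h(x,v_1,\ldots,v_k,w):=f(x,v_1,\ldots,v_k,w,\ldots,w).
\]
For fixed $(x,w)$ the map $h(x,\bullet,w)\colon E_1^k\to F$ is symmetric $k$-linear (it is just $f$ with the last $l$ entries set equal to $w$), and its diagonal in the $v$-arguments is exactly $g(x,v,w)=h(x,v,\ldots,v,w)$, which is continuous by hypothesis. Viewing $X\times E_2$ as the parameter space, Lemma \ref{ek-e} then yields that $h\colon (X\times E_2)\times E_1^k\to F$ is continuous.

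Now I would recover $f$ from $h$ by the Polarization Identity of \cite{BoSi1971} applied to the symmetric $l$-linear map $f(x,v_1,\ldots,v_k,\bullet)\colon E_2^l\to F$, which gives
\[
 f(x,v_1,\ldots,v_k,w_1,\ldots,w_l)=\frac{1}{l!}\sum_{\varepsilon_1,\ldots,\varepsilon_l=0}^{1}(-1)^{l-(\varepsilon_1+\cdots+\varepsilon_l)}\, h(x,v_1,\ldots,v_k,\varepsilon_1 w_1+\cdots+\varepsilon_l w_l).
\]
Each summand is the composition of $h$ with the continuous map $(x,v,w_1,\ldots,w_l)\mapsto (x,v,\varepsilon_1 w_1+\cdots+\varepsilon_l w_l)$, hence continuous; a finite sum of continuous $F$-valued maps is continuous, so $f$ is continuous.

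The only subtle point is making sure Lemma \ref{ek-e} is applicable with a \emph{product} parameter space $X\times E_2$ in the first step; this is harmless since its proof uses no topological hypothesis on the parameter space other than being a topological space. Beyond that, the argument is a bookkeeping exercise in polarization, so I do not expect a real obstacle.
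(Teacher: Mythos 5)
Your proof is correct and follows essentially the same route as the paper: both directions rest on a double application of the polarization identity, once in each block of variables. The only cosmetic difference is that you delegate one of the two polarizations to Lemma \ref{ek-e} via the intermediate map $h$ (with parameter space $X\times E_2$, which is indeed harmless), whereas the paper writes both polarizations out explicitly in a single formula expressing $f$ directly in terms of $g$.
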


\begin{proof}
The continuity of  $g$ follows directly from the continuity of $f$. If, conversely, $g$ is continuous, then two applications of the Polarization Identity show that
\begin{align*}
& f( x,v_1,\ldots,v_k,w_1,\ldots,w_l)\\
&=\frac{1}{l!} \sum_{\varepsilon_{1},\ldots,\varepsilon_{l}=0}^1 (-1)^{l-(\varepsilon_1+\cdots + \varepsilon_l)}f(x,v_1,\ldots,v_k, \sum_{j=1}^l \varepsilon_j w_j,\ldots, \sum_{j=1}^l \varepsilon_j w_j)\\
&= \frac{1}{k!\,l!} \sum_{\varepsilon_{1},\ldots,\varepsilon_{l},\delta_1,\ldots,\delta_k=0}^1 (-1)^{l-(\varepsilon_1+\cdots + \varepsilon_l)} (-1)^{k-(\delta_1+\cdots + \delta_k)}g(x,\sum_{i=1}^k \delta_i v_i, \sum_{j=1}^l \varepsilon_j w_j),
\end{align*}                                  
whence $f$ is continuous.

\end{proof}

\begin{thm} {\bf(Exponential Law).} \label{kspace-iso}
Let $E_1, E_2$ and $F$  be  locally convex spaces, $U$ and $V$ be locally convex subsets with dense interior of $E_1$ and $E_2$ respectively, and $r,s \in \N_0 \cup \{\infty \}.$  Assume that at least one of the following conditions is satisfied:
\begin{compactenum}
\item $V$ is locally compact.
\item $r=s=0$ and $U \times V$ is a $k$-space.
\item $r\geq 1, \; s=0$ and $U \times V \times E_1 $ is a $k$-space.
\item $r=0, \; s\geq 1$ and $U \times V \times E_2 $ is a $k$-space.
\item $r\geq 1, \; s\geq1$ and $U \times V \times E_1 \times E_2 $ is a $k$-space.
\end{compactenum}
Then 
$$\Phi \colon C^{r,s}(U\times V,F)\to C^r(U,C^s(V,F)), \; f \mapsto f^\vee$$
is an isomorphism of topological vector spaces. Moreover, if $g\colon U \to C^s(V,F)$ is $C^r,$ then 
$$g^\wedge \colon U \times V \to F,\; g^\wedge (x,y):= g(x)(y)$$
is $C^{r,s}.$
\end{thm}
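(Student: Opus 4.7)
By Theorem~\ref{vee-rs}(b), $\Phi$ is a linear topological embedding, so the plan is to prove surjectivity. Equivalently (this is the ``moreover'' part), given $g \in C^r(U, C^s(V, F))$, I must show that $g^\wedge \colon U \times V \to F$, $(x, y) \mapsto g(x)(y)$, lies in $C^{r,s}(U \times V, F)$; then $\Phi(g^\wedge) = g$ exhibits $g$ in the image of $\Phi$.

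I verify the three criteria of Lemma~\ref{fx-dij} for $g^\wedge$. Condition (a) is immediate since $g^\wedge(x, \bullet) = g(x) \in C^s(V, F)$. For (b), for fixed $y \in V$ and $v \in E_2^j$ with $j \leq s$, the linear map $\lambda_{y,v} \colon C^s(V, F) \to F$, $\eta \mapsto d^{(j)}\eta(y, v)$, is continuous (as the composition of the continuous linear $d^{(j)} \colon C^s(V, F) \to C(V \times E_2^j, F)$ from the definition of the compact-open $C^s$-topology with point evaluation at $(y, v)$); hence $x \mapsto \lambda_{y,v}(g(x))$ is $C^r$ by Lemma~\ref{lam-rs}, and commuting continuous linear maps past iterated directional derivatives yields
\[
 d^{(i,j)} g^\wedge(x, y, w, v) = d^{(j)}(d^{(i)} g(x, w))(y, v) \qquad \text{for $(x, y) \in U^\circ \times V^\circ$.}
\]
The crux is condition (c): this formula must extend continuously to $U \times V \times E_1^i \times E_2^j$. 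Introducing $H_{i,j} \colon U \times E_1^i \to C(V \times E_2^j, F)$, $(x, w) \mapsto d^{(j)}(d^{(i)} g(x, w))$, which is continuous as a composition of the continuous $d^{(i)} g$ with the continuous linear $d^{(j)}$, condition (c) becomes the $(r,s) = (0,0)$ version of the exponential law itself, applied to $(X, Y) = (U \times E_1^i, V \times E_2^j)$: one needs that the hat of a continuous map $X \to C(Y, F)$ is continuous on $X \times Y$. Since $H_{i,j}^\wedge$ is symmetric $i$-linear in $w$ and symmetric $j$-linear in $v$, Lemmas~\ref{ek-e} and~\ref{kl-ee} reduce this continuity further to continuity of the diagonal restriction on $U \times V \times E_1 \times E_2$ (a single copy each of $E_1, E_2$).

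For hypotheses (b)--(e), the k-space condition on the appropriate product of $U$, $V$, and at most one copy each of $E_1, E_2$ reduces continuity to continuity on compact rectangles $K_1 \times K_2 \times K_3 \times K_4$; on each such rectangle the compact subspace $K_2 \times K_4 \subseteq V \times E_2$ is locally compact, so the classical continuity of evaluation $C(K_2 \times K_4, F) \times (K_2 \times K_4) \to F$ applies and gives the required continuity after composing with $H_{i,j}$ restricted to $K_1 \times K_3$. For hypothesis (a), $V$ is locally compact but $V \times E_2^j$ typically is not, so I argue by induction on $s$ via Lemma~\ref{rn-n1}: the base $s = 0$ follows directly from continuity of $\mathrm{ev} \colon C(V, F) \times V \to F$ giving $d^{(i,0)}g^\wedge$ continuous; the inductive step $s \geq 1$ applies the hypothesis to the continuous $C^r$-map $d \circ g \colon U \to C^{s-1}(V \times E_2, F)$, exploiting that $(d \circ g)(x)$ is linear in the $E_2$-factor and invoking Lemma~\ref{rs-hh1} (which promotes $C^{r, 0}$-regularity to $C^{r, \infty}$ in a multilinear argument) to sidestep the loss of local compactness of $V \times E_2$. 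The main obstacle is this last bootstrap: a naive induction would replace $V$ by the non-locally-compact $V \times E_2$, and only the multilinearity of the derivative in the new $E_2$-factor allows the argument to close.
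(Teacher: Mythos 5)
Your overall strategy coincides with the paper's: reduce everything to showing $g^\wedge\in C^{r,s}(U\times V,F)$, identify the iterated derivatives as $d^{(i,j)}g^\wedge(x,y,w,v)=d^{(j)}\bigl(d^{(i)}g(x,w)\bigr)(y,v)$, and reduce the continuity of the required extension to the continuity of $H_{i,j}^\wedge$, which by polarization (Lemmas \ref{ek-e} and \ref{kl-ee}) comes down to a single copy each of $E_1$ and $E_2$. For hypotheses (b)--(e) your compact-rectangle argument is precisely the topological exponential law for $k$-spaces that the paper invokes by citation, so that part is sound and essentially identical to the paper's proof.

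The gap is in your treatment of hypothesis (a). First, the premise of your detour is false: since $V$ is a locally convex subset of $E_2$ with dense interior, local compactness of $V$ yields a point of $V^\circ$ with a compact neighbourhood in $E_2$, so by Riesz's theorem $E_2$ is finite-dimensional (unless $V=\emptyset$, which is trivial); hence $V\times E_2^j$ \emph{is} locally compact and the direct evaluation argument from the $k$-space cases already applies. Second, the workaround you propose does not work as written: Lemma \ref{rs-hh1} applies to maps $U\times(H_1\times\cdots\times H_n)\to F$ that are $n$-linear in the \emph{entire} second block for fixed first argument, whereas $d^{(0,1)}g^\wedge\colon U\times(V\times E_2)\to F$ is linear only in the $E_2$-slot and is not a linear function of the $V$-slot, so the lemma's hypotheses are not met; exploiting partial linearity would require Lemma \ref{rs-hh}, whose derivative hypotheses you do not verify, and even the $C^{0,1}$-property demanded by Lemma \ref{rn-n1} is left unestablished under your (mistaken) assumption that $V\times E_2$ fails to be locally compact. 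The paper sidesteps all of this by writing $g^\wedge=\varepsilon\circ(g\times\id_V)$ with $\varepsilon$ the evaluation map of $C^s(V,F)$, which is $C^{\infty,s}$ by Proposition \ref{eval-rk} (whose hypotheses---$E_2$ finite-dimensional, $V$ locally compact---are exactly what case (a) provides), and concluding by Chain Rule 1 (Lemma \ref{chain-1}). Either repair closes the gap.
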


\begin{proof}
By Theorem \ref{vee-rs}, we only need to show the final assertion. In fact, given $g \in C^r(U,C^s(V,F)),$ the map $g^{\wedge}$ will be $C^{r,s}$ then, and hence $g=({g^\wedge})^\vee =\Phi(g^{\wedge}).$ Thus $\Phi $ will be surjective. Hence by Theorem \ref{vee-rs}, $\Phi$ will be an isomorphism of topological vector spaces.

\item (a) ${g}^\wedge(x,y) = g(x)(y) = \varepsilon (g(x),y)$ where $\varepsilon \colon C^s(V,F) \times V \to F, \; (\gamma,y)\mapsto \gamma (y)$ is $C^{\infty,s}$ (Proposition \ref{eval-rk}). Hence $ {g}^\wedge$ is $C^{r,s}$ by Chain Rule 1 (Lemma \ref{chain-1}). \newline
\textit{$k$-space conditions}. If $g\colon U \to C^s(V,F)$ is $C^r$, define $g^\wedge\colon U \times V \to F, \;g^\wedge(x,y)=g(x)(y).$ For fixed $x \in U,$ we have $g^\wedge(x,\bullet)=g(x)$ which is $C^s,$ hence 
\begin{align*}
(D_{(0,v_j)} \cdots D_{(0,v_1)} g^\wedge )(x,y)&= d^{(j)} (g(x))(y,v_1,\ldots, v_j)\\
                                               &= (d^{(j)}\circ g) (x)(y,v_1,\ldots,v_j)
\end{align*}
exists for  $j \in \N_0\text{ such that } j\leq s,\; y \in V^0$ and $v_1,\ldots,v_j \in E_2.$ Also,
$$ (D_{(0,v_j)} \cdots D_{(0,v_1)} g^\wedge )(x,y) = (\varepsilon _{(y,v_1,\ldots,v_j)}\circ d^{(j)}\circ g )(x),$$
where $\varepsilon _{(y,v_1,\ldots,v_j)} \colon  C^{s-j}(V\times E_2^j,F)\to F ,\; f\mapsto f(y,v_1,\ldots,v_j).$ For fixed $(y,v_1,\ldots,v_j),$ this is the function $ \varepsilon _{(y,v_1,\ldots,v_j)}\circ d^{(j)}\circ g $ of $x,$ which is $C^r.$  Since $\varepsilon _{(y,v_1,\ldots,v_j)}$ and $d^{(j)}\colon C^s(V,F) \to C^{s-j}(V \times E^j_2,F)$ are continuous linear, we obtain the directional derivatives    
\begin{align*}
&(D_{(w_i,0)}\cdots D_{(w_1,0)}D_{(0,v_j)} \cdots D_{(0,v_1)} g)(x,y)\\ 
&=\varepsilon _{(y,v_1,\ldots,v_j)}( d^{(j)}(d^{(i)} g(x,w_1,\ldots, w_i)))\\
&= d^{(j)}(d^{(i)} g(x,w_1,\ldots, w_i))( y,v_1,\ldots,v_j)\\
&=(d^{(j)}\circ(d^{(i)} g))(x,w_1,\ldots, w_i)( y,v_1,\ldots,v_j)\\
&=(d^{(j)}\circ(d^{(i)} g))^\wedge((x,w_1,\ldots, w_i),( y,v_1,\ldots,v_j))
\end{align*}  
for $x \in U^0,\; w_1,\ldots, w_i \in E_1,$ and $ i \in \N_0$ such that $i \leq r.$ 
To see that $g^\wedge $ is $C^{r,s},$ it therefore suffices to show that 
$$h\colon ( d^{(j)}\circ(d^{(i)} g))^\wedge\colon  U\times E^i_1 \times V\times E^j_2 \to F$$
is continuous for all $i,j \in \N_0$ such that $i \leq r, j \leq s.$\newline
\textit{The case $i=0,\; j=0.$} Then $h=g^\wedge,$ which is continuous by the case of topological spaces with $U \times V $ a $k$-space (see \cite[Proposition B.15]{Glox2}).\newline
\textit{The case $i=0,\; j\geq 1.$} Then $$h\colon (U\times V) \times E^j_2 \to F,\, h(x,y,\bullet):=d^{(j)}(g(x))(y,\bullet)\colon E^j_2 \to F $$ is symmetric $j$-linear. Hence, by Lemma \ref{ek-e}, $h$ is continuous if we can show that $ f\colon U\times V\times E_2\to F,\; (x,y,v)\mapsto d^{(j)}(g(x))(y,v,\ldots,v)=h(x,y,v,\ldots,v)$ is continuous.\newline
Now
$$
\begin{CD}
C^s (V,F) @> d^{(j)} >> C^0(V\times E^j_2,F) \\
@AA g A @VV C^0(\varphi,F)  V\\
U @> \eta >>C^0(V\times E_2, F).
\end{CD}
$$ 
\newline   
where $\varphi \colon V\times E_2 \to V \times E_2^j, \; (y,v)\mapsto (y,v,\ldots,v)$ and $C^0(\varphi,F)\colon C^0(V\times E^j_2,F) \to C^0(V \times E_2,F), \; \gamma \mapsto \gamma \circ \varphi$ is the pullback which is continuous linear (see \cite{gn07}; cf. \cite[Lemma 4.4]{hg2004}).

Hence $\eta\coloneq C^0(\varphi,F) \circ d^{(j)} \circ g\colon  U \to C^0(V\times E_2,F)$ is continuous. Because\\
 $U \times (V \times E_2)$ is a $k$-space by hypothesis, we know from the case of topological spaces (see \cite[Proposition B.15]{Glox2}) that $ f=\eta^\wedge\colon U \times (V\times E_2)\to F$ is continuous. \newline
\textit{The case $i \geq 1, \;j =0$.} Then 
$$h\colon U\times E^i_1 \times V \to F,\; h(x,w_1, \ldots ,w_j,y)=(d^{(j)}g)(x,w_1,\ldots , w_j)(y).$$ 
By Lemma \ref{ek-e}, $h$ is continuous if we can show that $f \colon U \times E_1 \times V \to F, \; f(x,w,y):=(d^{(i)}g)(x,w,\ldots,w)(y)$ is continuous. But $f=\psi^\wedge$ for the continuous map $\psi\colon  U \times E_1 \to C^0(V,F), \; (x,w)\mapsto (d^{(i)}g)(x,w,\ldots,w).$ Hence $f$ is continuous because $U \times E_1 \times V$ is a $k$-space by hypothesis.\newline
\textit{The case $i \geq 1,\;j \geq  1.$}  By Lemma \ref{kl-ee}, $h$ will be continuous if we can show that 
$$ f\colon U\times E_1 \times V \times E_2 \to F, \; f(x,w,y,v):=h(x,\underbrace{w,\ldots,w}_{i-\text{times}}, y,\underbrace{v,\ldots,v}_{j-\text{times}})$$
 is continuous. Now $\psi \colon  U \times E_1 \to U \times E^i_1, \; (x,w) \mapsto (x,w,\ldots,w)$ is continuous and $\theta:= C^0(\varphi,F) \circ d^{(j)}\circ d^{(i)}g \circ \psi\colon  U \times E_1 \to C^0(V\times E_2 ,F)$ is continuous. Since $ U \times E_1 \times V \times E_2$ is a $k$-space by hypothesis, it follows that $\theta^\wedge\colon U\times E_1 \times V\times E_2 \to F$ is continuous (see \cite[Proposition B.15]{Glox2}). But $\theta^\wedge=f,$ and thus $f$ is continuous.  
\end{proof}

\section{The Exponential Law for mappings on manifolds}

\begin{defn}\label{defnNEW}
We recall from \cite{gn07} that a \emph{manifold with rough boundary} modelled on a locally convex space~$E$ is a Hausdorff topological space $M$ with an atlas of smoothly compatible
homeomorphisms $\phi\colon U_\phi\to V_\phi$ from open subsets $U_\phi$ of $M$ onto locally convex subsets $V_\phi\subseteq E$ with dense interior.
If each $V_\phi$ is open, $M$ is an ordinary manifold (without boundary). If each $V_\phi$ is relatively open in a closed hyperplane $\lambda^{-1}([0,\infty[)$, where $\lambda\in E'$ (the space of continuous linear functional on $E$), then $M$ is a \emph{manifold with smooth boundary}. In the case of a \emph{manifold with corners}, each $V_\phi$ is a relatively open
subset of $\lambda^{-1}_1([0,\infty[)\cap\cdots\cap \lambda^{-1}_n([0,\infty[)$, for suitable $n\in \N$ (which may depend on $\phi$) and linearly independent $\lambda_1,\ldots,\lambda_n\in E'$.
\end{defn}

\begin{defn} \label{ch-rs}
Let $M_1$ and $M_2$ be smooth manifolds (possibly with rough boundary) modelled on locally convex spaces, $r,s \in \N_0 \cup \{\infty \}$ and $F$ be a locally convex space. A map $f\colon  M_1 \times M_2 \to F$ is called $C^{r,s}$ if $f \circ (\varphi^{-1} \times \psi^{-1})\colon V_\varphi \times V_\psi \to F$ is $C^{r,s}$ for all charts $ \varphi\colon U_\varphi \to V_\varphi $ of $M_1$ and $\psi\colon U_\psi \to V_\psi $ of $ M_2.$ Then $f$ is continuous in particular.
\end{defn}

\begin{defn} \label{ch-rs-in}
In the situation of  Definition \ref{ch-rs}, let $C^{r,s}(M_1 \times M_2,F)$ be the space of all $C^{r,s}$-maps $f\colon M_1 \times M_2 \to F.$ Endow $C^{r,s}(M_1 \times M_2,F)$  with the initial topology with respect to the maps $C^{r,s}(M_1 \times M_2,F) \to C^{r,s}(V_\varphi \times V_\psi,F), \; f \mapsto f \circ (\varphi^{-1} \times \psi^{-1}),$ for $\varphi$ and $\psi$ in the maximal smooth atlas of $M_1$ and $M_2$, respectively.    
\end{defn}

The following fact is well known (cf. \cite[Proposition 2.3.2]{Engelking1989}). 
\begin{lem} \label{prod-emb}
Let $(\theta_j)_{j \in J}$ be a family of topological embeddings $ \theta_j\colon X_j \to Y_j$ between topological spaces. Then also
$$ \theta \coloneq \prod_{j \in J} \theta_j \colon  \prod_{j \in J} X_j \to \prod_{j \in J} Y_j, \;(x_j)_{j \in J} \mapsto (\theta_j(x_j))_{j \in J}$$
is a topological embedding.
\end{lem}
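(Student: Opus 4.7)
The plan is to show that $\theta$ is injective and that the product topology on $\prod_{j\in J}X_j$ is already the initial topology with respect to the single map $\theta$; these two statements together say exactly that $\theta$ is a topological embedding. Injectivity is immediate from the injectivity of each $\theta_j$, so the substance lies in identifying the two topologies on $\prod_{j\in J}X_j$.

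To do this I would first verify continuity of $\theta$ via the universal property of $\prod_{j\in J}Y_j$: for each index $k$ we have $\pi_k^Y\circ\theta=\theta_k\circ\pi_k^X$, which is continuous, whence $\theta$ is continuous. This shows that the initial topology with respect to $\theta$ is coarser than the product topology on $\prod_{j\in J}X_j$. For the reverse inclusion, it suffices to check that every subbasic open set of the product topology is a preimage under $\theta$ of some open subset of $\prod_{j\in J}Y_j$. A typical subbasic open set has the form $(\pi_k^X)^{-1}(U_k)$ for open $U_k\subseteq X_k$. Since $\theta_k$ is a topological embedding, we may write $U_k=\theta_k^{-1}(V_k)$ for some open $V_k\subseteq Y_k$, and then
\[
(\pi_k^X)^{-1}(U_k)=(\theta_k\circ\pi_k^X)^{-1}(V_k)=(\pi_k^Y\circ\theta)^{-1}(V_k)=\theta^{-1}\bigl((\pi_k^Y)^{-1}(V_k)\bigr),
\]
with $(\pi_k^Y)^{-1}(V_k)$ open in $\prod_{j\in J}Y_j$. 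Since preimages commute with unions and finite intersections, the same conclusion holds for arbitrary open subsets of $\prod_{j\in J}X_j$.

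I do not expect a genuine obstacle; the argument is simply the transitivity of initial topologies (the initial topology of an initial topology is the initial topology of the composed family). The only minor point of care is to argue on subbasic sets, so that one never has to form a product $\prod_j V_j$ with infinitely many nontrivial factors; doing so would either require a cofinite support convention or an extra appeal to the product topology on the $Y_j$ side, both of which are avoided by working one coordinate at a time as above.
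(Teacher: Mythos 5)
Your argument is correct and complete. Note, however, that the paper does not actually prove this lemma: it is stated as a well-known fact with a reference to Engelking (Proposition 2.3.2), so there is no in-paper proof to compare against. Your route --- showing $\theta$ is injective and that the product topology on $\prod_{j\in J}X_j$ coincides with the initial topology induced by $\theta$, checked on subbasic sets $(\pi_k^X)^{-1}(U_k)$ with $U_k=\theta_k^{-1}(V_k)$ --- is a clean instance of the transitivity of initial topologies, and your remark about working one coordinate at a time (rather than forming box-like products $\prod_j V_j$) correctly avoids the usual pitfall for infinite index sets. The textbook proof one would find in Engelking instead factors $\theta$ through the canonical homeomorphism $\prod_j \theta_j(X_j)\cong$ (subspace of $\prod_j Y_j$) and uses that each $\theta_j\colon X_j\to\theta_j(X_j)$ is a homeomorphism; the two arguments are equivalent, and yours has the mild advantage of never invoking how subspace topologies interact with products, only the universal property of initial topologies. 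One small point worth making explicit if you write this up: the identity $U_k=\theta_k^{-1}(V_k)$ uses both that $\theta_k(U_k)$ is relatively open in $\theta_k(X_k)$ (so $\theta_k(U_k)=V_k\cap\theta_k(X_k)$ for some open $V_k\subseteq Y_k$) and the injectivity of $\theta_k$ to conclude $\theta_k^{-1}(V_k)=\theta_k^{-1}(\theta_k(U_k))=U_k$; as stated, ``we may write $U_k=\theta_k^{-1}(V_k)$'' slightly compresses this.
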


\begin{prop}\label{v-mfd-rs}
Let $M_1$ and $M_2$ be smooth manifolds (possibly with rough boundary) modelled on locally convex spaces, $r,s \in \N_0 \cup \{\infty \}$ and $F$ be a locally convex space. Then
\begin{compactenum}
 \item  $f^\vee \in C^{r}(M_1,C^s(M_2,F))$  for all $f \in C^{r,s}(M_1 \times M_2,F)$. 
 \item  The map
$$\Phi\colon C^{r,s}(M_1 \times M_2,F)\to C^r(M_1,C^s(M_2,F)), \; f \mapsto f^\vee $$
is linear and a topological embedding.
\end{compactenum}
\end{prop}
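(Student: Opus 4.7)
The strategy is to reduce both assertions to the local exponential law (Theorem \ref{vee-rs}) by pulling back along charts. Throughout, let $\varphi\colon U_\varphi\to V_\varphi$ and $\psi\colon U_\psi\to V_\psi$ range over charts in the maximal smooth atlases of $M_1$ and $M_2$ respectively, and write $\Phi_{\varphi,\psi}\colon C^{r,s}(V_\varphi\times V_\psi,F)\to C^r(V_\varphi,C^s(V_\psi,F))$ for the local version of $\Phi$ provided by Theorem \ref{vee-rs}.

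For part (a), let $f\in C^{r,s}(M_1\times M_2,F)$. By Definition \ref{ch-rs}, $f^\vee\colon M_1\to C^s(M_2,F)$ is $C^r$ precisely if $f^\vee\circ\varphi^{-1}\colon V_\varphi\to C^s(M_2,F)$ is $C^r$ for every chart $\varphi$. Since $C^s(M_2,F)$ carries the initial topology with respect to the continuous linear pullback maps $(\psi^{-1})^*\colon C^s(M_2,F)\to C^s(V_\psi,F)$, $g\mapsto g\circ\psi^{-1}$, and since a map into a locally convex space carrying the initial topology with respect to continuous linear maps is $C^r$ iff each component is (cf.\ \cite{gn07}), it suffices to check that $(\psi^{-1})^*\circ f^\vee\circ\varphi^{-1}$ is $C^r$ for every $\psi$. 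A direct pointwise calculation identifies this composition with $(f\circ(\varphi^{-1}\times\psi^{-1}))^\vee$, which is $C^r$ by Theorem \ref{vee-rs}(a) applied to the $C^{r,s}$-map $f\circ(\varphi^{-1}\times\psi^{-1})$.

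For part (b), linearity is immediate. To realize $\Phi$ as a topological embedding, consider the commutative square whose top arrow is $\Phi$, bottom arrow is $\prod_{\varphi,\psi}\Phi_{\varphi,\psi}$, left vertical arrow is $f\mapsto(f\circ(\varphi^{-1}\times\psi^{-1}))_{\varphi,\psi}$, and right vertical arrow is $g\mapsto((\psi^{-1})^*\circ g\circ\varphi^{-1})_{\varphi,\psi}$. By Definition \ref{ch-rs-in} the left vertical arrow is a topological embedding; each $\Phi_{\varphi,\psi}$ is an embedding by Theorem \ref{vee-rs}(b), so Lemma \ref{prod-emb} makes the bottom arrow an embedding. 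If the right vertical arrow is also a topological embedding, then commutativity of the square forces $\Phi$ to be one.

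The principal obstacle is exactly this embedding property of the right vertical arrow, i.e.\ the claim that the topology on $C^r(M_1,C^s(M_2,F))$ coincides with the initial topology with respect to the maps $g\mapsto(\psi^{-1})^*\circ g\circ\varphi^{-1}$ taking values in $C^r(V_\varphi,C^s(V_\psi,F))$. This decomposes into two standard facts from the cited references: the chart-wise initial topology description of $C^r(M_1,E)$ for any locally convex space $E$, and the continuity of the pushforward $((\psi^{-1})^*)_\ast\colon C^r(M_1,C^s(M_2,F))\to C^r(M_1,C^s(V_\psi,F))$ combined with the fact that $C^r$ transforms the initial topology on $C^s(M_2,F)$ (with respect to the $(\psi^{-1})^*$) into the initial topology on $C^r(M_1,C^s(M_2,F))$ (with respect to the pushforwards). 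Once this compatibility is in place, both assertions follow.
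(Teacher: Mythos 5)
Your proposal is correct and takes essentially the same route as the paper's own proof: part (a) is reduced chart-wise to Theorem \ref{vee-rs}(a) via the identification $(\psi^{-1})^*\circ f^\vee\circ\varphi^{-1}=(f\circ(\varphi^{-1}\times\psi^{-1}))^\vee$, and part (b) uses exactly the same commutative square, with the left vertical embedding coming from Definition \ref{ch-rs-in}, the bottom from Lemma \ref{prod-emb} and Theorem \ref{vee-rs}(b), and the right vertical arrow factored (as in the paper, where it is $\Xi\circ C^r(M_1,\Psi)$) into the chart-wise embedding of $C^r(M_1,\cdot)$ composed with the pushforward of the embedding $C^s(M_2,F)\hookrightarrow\prod_\psi C^s(V_\psi,F)$, citing the same standard facts from the references.
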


\begin{proof} (a) It is clear that $f^\vee(x)=f(x,\bullet)$ is a $C^s$-map $M_2\to F.$ It suffices to show that $f\circ \varphi^{-1}\colon U_\varphi \to C^s(M_2,F)$ is $C^r$ for each chart $\varphi\colon U_\varphi \to V_\varphi$ of $M_1.$ Let $\mathcal{A}_2$ be the maximal smooth atlas for $ M_2.$ Because the map $$\Psi \colon C^s(M_2,F) \to \prod_{\psi \in \mathcal{A}_2} C^s(U_\psi,F),\; h \mapsto (h \circ \psi^{-1})_{\psi \in \mathcal{A}_2}$$
is a linear topological embedding with closed image (see \cite{gn07}; cf.\ \cite[4.7 and Proposition 4.19(d)]{hg2004}). $f^\vee \circ \varphi^{-1}$ is $C^r$ if and only if $\Psi \circ f^\vee \circ \varphi^{-1}$ is $C^r$ (see \cite{gn07}; cf. \cite[Lemma 10.2]{bgn}), which holds if all components are $C^r$. Hence we only need that 
$$\theta\colon V_\varphi \to C^s(V_\psi,F), \; x \mapsto f^\vee (\varphi^{-1}(x)) \circ \psi^{-1}=(f \circ (\varphi^{-1} \times \psi^{-1}))^\vee (x)$$
is $C^r.$ But $\theta=(f \circ (\varphi^{-1} \times \psi^{-1}))^\vee$ where $f \circ (\varphi^{-1} \times \psi^{-1})\colon V_\varphi \times V_\psi \to F$ is $C^{r,s},$ hence $\theta$ is $C^r$ by Theorem \ref{vee-rs}.\newline
(b) It is clear that $\Phi $ is linear and injective. Because $\Psi$ is linear and a topological embedding, also 
$$C^r(M_1,\Psi)\colon C^r(M_1,C^s(M_2,F))\to C^r(M_1, \prod_{\psi \in \mathcal{A}_2}C^s (V_\psi,F)),\; f \mapsto \Psi \circ f$$
is a topological embedding \cite{gn07}.\newline
Let $P:=\prod_{\psi \in \mathcal{A}_2}C^s (V_\psi,F).$ The map 
$$\Xi\colon C^r (M_1,P) \to \prod_{\varphi \in \mathcal{A}_1}C^r(V_\varphi,P), \; f \mapsto (f \circ \varphi^{-1})_{\varphi \in \mathcal{A}_1}$$
is a linear topological embedding. Using the isomorphism $$\prod_{\varphi \in \mathcal{A}_1}C^r(V_\varphi,P) \cong \prod_{\varphi \in \mathcal{A}_1}\prod_{\psi \in \mathcal{A}_2}  C^r(V_\varphi,C^s (V_\psi,F))$$ we obtain a linear topological embedding 
\begin{align*}
 \Gamma:=\Xi \circ C^r(M_1,\Psi)\colon C^r(M_1,C^s(M_2,F)) \to  \prod_{\varphi \in \mathcal{A}_1}\prod_{\psi \in \mathcal{A}_2}  C^r(V_\varphi,C^s (V_\psi,F)),\\
	    f \mapsto (C^s(\psi^{-1},F) \circ f \circ \varphi^{-1})_{\substack {\varphi \in \mathcal{A}_1,\\ \psi \in \mathcal{A}_2 }} \quad \quad \quad \quad \quad \quad \quad \quad
\end{align*}
%It is the map 
%$$f \mapsto (C^s(\psi^{-1},F) \circ f \circ \varphi^{-1})_{\substack {\varphi \in \mathcal{A}_1,\\ \psi \in \mathcal{A}_2 }}$$
where $C^s(\psi^{-1},F)\colon  C^s(M_2,F) \mapsto C^s(V_\psi,F), \; f \mapsto f \circ \psi^{-1}.$ Also the map
$$\omega\colon C^{r,s} (M_1 \times M_2,F)\to \prod_{\substack {\varphi \in \mathcal{A}_1,\\ \psi \in \mathcal{A}_2 }}C^{r,s}(V_\varphi \times V_\psi,F),\, f \mapsto (f \circ (\psi^{-1} \times \varphi^{-1}))_{\substack {\varphi \in \mathcal{A}_1,\\ \psi \in \mathcal{A}_2 }}$$
is a topological embedding, by Definition \ref{ch-rs-in}. Now we have the commutative diagram.

$$
\begin{CD}
C^{r,s} (M_1 \times M_2,F) @> \Phi >> C^r (M_1,C^s(M_2,F)) \\
@VV \omega V @VV \Gamma  V\\
{\displaystyle \prod_{\substack {\varphi \in \mathcal{A}_1,\\ \psi \in \mathcal{A}_2 }}}C^{r,s}(V_\varphi \times V_\psi,F) @> \eta >>{\displaystyle \prod_{\substack {\varphi \in \mathcal{A}_1,\\ \psi \in \mathcal{A}_2 }}}C^{r}(V_\varphi, C^s( V_\psi,F))
\end{CD}
$$ 
\newline   
where $\eta$  is the map $(f_{\varphi,\psi})_{\varphi \in \mathcal{A}_1, \psi \in \mathcal{A}_2} \mapsto (f^\vee_{\varphi,\psi})_{\varphi \in \mathcal{A}_1, \psi \in \mathcal{A}_2 }$.
Because the vertical arrows are topological embeddings and also the horizontal  arrow at the bottom (by Lemma \ref{prod-emb} and Theorem \ref{vee-rs}) is a topological embedding, we deduce that the map $\Phi$ at the top has to be a topological embedding as well. Here, we used that open subsets of $k$-spaces are $k$-spaces.
\end{proof}

\begin{thm} \label{k-mfd-iso}
Let $M_1$ and $M_2$ be smooth manifolds (possibly with rough boundary) modelled on locally convex spaces $E_1$ and $E_2$ respectively, $F$ be a locally convex space and $r,s \in \N_0 \cup \{\infty \}$. Assume that $M_2$ is locally compact or that one of the following conditions is satisfied:
\begin{compactenum}
\item $r=s=0$ and $M_1 \times M_2$ is a $k$-space.
\item $r\geq 1, \; s=0$ and $M_1 \times M_2 \times E_1 $ is a $k$-space.
\item $r=0, \; s\geq 1$ and $M_1 \times M_2 \times E_2 $ is a $k$-space.
\item $r\geq 1, \; s\geq1$ and $M_1 \times M_2 \times E_1 \times E_2 $ is a $k$-space.
\end{compactenum}
Then 
$$\Phi \colon C^{r,s}(M_1\times M_2,F)\to C^r(M_1,C^s(M_2,F)), \; f \mapsto f^\vee$$
is an isomorphism of topological vector spaces. Moreover, a map $g\colon M_1 \to C^s(M_2,F)$ is $C^r$ if and only if
$$g^\wedge \colon M_1 \times M_2 \to F,\; g^\wedge (x,y):= g(x)(y)$$
is $C^{r,s}.$
\end{thm}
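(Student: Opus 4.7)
The strategy is to reduce to the local exponential law (Theorem \ref{kspace-iso}) chart by chart, exploiting that Proposition \ref{v-mfd-rs} already gives one half of everything needed. Indeed, by Proposition \ref{v-mfd-rs} the map $\Phi$ is linear and a topological embedding; consequently $\Phi$ will be an isomorphism of topological vector spaces as soon as it is surjective. Likewise, the stated biconditional ``$g \in C^r(M_1, C^s(M_2,F))$ iff $g^{\wedge} \in C^{r,s}(M_1 \times M_2,F)$'' has its ``$\Leftarrow$''-direction in Proposition \ref{v-mfd-rs}(a) (since $(g^{\wedge})^{\vee} = g$), so both remaining tasks coincide: given $g \in C^r(M_1, C^s(M_2, F))$, prove that $g^{\wedge}\colon M_1\times M_2 \to F$ is $C^{r,s}$.

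By Definition \ref{ch-rs}, it suffices to verify that for every pair of charts $\varphi\colon U_\varphi \to V_\varphi$ of $M_1$ and $\psi\colon U_\psi \to V_\psi$ of $M_2$, the localisation
\[
g^{\wedge}\circ (\varphi^{-1}\times \psi^{-1})\colon V_\varphi \times V_\psi \to F
\]
is $C^{r,s}$. A direct computation shows that this map equals $h^{\wedge}$, where
\[
h \coloneq C^s(\psi^{-1},F)\circ g\circ \varphi^{-1}\colon V_\varphi \to C^s(V_\psi,F),
\]
and $C^s(\psi^{-1},F)\colon C^s(M_2,F)\to C^s(V_\psi,F),\ \eta\mapsto \eta\circ \psi^{-1}$ is continuous linear (as used already in the proof of Proposition \ref{v-mfd-rs}). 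Since $g\circ \varphi^{-1}$ is $C^r$ and $C^s(\psi^{-1},F)$ is continuous linear, the map $h$ is $C^r$.

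The plan is then to invoke Theorem \ref{kspace-iso} for $h$ with the locally convex subsets $V_\varphi \subseteq E_1$ and $V_\psi \subseteq E_2$. The only point to check is that the relevant k-space (or local compactness) hypothesis descends from the global statement to $V_\varphi \times V_\psi$ (and, depending on the case, up to $V_\varphi \times V_\psi \times E_1 \times E_2$); this is the main technical step. For local compactness it is immediate, as $V_\psi$ is homeomorphic to the open subset $U_\psi \subseteq M_2$. For the k-space conditions, observe that $U_\varphi \times U_\psi$ is open in $M_1 \times M_2$, so $U_\varphi \times U_\psi \times E_1 \times E_2$ (respectively with $E_1$ or $E_2$ alone, or no factor) is an open subset of $M_1 \times M_2 \times E_1 \times E_2$ (respectively of the corresponding space from the hypothesis); since open subsets of k-spaces are k-spaces, and the chart maps deliver homeomorphisms onto $V_\varphi \times V_\psi \times E_1 \times E_2$, the required condition from Theorem \ref{kspace-iso} is satisfied. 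Theorem \ref{kspace-iso} then yields that $h^{\wedge}$ is $C^{r,s}$, completing the verification. Combining with Proposition \ref{v-mfd-rs} gives that $\Phi$ is an isomorphism of topological vector spaces and establishes the final biconditional.
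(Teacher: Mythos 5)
Your proposal is correct and follows essentially the same route as the paper: reduce to surjectivity via Proposition \ref{v-mfd-rs}, identify the chart localisation of $g^{\wedge}$ with $(C^s(\psi^{-1},F)\circ g\circ\varphi^{-1})^{\wedge}$, and apply Theorem \ref{kspace-iso} after noting that the $k$-space (or local compactness) hypotheses pass to the chart domains because open subsets of $k$-spaces are $k$-spaces. No gaps.
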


\begin{proof}
By Proposition \ref{v-mfd-rs}, we only need to show that $\Phi$ is surjective. To this end, Let $g \in  C^r(M_1,C^s(M_2,F))$ and define 
$$ f\coloneq g^\wedge \colon M_1 \times M_2 \to F,\; f (x,y)\coloneq g(x)(y).$$
Let $ \varphi$ and $ \psi$ be charts for $M_1$ and $M_2,$ respectively. Then 
$$f \circ (\varphi^{-1} \times \psi^{-1})\colon V_\varphi \times V_\psi \to F, \; (x,y) \mapsto (C^s(\psi^{-1},F)\circ g\circ \varphi^{-1})^\wedge (x,y)$$
with $C^s(\psi^{-1},F)\colon  C^s(M_2,F) \to  C^s(V_\psi,F), h \mapsto  h \circ \psi^{-1} $ continuous linear. Hence $C^s(\psi^{-1},F)\circ g \circ \varphi^{-1}\colon V_\varphi \to  C^s( V_\psi,F)$ is $C^r.$ Hence $f \circ (\varphi^{-1} \times \psi^{-1})$ is $C^{r,s}$  by the exponential law (Theorem \ref{kspace-iso}). \newline
\textit{Note.} In (d) $ V_\varphi \times V_\psi \times E_1 \times E_2$ is homeomorphic to the open subset  $U_\varphi \times U_\psi \times E_1\times E_2$ of the $k$-space $ M_1 \times M_2 \times E_1    \times E_2$ and hence a $k$-space. Similarly in (a), (b) and (c). Hence the Exponential Law (Theorem \ref{kspace-iso}) applies. If $M_2$ is locally compact, then the open subsets the $U_\psi$ are locally compact and hence also the $V_\psi.$ Again, the Exponential Law (Theorem \ref{kspace-iso}) applies.
\end{proof} 

To deduce a corollary, we use the following lemma.

\begin{lem} \label{cap-ksp}
Let $X$ be a Hausdorff topological space. If $X=\bigcup_{j \in J} V_j $ with open subsets $V_j \subseteq X$ which are $k$-spaces, then $X$ is a $k$-space.
\end{lem}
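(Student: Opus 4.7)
The plan is to use the standard characterization of k-spaces: a Hausdorff space $X$ is a k-space if and only if every subset $A\subseteq X$ satisfying the property that $A\cap K$ is closed in $K$ for every compact $K\subseteq X$ is itself closed in $X$. So I would fix such a subset $A\subseteq X$ and prove that $A$ is closed.

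The first step is to show, for each fixed $j\in J$, that $A\cap V_j$ is closed in $V_j$. Since $V_j$ is by hypothesis a k-space, it suffices to check that $(A\cap V_j)\cap K'$ is closed in $K'$ for every compact subset $K'\subseteq V_j$. But any such $K'$ is also compact in $X$ (compactness is intrinsic), and $(A\cap V_j)\cap K' = A\cap K'$, which is closed in $K'$ by the assumed property of $A$. Hence $A\cap V_j$ is closed in $V_j$, for every $j\in J$.

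The second step uses that the $V_j$ form an open cover of $X$. Writing
\[
X\setminus A \;=\; \bigcup_{j\in J}\bigl(V_j\setminus A\bigr) \;=\; \bigcup_{j\in J}\bigl(V_j\setminus (A\cap V_j)\bigr),
\]
each set $V_j\setminus (A\cap V_j)$ is open in $V_j$ (by Step~1), and therefore open in $X$, since $V_j$ itself is open in $X$. Hence $X\setminus A$ is a union of open subsets of $X$, so $A$ is closed in $X$. Combined with the initial characterization, this shows $X$ is a k-space.

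There is no real obstacle here; the only two points to watch are (i) invoking the correct subset-level characterization of k-spaces, and (ii) remembering that compact subsets of the open subspace $V_j$ automatically remain compact in the ambient space $X$, so the hypothesis on $A$ can be applied to them.
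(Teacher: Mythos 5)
Your proof is correct and follows essentially the same route as the paper's: restrict to each $V_j$, observe that compact subsets of $V_j$ are compact in $X$ so the k-space property of $V_j$ applies, and then use openness of $V_j$ in $X$ to globalize. The only cosmetic difference is that you work with the ``closed sets'' formulation of the k-space condition while the paper uses the complementary ``open sets'' formulation.
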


\begin{proof}
Let $W \subseteq X$ be a subset such that $W \cap K$ is relatively open in $K$ for each compact subset $K \subseteq X.$ We show that $W$ is open in $X.$ Since $W=\bigcup_{j \in J}(W \cap V_j),$ it suffices to show that each $V_j \cap W$ is open. For each compact subset $ K \subseteq V_j,\; K \cap(V_j \cap W)=K \cap W$ is relatively open  in $K$ by hypothesis, thus $V_j \cap W$ is open in $V_j,$ hence open in $X.$
\end{proof}

\begin{cor} \label{mm-iso}
Let $M_1$ and $M_2$ be smooth manifolds (possibly with rough boundary) modelled on locally convex spaces $E_1$ and $E_2$ respectively, $F$ be a locally convex space and $r,s \in \N_0 \cup \{\infty \}$. Assume that {\rm (a)}, {\rm (b)} or {\rm (c)} is satisfied:
\begin{compactenum}
 \item $M_2$ is a finite-dimensional manifold with corners.\footnote{In this case we have no further assumptions on the space $E_1$.}
 \item $E_1$ and $E_2$ are metrizable.
 \item $M_1$ and $M_2$ are manifolds with corners and both of $E_1$ and $E_2$ are hemicompact $k$-spaces.
\end{compactenum}
Then
$$\Phi \colon C^{r,s}(M_1\times M_2,F)\to C^r(M_1,C^s(M_2,F)), \; f \mapsto f^\vee$$
is an isomorphism of  topological vector spaces. Moreover, a map $g\colon M_1 \to C^s(M_2,F)$ is $C^r$ if and only if
$$g^\wedge \colon M_1 \times M_2 \to F,\; g^\wedge (x,y)\coloneq g(x)(y)$$
is $C^{r,s}.$
\end{cor}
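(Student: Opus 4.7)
The plan is to reduce each of the three cases to the corresponding case of Theorem~\ref{k-mfd-iso}; the concluding ``Moreover'' statement about $g^\wedge$ will then be inherited directly from the corresponding clause of that theorem.

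For case (a), a finite-dimensional manifold with corners is locally compact (its charts map onto relatively open subsets of a closed quadrant in $\R^n$, which is locally compact), so the ``$M_2$ locally compact'' hypothesis of Theorem~\ref{k-mfd-iso} applies directly, with no restriction on $M_1$ or $E_1$.

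For cases (b) and (c), the plan is to verify that $M_1\times M_2\times E_1\times E_2$ is a $k$-space. This suffices because each of the four $k$-space hypotheses in Theorem~\ref{k-mfd-iso} is implied by it: $M_1\times M_2$, $M_1\times M_2\times E_1$ and $M_1\times M_2\times E_2$ embed as closed subspaces of $M_1\times M_2\times E_1\times E_2$ (via zero sections in the omitted factors), and closed subspaces of $k$-spaces are $k$-spaces. To see that the four-fold product is a $k$-space, I would cover it by the open subsets $U_\varphi\times U_\psi\times E_1\times E_2$ for $(\varphi,\psi)\in\mathcal{A}_1\times\mathcal{A}_2$, and apply Lemma~\ref{cap-ksp}; each such piece is homeomorphic via the chart to $V_\varphi\times V_\psi\times E_1\times E_2\subseteq E_1\times E_2\times E_1\times E_2$.

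In case (b), $(E_1\times E_2)^2$ is metrizable as a finite product of metrizable spaces, hence every subspace is metrizable, hence a $k$-space. In case (c), the product of hemicompact $k$-spaces is a hemicompact $k$-space (as recalled in the introduction), so $(E_1\times E_2)^2$ is $k_\omega$, in particular a $k$-space. Since $M_1$ and $M_2$ are manifolds with corners, $V_\varphi$ is relatively open in a finite intersection $C_\varphi$ of closed half-spaces $\lambda^{-1}([0,\infty[)$ in $E_1$, which is a closed subset of $E_1$; similarly $V_\psi\subseteq C_\psi$ is relatively open in a closed subset of $E_2$. Thus $V_\varphi\times V_\psi\times E_1\times E_2$ is an open subset of the closed subset $C_\varphi\times C_\psi\times E_1\times E_2$ of the $k$-space $(E_1\times E_2)^2$, and so is itself a $k$-space, because both closed and open subspaces of $k$-spaces are $k$-spaces.

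The main obstacle is the argument in case (c), which crucially relies on $V_\varphi$ being relatively open in a \emph{closed} subset of $E_1$. This closedness is guaranteed by the corners hypothesis but would fail for manifolds with general rough boundary (whose chart images need only be locally convex subsets with dense interior of $E_1$), which explains why case (c) is restricted accordingly.
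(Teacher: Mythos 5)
Your proposal is correct and follows essentially the same route as the paper: case (a) via local compactness of finite-dimensional manifolds with corners, and cases (b) and (c) by covering $M_1\times M_2\times E_1\times E_2$ with chart-domain products and applying Lemma~\ref{cap-ksp}, using metrizability in (b) and the $k_\omega$-property together with the closed-quadrant structure of corner charts in (c). The only (welcome) addition is that you make explicit the reduction of the lower-order $k$-space hypotheses of Theorem~\ref{k-mfd-iso} to the four-fold product via closed subspaces, a step the paper leaves implicit.
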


\begin{proof}
(a) \textit{Case $M_2$ a finite-dimensional manifold with corners.} Let $M_2$ be of dimension $n.$ Then each point of $M_2$ has an open neighbourhood homeomorphic to an open subset  $V$ of $[0, \infty[^n.$ Hence $V$ is locally compact, thus $M_2$ is locally compact. Thus Theorem \ref{k-mfd-iso} applies.\newline
(b) \textit{Case $E_1,\, E_2$ metrizable.} Then all points $x \in M_1, y \in M_2$ have open neighbourhoods $U_1 \subseteq M_1,\, U_2 \subseteq M_2$ homeomorphic to subsets $V_1 \subseteq E_1$ and $V_2 \subseteq E_2,$ respectively. Since $V_1 \times V_2$ is metrizable, it follows that $U_1 \times U_2 \times E_1 \times E_2$ is metrizable and hence a $k$-space. Hence by Lemma \ref{cap-ksp} $M_1\times M_2 \times E_1 \times E_2$ is a $k$-space and Theorem \ref{k-mfd-iso} applies.\newline
(c) \textit{Case $E_1$ and $E_2$ are $k_\omega$-spaces $M_1$ and $M_2$ are manifolds with corners.} For all $x\in M_1$ and $y\in M_2,$ there are open neighbourhoods $U_1 \subseteq M_1$,\, $U_2 \subseteq M_2$  homeomorphic to open subsets $V_1$ and $V_2$, respectively, of finite intersections of closed half-spaces in $E_1$ and $E_2$, respectively. Hence $V_1\times V_2 \times E_1 \times E_2$ is (relatively) open subset of a closed subset of $E_1\times E_2 \times E_1 \times E_2.$ The latter product is $k_\omega$ since $E_1$ and $E_2$ are $k_\omega$-spaces (see \cite[Proposition 4.2(i)]{ghh}), and hence a $k$-space.

Since open subsets (and also closed subsets) of $k$-spaces are $k$-spaces, it follows that  $V_1 \times V_2 \times E_1 \times E_2$ is a $k$-space. Now Lemma \ref{cap-ksp} shows that $M_1\times M_2 \times E_1 \times E_2$ is a $k$-space, and thus Theorem \ref{k-mfd-iso} applies.   

\end{proof}
\noindent
{\bf Proof for the comments after Theorem B.}
All assertions are covered by Corollary \ref{mm-iso}, except for the case when $M_1,\, M_2$ are manifolds with corners and $E_1\times E_2 \times E_1 \times E_2$ is a $k$-space. But this case can be proved like the result for $k_\omega$-spaces in Corollary \ref{mm-iso}.

\begin{rem} If $s=0,$ then $C^{r,s}$-maps $f\colon U \times V \to F$ can be defined just as well if $V$ is any Hausdorff topological space (and $U\subseteq E_1$ as before).\newline
If $r=0,$ then $C^{r,s}$-maps $f \colon U \times V \to F$ make sense if $U$ is a Hausdorff topological space. All results carry over to this situation (with obvious modifications).
\end{rem}

\begin{rem}
If $F$ is a complex locally convex space, we obtain analogous results if $E_1$ is a locally convex space over $\K_1 \in \{\R,\C \}$, $E_2$ is a locally convex space over $\K_2 \in \{\R,\C \},$ and all directional derivatives in the first and second variable are considered as derivatives over the ground field $\K_1$ and $\K_2,$ respectively, the corresponding maps could be called $C^{r,s}_{\K_1,\K_2}$-maps.
\end{rem}

\section{Parameter dependent differential equations} \label{sect: ODE}

To prove that diffeomorphism groups are regular Lie groups, one would like to solve certain differential equations depending on a parameter. Unfortunately, the parameter varies in a Fr\'{e}chet space, hence the usual theory of parameter dependent ODE's in Banach spaces does not suffice. We consider ODE's in a Banach space which depend on parameter sets in a locally convex space. In this setting, existence and uniqueness results are well known (see e.g., \cite[Section 10]{hg2006}). Our aim is to improve the differentiability properties of the flow if the right hand side of the ODE is a $C^{r,\infty}$-mapping. In the literature, the associated flows are mostly studied if the differential equation has a right hand side of class $C^r$.
For the reader's convenience we recall two facts, the first of which is a special case of {\cite[Theorem D]{hg2006}}:

\begin{lem}\label{lem: para:Cr}
 Let $(E,\norm{\cdot})$ be a Banach space, $U \subseteq E$ open and $Z$ be a locally convex space. Let $P \subseteq Z$ be open, $r\in \N_0 \cup \set{\infty}$ and $f \colon P \times U \rightarrow U$ be a $C^r$-map, such that the Lipschitz constants of $f$ in $U$ satisfy $\sup_{p \in P} \Lip (f(p,\bullet)) < 1$. Assume that for each $p \in P$, there is a fixed point $x_p$ of $f_p := f(p,\bullet) \colon U \rightarrow U$. Then $\varphi \colon P \rightarrow U, \varphi (p) := x_p$ is a $C^r$-map. 
\end{lem}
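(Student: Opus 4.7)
My plan is to deduce the claim directly from \cite[Theorem D]{hg2006}, which is the general parameter-dependent-contraction theorem (parameter in a locally convex space, unknown in a Banach space): the statement given here is explicitly marked as a special case of that result, so it suffices to check the hypotheses. Concretely, $f \colon P \times U \to U$ is $C^r$ with $f(P \times U) \subseteq U$, the uniform Lipschitz bound $\sup_{p \in P} \Lip(f(p, \bullet)) < 1$ is our assumption, and a fixed point $x_p$ of $f_p$ is assumed to exist for every $p$. These are exactly the conditions of Theorem D, which then returns $\varphi \in C^r(P, U)$.

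\textbf{Sketch of the underlying strategy, should one want to see the ideas.} For continuity at $p_0 \in P$, I would combine the triangle inequality with the uniform Lipschitz constant $L \coloneq \sup_p \Lip(f(p, \bullet)) < 1$ to get
\[
(1 - L)\|\varphi(p) - \varphi(p_0)\| \leq \|f(p, \varphi(p_0)) - f(p_0, \varphi(p_0))\|,
\]
and conclude by continuity of $f$ in its first argument. For the $C^1$-case ($r \geq 1$), I would differentiate the fixed-point identity $\varphi(p) = f(p, \varphi(p))$ formally to obtain
\[
d\varphi(p, h) = d^{(1,0)} f(p, \varphi(p), h) + d^{(0,1)} f(p, \varphi(p), d\varphi(p, h)).
\]
Since $\|d^{(0,1)} f(p, x, \bullet)\|_{\mathrm{op}} \leq L < 1$ on all of $P \times U$, the operator $\id_E - d^{(0,1)} f(p, \varphi(p), \bullet)$ is invertible by a Neumann series, forcing
\[
d\varphi(p, h) = \bigl(\id_E - d^{(0,1)} f(p, \varphi(p), \bullet)\bigr)^{-1} \, d^{(1,0)} f(p, \varphi(p), h),
\]
which is a continuous expression in $(p, h)$. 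For $r \geq 2$ one iterates this identity, exploiting that $A \mapsto (\id_E - A)^{-1}$ is smooth on the open unit ball of the Banach algebra of bounded operators on $E$.

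\textbf{Main obstacle.} The one genuinely non-routine point is that $P$ lives in a general locally convex space while $U$ lies in the Banach space $E$, so a classical Banach-space implicit function theorem does not apply to $g(p, x) \coloneq x - f(p, x)$. Managing this asymmetry, and in particular the continuous (indeed $C^r$) dependence of the inverse $(\id_E - d^{(0,1)} f(p, \varphi(p), \bullet))^{-1}$ on $p$, is precisely the substantive content of \cite[Theorem D]{hg2006}. Once that result is quoted, no further work is required.
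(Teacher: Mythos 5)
Your proposal is correct and takes the same route as the paper: the paper gives no proof of this lemma at all, introducing it only as a recalled fact that ``is a special case of \cite[Theorem D]{hg2006}'', which is precisely the reduction you make. Your additional sketch of the underlying contraction/Neumann-series argument goes beyond what the paper records, but the essential step --- verifying the hypotheses and quoting Theorem D --- is identical.
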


\begin{lem}\label{lem: pf:Ck}
 Let $K$ be a compact manifold (possibly with boundary), $E,F$ locally convex spaces, $r,s \in \N_0 \cup \set{\infty}$ and $U \subseteq E$ open. Consider a $C^{0,s}$-mapping $f \colon K \times U \rightarrow F$ such that $d^{(0,j)} f \colon K\times (U \times E^j) \rightarrow F$ is a $C^r$-map for all $j \in \N_0$ with $j \leq s$. Then $f_* \colon C^r(K,U) \rightarrow C^r(K,F), \gamma \mapsto f \circ (\id_K , \gamma)$ is of class $C^s$.
\end{lem}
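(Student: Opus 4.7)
My plan is to prove the claim by induction on $s$, with the key observation that $d^{(0,1)}f\colon K\times(U\times E)\to F$ again satisfies the hypotheses of the lemma, but with $(r,s)$ replaced by $(r,s-1)$: it is $C^{0,s-1}$ by the analogue of Lemma \ref{rn-n1} (with compact base $K$ in place of the first factor), and $d^{(0,j)}(d^{(0,1)}f)=d^{(0,j+1)}f$ is $C^r$ for each $j\leq s-1$ by hypothesis. The engine of the induction will be the formula
\[
 df_*(\gamma,\eta)=d^{(0,1)}f\circ(\id_K,\gamma,\eta),\qquad \gamma\in C^r(K,U),\ \eta\in C^r(K,E).
\]

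For the base case $s=0$, the hypothesis reduces to $f$ being $C^r$ on $K\times U$. For $\gamma\in C^r(K,U)$, the map $(\id_K,\gamma)\colon K\to K\times U$ is $C^r$, so $f_*(\gamma)\in C^r(K,F)$ by the classical chain rule. Continuity of $f_*$ reduces, via the initial topology on $C^r(K,F)$, to continuity of $\gamma\mapsto d^{(k)}f_*(\gamma)$ for each $k\leq r$; by Schwarz' Theorem and the iterated chain rule, each such derivative is a polynomial expression in derivatives of $\gamma$ composed with iterated derivatives of $f$, and continuity then follows from uniform continuity of these derivatives on compact subsets (using compactness of $K$).

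For the induction step $s\geq 1$, I would use $C^r(K,U\times E)\cong C^r(K,U)\times C^r(K,E)$ and apply the induction hypothesis to $d^{(0,1)}f$ to conclude that
\[
G\colon C^r(K,U)\times C^r(K,E)\to C^r(K,F),\quad (\gamma,\eta)\mapsto d^{(0,1)}f\circ(\id_K,\gamma,\eta)
\]
is $C^{s-1}$. If I can further show that $f_*$ is $C^1$ with $df_*=G$, the standard characterisation (``$F$ is $C^s$ iff $F$ is $C^1$ and $dF$ is $C^{s-1}$'' from Section 2) completes the induction.

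The main technical obstacle is showing that the difference quotient $t^{-1}(f_*(\gamma+t\eta)-f_*(\gamma))$ converges to $G(\gamma,\eta)$ in the $C^r(K,F)$-topology, and not merely pointwise on $K$. My plan is to rewrite the quotient via the Mean Value Theorem as
\[
t^{-1}\bigl(f(x,\gamma(x)+t\eta(x))-f(x,\gamma(x))\bigr)=\int_0^1 d^{(0,1)}f\bigl(x,\gamma(x)+\sigma t\eta(x),\eta(x)\bigr)\,d\sigma,
\]
and then apply $d^{(k)}$ in the $K$-variable for each $k\leq r$. Each resulting $d^{(k)}$ is, by the chain rule and Schwarz' Theorem, a polynomial in derivatives of $\gamma$ and $\eta$ composed with iterated $d^{(i,j)}$-derivatives of $d^{(0,1)}f$, all of which are jointly continuous. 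A Wallace-Lemma argument on the compact set $K$, in the style of the proof of Theorem \ref{vee-rs}(a), then yields uniform convergence as $t\to 0$ of every $d^{(k)}$ of the difference quotient to the corresponding derivative of $G(\gamma,\eta)$. Continuity of $df_*=G$ is then automatic from the induction hypothesis, closing the inductive step.
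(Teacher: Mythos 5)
Your argument is correct in outline, but note that the paper does not actually carry out any such argument: its entire proof of Lemma \ref{lem: pf:Ck} is the citation \cite[Proposition 3.10]{hg2002} together with the remark that the proof there carries over to manifolds with boundary. What you have written is essentially a reconstruction of that cited proof -- induction on $s$, the derivative formula $df_*(\gamma,\eta)=(d^{(0,1)}f)_*(\gamma,\eta)$ under the identification $C^r(K,U\times E)\cong C^r(K,U)\times C^r(K,E)$, the Mean Value Theorem plus a compactness argument to upgrade pointwise convergence of difference quotients to convergence in the $C^r$-topology, and the characterisation ``$C^1$ with $C^{s-1}$ differential implies $C^s$'' to close the induction -- so the substance is sound. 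Two points deserve care if you write it out in full. First, the identity $d^{(0,j)}(d^{(0,1)}f)=d^{(0,j+1)}f$ is not literally true: since $d^{(0,1)}f$ lives on $K\times(U\times E)$, its $j$-th derivative in directions $(u_1,w_1),\dots,(u_j,w_j)\in E\times E$ is, by linearity in the last slot, the sum $d^{(0,j+1)}f(x,y,v,u_1,\dots,u_j)+\sum_i d^{(0,j)}f(x,y,w_i,u_1,\dots,\widehat{u_i},\dots,u_j)$; each summand is a $d^{(0,m)}f$ composed with a continuous linear map and hence $C^r$, so the induction hypothesis does apply to $d^{(0,1)}f$, but the verification should be phrased this way (it is exactly the computation in the completion of the proof of Lemma \ref{rn-n1}). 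Second, since $K$ is a manifold, the topology on $C^r(K,F)$ and the operators $d^{(k)}$ in your convergence argument must be interpreted through charts; here it helps to observe that a compact manifold is necessarily finite-dimensional, so the chart domains are locally compact and the Wallace-Lemma argument in the style of the proof of Theorem \ref{vee-rs} goes through on compact subsets of each chart. Neither point is a genuine gap, only bookkeeping; your proof has the advantage of making the paper self-contained and of making visible exactly which hypotheses on $f$ (namely, joint $C^r$-regularity of each $d^{(0,j)}f$) enter at each stage.
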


\begin{proof}
 For manifolds without boundary a proof may be found in \cite[Proposition 3.10]{hg2002}. It is easy to see that the proof carries over without any changes to the general case.
\end{proof}

\begin{lem}\label{lem: parapf:Ck}
 Let $K$ be a compact manifold, $E,F,Z$ locally convex spaces, $r,s \in \N_0 \cup \set{\infty}$ and $U \subseteq E$, $P \subseteq Z$ open subsets. Consider a $C^{0,s}$-mapping 
    \begin{displaymath}
     f \colon K \times (U\times P) \rightarrow F
    \end{displaymath}
  such that $d^{(0,j)} f \colon K \times (U\times P) \times (E \times Z)^j \rightarrow F$ is a $C^r$-map for all $j \in \N_0$ such that $j \leq s$. Then 
  \begin{displaymath}
   \varphi \colon C^r(K,U) \times P \rightarrow C^r (K,F), (\gamma , p) \mapsto f(\bullet,p)_* (\gamma)
  \end{displaymath}
 is of class $C^s$.
\end{lem}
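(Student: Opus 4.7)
The strategy is to reduce Lemma~\ref{lem: parapf:Ck} to Lemma~\ref{lem: pf:Ck} by absorbing the parameter $p \in P$ into the variable of the pushforward operator. Regarding $f$ as a map on $K \times (U \times P)$ with $U \times P \subseteq E \times Z$, the hypotheses of Lemma~\ref{lem: pf:Ck} hold verbatim: $f$ is $C^{0,s}$ and $d^{(0,j)} f \colon K \times (U \times P) \times (E \times Z)^j \to F$ is $C^r$ for every $j \leq s$, both by assumption. Lemma~\ref{lem: pf:Ck}, applied with $E$ replaced by $E \times Z$ and $U$ replaced by $U \times P$, therefore delivers, as a first intermediate step, that
\[
  f_* \colon C^r(K, U \times P) \to C^r(K, F), \quad \eta \mapsto f \circ (\id_K, \eta)
\]
is of class $C^s$.

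Next I would build a smooth auxiliary map $\Psi \colon C^r(K, U) \times P \to C^r(K, U \times P)$, $(\gamma, p) \mapsto \bigl(k \mapsto (\gamma(k), p)\bigr)$. Under the canonical topological linear isomorphism $C^r(K, U \times P) \cong C^r(K, U) \times C^r(K, P)$, the map $\Psi$ becomes $(\gamma, p) \mapsto (\gamma, \mathrm{const}_p)$. The constant-embedding $\iota \colon Z \to C^r(K, Z)$, $z \mapsto \mathrm{const}_z$, is continuous linear---this is immediate from the description of the compact-open $C^r$-topology recalled in Definition~\ref{defn: CrTop}, since every higher differential of a constant map vanishes---and restricts to a smooth map $P \to C^r(K, P)$. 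Hence $\Psi$ is the restriction of a continuous linear map $C^r(K, E) \times Z \to C^r(K, E) \times C^r(K, Z) \cong C^r(K, E \times Z)$, and in particular $\Psi$ is smooth.

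By construction $\varphi = f_* \circ \Psi$, so the chain rule for $C^s$-maps in the sense of Keller (Lemma~\ref{chain-2}, or directly \cite{gn07}) writes $\varphi$ as the composition of a $C^s$-map with a smooth map, whence $\varphi \in C^s$. The only nontrivial step is the first one, which is essentially an invocation of Lemma~\ref{lem: pf:Ck} with the parameter space absorbed into the target $U \times P$. The remainder is purely formal bookkeeping about constant curves and the product structure of $C^r(K, \cdot)$, so I do not expect any serious obstacle beyond checking consistently that $E \times Z$ (and not just $E$) plays the role of the ambient modelling space throughout.
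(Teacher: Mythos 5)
Your argument is correct and coincides with the paper's own proof: both apply Lemma~\ref{lem: pf:Ck} with $E\times Z$ as ambient space and $U\times P$ as domain to get that $f_*\colon C^r(K,U\times P)\to C^r(K,F)$ is $C^s$, then precompose with the smooth map $(\gamma,p)\mapsto(\gamma,\theta(p))$ built from the continuous linear constant-embedding $\theta\colon Z\to C^r(K,Z)$ under the identification $C^r(K,U\times P)\cong C^r(K,U)\times C^r(K,P)$. No differences worth noting.
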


\begin{rem}
 The preceding lemma is closely connected to $C^{r,s}$ mappings. To emphasize this connection consider the following special cases: 
    \begin{compactenum}
     \item If $r=0$, then any $C^{0,s}$-map $f$ satisfies the requirements of \ref{lem: parapf:Ck}, hence \\ $\varphi \colon C(K,U)\times P \rightarrow C(K,F)$ is of class $C^s$. 
     \item If $k \geq r+s$ then any $C^{r,k}$-mapping $f$ satisfies the requirements, hence \\ $\varphi \colon C^r (K,U) \times P \rightarrow C^r(K,F)$ is a $C^s$-map.
    \end{compactenum}
\end{rem}

\begin{proof}[Proof of Lemma \ref{lem: parapf:Ck}]
 Denote by $\theta \colon Z \rightarrow C^r (K,Z)$ the continuous linear map which associates to each $p \in Z$ the mapping which takes $p$ as its only value. From Lemma \ref{lem: pf:Ck} we deduce that $f_* \colon C^r (K,U \times P) \rightarrow C^r (K,F)$ is a $C^s$-map. Taking the canonical isomorphism, we identify $C^r(K,U\times P)$ with $C^r (K,U) \times C^r (K,P)$ and obtain $\varphi (\gamma, p) = f_* (\gamma ,  \theta (p))$. Hence $\varphi$ is a $C^s$-map, as desired.
\end{proof}

The general setting we shall be working in is as follows:

\begin{setup}\label{setup1} Let $(E,\norm{\cdot})$ be a Banach space, $r\in \N_0 \cup \set{\infty}$, $s,k \in \N \cup \set{\infty}$ with $k \geq r+s$. Furthermore let $J \subseteq \R$ be a non degenerate interval, $F$ be a locally convex space and $P \subseteq F, U \subseteq E$ be open subsets. Consider a map $f \colon J \times (U \times P) \rightarrow E$ which is $C^{r,k}$ with respect to $J\times (U\times P)$.
\end{setup} 

We prove a parameter-dependent version of the Picard Lindelöf theorem (cf.\ also \cite[Theorem 10.3]{hg2006}):

\begin{thm}\label{thm: A:ODES} 
 In the setting of \ref{setup1}, choose $t_0 \in J$, $x_0 \in U$ and $p_0 \in P$. Then there exist a convex neighbourhood $J_0 \subseteq J$ of $t_0$ and open neighbourhoods $U_0 \subseteq U$ of $x_0$ and $P_0 \subseteq P$ of $p_0$ such that for all $(\tau_0,y_0,q_0) \in J_0 \times U_0 \times P_0$, the initial value problem 
  \begin{equation}\label{eq: ODE}
   \begin{cases}
    x' (t)     &= f(t,x(t),q_0) \\
    x (\tau_0) &= y_0
   \end{cases}
  \end{equation}
 has a unique solution $\varphi = \varphi_{\tau_0,y_0,q_0} \colon J_0 \rightarrow U$. Furthermore, the map
  \begin{displaymath}
   \Phi \colon J_0 \times J_0 \times (U_0 \times P_0) \rightarrow E, (\tau_0,t,(y_0,q_0)) \mapsto \varphi_{\tau_0,y_0,q_0} (t) 
  \end{displaymath}
 yields $C^{r+1,s}$-maps $\Phi_{\tau_0} \coloneq \Phi (\tau_0 , \bullet) \colon J_0 \times (U_0 \times P_0) \rightarrow E$ for each $\tau_0 \in J_0$.
 \end{thm}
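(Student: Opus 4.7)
The plan is to realise $\varphi_{\tau_0,y_0,q_0}$ as a parametrised fixed point of a Picard operator, derive $C^s$-dependence on $(y_0,q_0)$ from Lemma \ref{lem: para:Cr}, bootstrap the time regularity using the ODE, and conclude with the exponential law. Using continuity of $f$ and of $d^{(0,1)}f$ (which exists as $k\geq 1$), I shrink the data to obtain a compact convex neighbourhood $J_0\subseteq J$ of $t_0$, open balls $U_0\subseteq U_1\subseteq U$ around $x_0$ with $U_1$ the $\rho$-ball, and an open $P_0\ni p_0$, such that $f$ is bounded by $M$ and Lipschitz of constant $L$ in the middle argument on $J_0\times U_1\times P_0$, with $|J_0|L<1/2$ and $\sup_{y_0\in U_0}\|y_0-x_0\|+|J_0|M<\rho$. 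Fixing $\tau_0\in J_0$, the Banach space $X\coloneq C(J_0,E)$ and its open subset $V\coloneq\{\gamma\in X:\gamma(J_0)\subseteq U_1\}$ host the Picard operator
\[
T\colon(U_0\times P_0)\times V\to V,\quad T((y_0,q_0),\gamma)(t)=y_0+\int_{\tau_0}^{t} f(\sigma,\gamma(\sigma),q_0)\,d\sigma,
\]
which is a contraction in $\gamma$ of constant $<1/2$. Each $T((y_0,q_0),\cdot)$ thus has a unique fixed point $\varphi_{y_0,q_0}\in V$, the unique IVP solution $J_0\to U_1$; uniqueness among all solutions $J_0\to U$ follows by standard local Lipschitz/Gronwall arguments.

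\textbf{$C^s$-dependence on $(y_0,q_0)$.} To feed Lemma \ref{lem: para:Cr} the parameter space $U_0\times P_0\subseteq E\times F$, I decompose $T((y_0,q_0),\gamma)=c(y_0)+I(T_2(\gamma,q_0))$, where $c\colon E\to X$ is the continuous linear inclusion as constant functions, $I\colon X\to X$ the continuous linear antiderivative $\tilde\gamma\mapsto[t\mapsto\int_{\tau_0}^{t}\tilde\gamma(\sigma)\,d\sigma]$, and $T_2(\gamma,q_0)(\sigma)=f(\sigma,\gamma(\sigma),q_0)$. By Lemma \ref{lem: parapf:Ck} applied with $K=J_0$ and its differentiability index equal to $0$, $T_2\colon V\times P_0\to X$ is $C^s$: indeed $f|_{J_0\times(U_1\times P_0)}$ is $C^{0,s}$ since $k\geq s$, and each $d^{(0,j)}f$ is $C^{r,k-j}$ with $k-j\geq r\geq 0$, hence continuous. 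Thus $T$ is $C^s$, and Lemma \ref{lem: para:Cr} yields that $g\colon(y_0,q_0)\mapsto\varphi_{y_0,q_0}$ is a $C^s$-map $U_0\times P_0\to C(J_0,E)$.

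\textbf{Bootstrap and exponential law.} I next upgrade $g$ by induction on $m\in\{0,\ldots,r\}$ to a $C^s$-map $U_0\times P_0\to C^{m+1}(J_0,E)$. Lemma \ref{lem: cr:emb} identifies $C^{m+1}(J_0,E)$ with a closed topological subspace of $C(J_0,E)\times C^m(J_0,E)$ via $\gamma\mapsto(\gamma,\gamma')$, so it suffices to show $(y_0,q_0)\mapsto\varphi'_{y_0,q_0}$ is $C^s$ into $C^m(J_0,E)$. From the ODE, $\varphi'_{y_0,q_0}=T_2^{(m)}(\varphi_{y_0,q_0},q_0)$, where $T_2^{(m)}$ denotes the analogous pushforward into $C^m(J_0,E)$. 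Lemma \ref{lem: parapf:Ck} with differentiability index $m$ gives that $T_2^{(m)}$ is $C^s$, since for $j\leq s$ the map $d^{(0,j)}f$ is $C^{r,k-j}$ with $r\geq m$ and $k-j\geq r\geq m$, hence $C^{m,m}$ and thus $C^m$ by Lemma \ref{lem: C11:C1}; composing with the induction hypothesis closes the step. For $r=\infty$, pass to the projective limit $C^\infty=\varprojlim C^m$. Finally, $J_0$ is locally compact, so Theorem \ref{kspace-iso}(a) applied to $g\in C^s(U_0\times P_0,C^{r+1}(J_0,E))$ yields $g^\wedge\in C^{s,r+1}((U_0\times P_0)\times J_0,E)$; swapping factors via Corollary \ref{rs-sr} gives the asserted $C^{r+1,s}$-regularity of $\Phi_{\tau_0}$. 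The same $J_0,U_0,P_0$ work for every $\tau_0\in J_0$, because the contraction estimate $|t-\tau_0|L\leq|J_0|L<1/2$ is insensitive to $\tau_0$. The main obstacle is the bootstrap, which relies essentially on the reserve $k\geq r+s$ to absorb one derivative of $f$ at each upgrade of the target's time regularity.
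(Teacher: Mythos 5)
Your proposal is correct and follows essentially the same route as the paper's proof: a parametrised Picard operator handled by Banach's fixed-point theorem, $C^s$-dependence of the fixed point via Lemma \ref{lem: para:Cr} after establishing $C^s$-smoothness of the operator through Lemma \ref{lem: parapf:Ck}, the bootstrap of time-regularity by induction using the embedding of Lemma \ref{lem: cr:emb}, and the final passage through Theorem \ref{kspace-iso}(a) and Corollary \ref{rs-sr}. The only (cosmetic) difference is that the paper translates the unknown by $y_{0}$ so as to work in the closed subspace of curves vanishing at $\tau_{0}$ and feeds $y_{0}$ into the right-hand side as an extra parameter, whereas you keep the untranslated operator and absorb $y_{0}$ through the continuous linear inclusion of constants; both devices serve the same purpose, and your condition $\sup\norm{y_{0}-x_{0}}+\lvert J_{0}\rvert M<\rho$ supplies the invariant closed ball on which the contraction theorem actually applies.
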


\begin{proof}
 Consider the continuous mapping 
  \begin{displaymath}
   \omega \colon J \times U \times P \times E \rightarrow E , (t,x,p,h) \mapsto d^{(0,1)} f (t,x,p; (h,0))
  \end{displaymath}
 which satisfies $\omega (t_0,x_0,p_0,0)=0$. We deduce that there is a connected neighbourhood $I$ of $t_0$ in $J$, an open neighbourhood $Q \subseteq P$ of $p_0$ and $R >0$ such that the following holds: The open ball $B_{2R}^E (x_0)$ is contained in $U$ and $\omega (I \times B_{2R}^E (x_0) \times Q \times B_R^E (0)) \subseteq B_1^E (0)$. This yields the estimate \begin{equation}\label{eq: Lip}
          \norm{\omega (t,y,p, \bullet)}_{\text{op}} \leq \frac{1}{R} \equalscolon L        \quad \forall (t,y,p) \in I \times B_{2R}^E (x_0) \times Q   .                                                                                                                                                                                                                                                                                                         
 \end{equation}
 As $B_{2R}^E (x_0)$ is convex, equation \eqref{eq: Lip} proves $\Lip f(t,\bullet , p)|_{B_{2R}^E (x_0)} \leq L , \forall (t,p) \in I \times Q$, i.e. a Lipschitz condition in the Banach space component. By shrinking $R$ and $I$ and choosing an open neighbourhood $P_0 \subseteq Q$ of $p_0$ we may assume that $I$ is compact and $f(I \times B_{2R}^E (x_0) \times P_0) \subseteq B_M^E (0)$ holds for some $M \in ]0,\infty[$. Choose $\varepsilon > 0$ with $\varepsilon < \min \set{\tfrac{R}{M} , \tfrac{1}{L}}$ and define $J_0 \coloneq \set{t \in I \colon \lvert t-t_0 \rvert \leq \ve}$, $U_0 \coloneq B_{R}^E (x_0)$. Observe that $J_0$ is a compact set. Our arguments will yield solutions for each $\tau_0 \in J_0$. To ease notation choose and fix $\tau_0 \in J_0$. Now 
  \begin{displaymath}
   g \colon J_0 \times (B_R^E (0) \times U_0 \times P_0) \rightarrow E , g(t,x,y,p) \coloneq f(t,x+y,p)
  \end{displaymath}
 is a $C^{r,k}$-mapping by the Chain Rule \ref{chain-1}. Clearly $\varphi$ will solve the initial value problem \eqref{eq: ODE} if and only if $\Psi \coloneq \varphi - y_0$ solves \begin{equation}\label{eq: ODE2}
          \begin{cases}
           \Psi' (t) = g(t,\Psi (t),y_0,q_0) \\
	   \Psi (\tau_0) =0
          \end{cases}.
 \end{equation}
 By the Fundamental Theorem of Calculus for continuous $\Psi \colon J_0 \rightarrow  B_R^E (0)$, the initial value problem \eqref{eq: ODE2} is equivalent to the integral equation 
    \begin{equation}\label{eq: int}
     \forall t \in J_0 \quad \Psi (t) = \int_{\tau_0}^t g(s,\Psi (s),y_0,q_0)\, ds.
    \end{equation}
  Now $X \coloneq \setm{\gamma \in C (J_0,E)}{\gamma (\tau_0) =0}$ is a closed vector subspace of the Banach space $C(J_0,E)$, hence a Banach space. The map  
  \begin{displaymath}
		h \colon U_0 \times P_0 \times B_R^X (0) \rightarrow X , h(y,p,\gamma) (t) \coloneq \int_{\tau_0}^t g (s,\gamma (s),y,p)\, ds
  \end{displaymath}
  is well defined by the Fundamental Theorem. Indeed $\im (h) \subseteq B_R^X (0)$ holds, as 
    \begin{align}\label{eq: est}
      \sup_{t \in J_0} \norm{h(y,p,\gamma) (t)} &\leq \lvert t -\tau_0\rvert \sup_{s \in J_0} \norm{g(s,\gamma (s), y,p)}  \notag\\
						&\leq \ve \sup_{s \in J_0} \norm{f(s,y + \gamma (s),p)} \leq \ve M < R  .
    \end{align}
  We want to apply Banach's Contraction Theorem, hence $h$ has to define a uniform family of contractions, i.e.\ $\sup_{(y,p) \in B_R^E(x_0) \times P_0} \Lip (h(y,p,\bullet)) < 1$. To prove this we compute
    \begin{align*}
     \norm{h (y,p,\gamma_1)(t) - h(y,p,\gamma_0) (t)} &= \norm{\int_{\tau_0}^t g(s,\gamma_1 (s),y,p) - g(s,\gamma_0 (s), y,p)\,ds} \\
						      &= \norm{\int_{\tau_0}^t f(s,y+\gamma_1 (s),p)-f(s,y+\gamma_0 (s),p)\,ds}\\
						      &\leq \lvert t - \tau_0 \rvert \cdot L \cdot \sup_{s \in J_0} \norm{\gamma_1 (s) - \gamma_0 (s)} \leq \ve L \norm{\gamma_1 - \gamma_0}_{\infty}
    \end{align*}
  and observe that by choice $\ve L < 1$ holds. From equation \eqref{eq: est} we derive the estimate $h(U_0 \times P_0 \times \overline{B_{\ve M}^X (0)}) \subseteq \overline{B_{\ve M}^X (0)}$. As $\overline{B_{\ve M}^X (0)}$ is a complete metric space, Banach`s Contraction Theorem shows that there is a unique fixed point $\Psi_{\tau_0,x_0,p_0} \in \overline{B_{\ve M}^X (0)}$ of $h(x_0,p_0, \bullet)$. We may view $\Psi_{\tau_0,x_0,p_0}$ as an element of $B_R^X (0)$. Retracing our steps, equation \eqref{eq: int} implies that $\Psi_{\tau_0,x_0,p_0}$ is the unique solution to \eqref{eq: ODE2} and $\varphi_{\tau_0,x_0,p_0} (\bullet) \coloneq \Psi_{\tau_0,x_0,p_0} (\bullet) + x_0$ is a solution to \eqref{eq: ODE}. 
\paragraph{} By the above, the existence of $\Phi$ on $J_0 \times J_0 \times U_0 \times P_0$ is clear. We have to prove the differentiable dependence of the solution on time, initial value and its parameter for fixed $\tau_0$. To achieve this, we check that $h$ is a $C^s$-map. The map $g$ is of class $C^{r, k}$ on $J_0 \times (B_R (0) \times B_R (x_0) \times P_0)$ with $J_0$ compact. Interpreting $B_R^E(x_0) \times P$ as a set of parameters, Lemma \ref{lem: parapf:Ck} shows that  
    \begin{displaymath}
     \Gamma \colon U_0 \times P_0 \times C(J_0,B_R^E (0)) \rightarrow C(J_0,E) , (y,p,\gamma) \mapsto g(\bullet , y,p)_* (\gamma )
    \end{displaymath}
 is a $C^k$-map. Furthermore, $S \colon C(J_0,E) \rightarrow X, S(\gamma) (t) \coloneq \int_{\tau_0}^t \gamma (s) ds$ is continuous linear, thus smooth. Since $h = S \circ \Gamma$ we deduce that $h$ is $C^s$.\\
 The map $\Psi_{\tau_0,y,p}$ is the fixed point of $h(y,p,\bullet)$. But $h \colon (U_0 \times P_0) \times B_R^X (0) \rightarrow X$ is $C^s$ and $B_R^X (0) \subseteq X$ is an open subset of a Banach space $X$. By Lemma \ref{lem: para:Cr}, the map $\Psi_{\tau_0} \colon B_R^E (x_0) \times P_0 \rightarrow C(J_0,E), (y,p) \mapsto \Psi_{\tau_0,y,p}$ is $C^s$. Consider the continuous linear map $\theta \colon E \rightarrow C(J_0,E)$, which assigns to each $x \in E$ the constant map with image $x$. Recall that $\varphi_{\tau_0,y,p} = \Psi_{\tau_0,y,p} + \theta (y)$ holds. By construction, $\varphi_{\tau_0,y,p}$ solves the initial value problem \eqref{eq: ODE}, i.e. $\varphi'_{\tau_0,y,p} (t) = f(t,\varphi_{\tau_0,y,p} (t),p)$, $\forall (t,y,p) \in J_0 \times U_0 \times P_0$. Thus a $C^s$-map with values in $C^1(J_0,E)$ is given by 
  \begin{displaymath} 
   \Theta_{\tau_0} \colon U_0 \times P_0 \rightarrow C(J_0,E), (y,p) \mapsto \varphi_{\tau_0,y,p} = \Psi_{\tau_0} (y,p) + \theta (y).
  \end{displaymath} 
Define $D \colon C^1 (J_0,E) \rightarrow C(J_0,E), \gamma \mapsto \gamma'$. We compute:\\ $(D \circ \Theta_{\tau_0}) (y,p) (t) = \varphi_{\tau_0,y,p}' (t) = f(t, \varphi_{\tau_0,y,p} (t),p) = g(t, \Psi_{\tau_0,y,p} (t), y,p)$. Hence  
  \begin{displaymath}
   (D \circ \Theta_{\tau_0}) (y,p) = \Gamma (y,p,\Psi_{\tau_0} (y,p)),
  \end{displaymath}
 whence $D \circ \Theta_{\tau_0}$ is a $C^s$-map. The mapping $\Lambda \colon C^1(J_0,E) \rightarrow C(J_0,E)\times C(J_0,E) , \gamma \mapsto (\gamma, D(\gamma))$ is a linear topological embedding with closed image by Lemma \ref{lem: cr:emb}. Combining the results from above, $\Theta_{\tau_0} \colon U_0 \times P_0 \rightarrow C^1 (J_0,E)$ is a $C^s$-map as $\Lambda \circ \Theta_{\tau_0}$ is $C^s$. We now prove by induction on $j \in \set{0, \ldots , r}$ that $\Theta_{\tau_0} \colon U_0 \times P_0 \rightarrow C^{j+1} (J_0,E)$ is a $C^s$-map. Having already dealt with the case $j = 0$, assume that $j > 0$ and $\Theta_{\tau_0} \colon U_0 \times P_0 \rightarrow C^j (J_0,E)$ is $C^s$. As $f$ is a mapping of class $C^{r,k}$ with $k \geq r \geq j$, it is a $C^j$-map by Lemma \ref{lem: C11:C1}. We deduce from \eqref{eq: ODE} that $\varphi_{\tau_0,y,p}'$ is a map of class $C^j$, whence $\varphi_{\tau_0,y,p}$ is of class $C^{j+1}$. Therefore $\im (\Theta_{\tau_0}) \subseteq C^{j+1} (J_0,E)$ and we are left to prove 
the $C^s$-property of $\Theta_{\tau_0}$ as a map to $C^{j+1} (J_0,E)$. To this end, define $D \colon C^{j+1} (J_0,E) \rightarrow C^j (J_0,E), \gamma \mapsto \gamma'$ and $\Gamma^j \colon C^j (J_0, B_R^E(0)) \times (U_0 \times P_0)  \rightarrow C^j (J_0,E), (\gamma , (y,p)) \mapsto (g(\bullet, y,p))_* (\gamma)$. Again as $g$ is a $C^{r,k}$-map, $\Gamma^j$ is $C^s$ by Lemma \ref{lem: parapf:Ck}. One easily checks that $D \circ \Theta_{\tau_0} = \Gamma^j \circ (\Psi_{\tau_0} , \id)$ holds and thus $D \circ \Theta_{\tau_0}$ is $C^s$. By Lemma \ref{lem: cr:emb}, $\Lambda^{j+1} \colon C^{j+1} (J_0,E) \rightarrow C(J_0,E) \times C^j (J_0,E), \gamma \mapsto (\gamma , D(\gamma))$ is a linear topological embedding with closed image. As $\Lambda^{j+1} \circ \Theta_{\tau_0}$ is $C^s$ by the above, $\Theta_{\tau_0}$ is $C^s$ as a map to $C^{j+1} (J_0,E)$. This concludes the induction step, proving $\Theta_{\tau_0} \colon U_0 \times P_0 \rightarrow C^{r+1} (J_0,E)$ to be a $C^s$-map.  
 Since $J_0$ is a compact, hence a locally compact space, Theorem \ref{kspace-iso} (a) proves $(\Theta_{\tau_0})^\wedge \colon (U_0 \times P_0) \times J_0 \rightarrow E$ to be a mapping of class $C^{s , r+1}$. Corollary \ref{rs-sr} implies that $\Phi_{\tau_0} \colon J_0 \times (B_R^E(0) \times P_0) \rightarrow E , \Phi_{\tau_0} (t,(y,p)) = \varphi_{\tau_0,y,p} (t) = (\Theta_{\tau_0})^\wedge ((y,p),t)$ is a $C^{r+1,s}$-map.
\end{proof}

\begin{rem}
 \begin{compactenum}
  \item Notice that if $k < \infty$ in the last theorem, we lose differentiability orders of the solution with respect to initial value and parameter. 
  \item There is an alternative proof for the differentiable dependence of the solution: Prove the differentiable dependence by induction. To do so exploit the ``mixed partial derivatives`` outlined in \cite[II. Theorem 9.2]{amann1990} combined with the Chain Rule \ref{lem: chainrule}. Notice however that one still needs $f$ to be a $C^{r,k}$-mapping with $k \geq r+s$, as also the Chain Rule  \ref{lem: chainrule} decreases the order of differentiability.
 \end{compactenum}
\end{rem}

The last theorem provided a local uniqueness and existence result. Global results for the flow of a differential equation follow in the wash. 

\begin{rem}\label{rem: Flow}
  Theorem \ref{thm: A:ODES} proves that in the situation of \ref{setup1} with $(t_0,x_0,p_0) \in J \times U \times P$ there is a unique solution in a neighbourhood of $t_0$. Fix $t_0 \in J$. Applying the usual arguments (cf. \cite[II. 7.6]{amann1990}), the uniqueness assertion in \ref{thm: A:ODES} allows one to construct a unique maximal solution to \eqref{eq: ODE} with $x(t_0) = x_0$ and parameter $p_0 \in P$. By abuse of notation, we denote the maximal solutions by $\varphi_{t_0,x_0,p_0}$. It coincides with the maps constructed in Theorem \ref{thm: A:ODES}. The solution $\varphi_{t_0,x_0,p_0}$ is defined on an interval $J^{t_0}_{x_0,p_0}$, which is a neighbourhood of $t_0$ in $J$. Again as in \cite[II. 7.6]{amann1990}, $J^{t_0}_{x_0,p_0}$ is an open subset of $J$.\\
  We introduce the set $\mathfrak{D}(f) \coloneq \bigcup_{t_0 \in J, x_0 \in U , p_0 \in P} \set{t_0} \times J^{t_0}_{x_0,p_0} \times \set{(x_0,p_0)}$ and define the \emph{flow} of the differential equation:  $\Fl^f \colon \mathfrak{D} (f) \rightarrow U, (t_0,t,(x_0,p_0)) \mapsto \varphi_{t_0,x_0,p_0} (t)$. it is known that $\mathfrak{D}(f)$ is open in $J \times J \times U \times P$ and $\Fl^f$ is continuous (cf. \cite{hg2006} for the local argument), but we shall not presume this. In the remainder of this section we shall study differentiability properties of $\Fl^f_{t_0} \coloneq \Fl^f (t_0,\bullet)$.  
\end{rem}

\begin{prop}\label{prop: flow:diff} 
 In the setting of Theorem \ref{thm: A:ODES} fix $t_0 \in J$. If $f$ is of class $C^{r,s}$, then $\Fl^f_{t_0}$ is a $C^{r+1,s}$-map on the open subset $\Omega_{t_0} \coloneq \bigcup_{x_0 \in U , p_0 \in P} J^{t_0}_{x_0,p_0} \times \set{(x_0,p_0)} \subseteq J \times (U \times P)$.
\end{prop}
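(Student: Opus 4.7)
The plan is to reduce the proposition to the local Picard--Lindel\"of type result Theorem \ref{thm: A:ODES} by a compactness and gluing argument built on the semigroup property of flows. Fix a point $(\tau_1,x_1,p_1) \in \Omega_{t_0}$. I will exhibit an open neighborhood $W$ of this point in $J \times U \times P$ contained in $\Omega_{t_0}$ on which $\Fl^f_{t_0}$ is represented by an explicit $C^{r+1,s}$-map. This establishes the openness of $\Omega_{t_0}$ and the regularity claim simultaneously.

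Without loss of generality assume $\tau_1 \geq t_0$; the opposite case is handled symmetrically. Write $\gamma(\sigma) := \varphi_{t_0,x_1,p_1}(\sigma)$ for $\sigma \in [t_0,\tau_1]$. For every $\sigma \in [t_0,\tau_1]$, Theorem \ref{thm: A:ODES} applied at the anchor $(\sigma, \gamma(\sigma), p_1)$ provides a convex interval $J_\sigma \subseteq J$ around $\sigma$, open neighborhoods $U_\sigma \subseteq U$ of $\gamma(\sigma)$ and $P_\sigma \subseteq P$ of $p_1$, and a $C^{r+1,s}$-map
\begin{equation*}
\Phi^\sigma \colon J_\sigma \times (U_\sigma \times P_\sigma) \to U,\qquad \Phi^\sigma(t,y,p) = \varphi_{\sigma,y,p}(t).
\end{equation*}
By compactness of $[t_0,\tau_1]$ and continuity of $\gamma$ I select finitely many times $t_0 = \sigma_0 < \sigma_1 < \cdots < \sigma_N = \tau_1$ such that $\sigma_{i+1} \in J_{\sigma_i}$ and $\gamma(\sigma_{i+1}) \in U_{\sigma_{i+1}}$ for every $i = 0,\ldots,N-1$.

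The uniqueness clause of Theorem \ref{thm: A:ODES} forces the semigroup identity $\varphi_{\sigma_i,y,p}(t) = \varphi_{\sigma_{i+1},\varphi_{\sigma_i,y,p}(\sigma_{i+1}),p}(t)$ wherever both sides are defined. Iterating, for $(y_0,p)$ in a sufficiently small neighborhood $W_{xp}$ of $(x_1,p_1)$ and $t \in J_{\sigma_N}$, the maximal flow admits the representation
\begin{equation*}
\Fl^f_{t_0}(t,y_0,p) \;=\; \Phi^{\sigma_N}\!\bigl(t,\, h_N(y_0,p),\, p\bigr),
\end{equation*}
where $h_0(y_0,p) := y_0$ and $h_{i+1}(y_0,p) := \Phi^{\sigma_i}(\sigma_{i+1}, h_i(y_0,p), p)$ for $i=0,\ldots,N-1$. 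Since $h_i(x_1,p_1) = \gamma(\sigma_i) \in U_{\sigma_i}$, continuity of each $\Phi^{\sigma_i}$ allows me to shrink $W_{xp}$ inductively to guarantee $(h_i(y_0,p), p) \in U_{\sigma_i} \times P_{\sigma_i}$ for all $i$, so every composition is well-defined and the iterated semigroup identity applies.

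Finally, I would show by induction on $i$ that each $h_i$ is a $C^s$-map on $W_{xp}$: freezing the time variable at $\sigma_{i+1}$ in the $C^{r+1,s}$-map $\Phi^{\sigma_i}$ yields a $C^s$-map in the remaining arguments (cf.\ Lemma \ref{fx-dij}), and composing with the $C^s$-map $(h_i,\pr_{P})$ (whose $C^s$-property uses Lemma \ref{lem: mp:prod}) preserves the class by the ordinary chain rule for $C^s$-maps. The outermost step then invokes Chain Rule 1 (Lemma \ref{chain-1}) with the $C^{r+1,s}$-map $\Phi^{\sigma_N}$, $g_1 = \id_{J_{\sigma_N}}$ (smooth), and $g_2 = (h_N,\pr_{P})$ ($C^s$) to conclude that the right-hand side is $C^{r+1,s}$ on $W := J_{\sigma_N} \times W_{xp}$. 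The main technical obstacle is the nested inductive shrinking of $W_{xp}$ so that every partial composition actually lands in the next local domain; everything else is formal bookkeeping using the already established local theorem and the Chain Rule.
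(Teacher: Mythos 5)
Your argument is correct: it rests on exactly the three ingredients the paper uses (the local Theorem~\ref{thm: A:ODES}, uniqueness of solutions giving the semigroup identity, and Chain Rule~1, Lemma~\ref{chain-1}), but the global organization is genuinely different. You run a compactness argument along the trajectory $\gamma([t_0,\tau_1])$ and build an explicit finite chain $\Phi^{\sigma_N}(t,h_N(y_0,p),p)$ of local flows, paying for this with the inductive shrinking of $W_{xp}$ so that every partial composition $h_i$ lands in the next local domain $U_{\sigma_i}\times P_{\sigma_i}$. The paper instead fixes $(x_0,p_0)$ and runs an open-and-closed connectedness argument on the set $J^\star_{x_0,p_0}$ of times near which $\Fl^f_{t_0}$ is already known to be $C^{r+1,s}$: at a boundary point $t^*$ it anchors a single new local flow $\Phi^*$, chooses one intermediate time $a\in J^\star_{x_0,p_0}$ with $\Fl^f_{t_0}(a,x,p)$ in the domain of $\Phi^*$, and writes $\Fl^f_{t_0}(t,x,p)=\Phi^*(a,t,\Fl^f_{t_0}(a,x,p),p)$; only one two-fold composition is ever needed, since the inner factor is regular by the induction built into the definition of $J^\star_{x_0,p_0}$. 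Your version is more constructive (it exhibits an explicit $C^{r+1,s}$ formula for the flow near any point of $\Omega_{t_0}$) at the cost of heavier bookkeeping; the paper's version trades that for the abstract connectedness step. Two small points you should make explicit if you write this up: the representation $h_i(y_0,p)=\varphi_{t_0,y_0,p}(\sigma_i)$ and the final identity require uniqueness of \emph{maximal} solutions (obtained from the local uniqueness of Theorem~\ref{thm: A:ODES} by the standard argument recorded in Remark~\ref{rem: Flow}), not merely the local uniqueness clause; and since the intervals $J_\sigma$ produced by Theorem~\ref{thm: A:ODES} are compact neighbourhoods, you should pass to the relative interior of $J_{\sigma_N}\times W_{xp}$ when concluding that $\Omega_{t_0}$ is open. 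Neither is a gap.
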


\begin{proof}
 Consider arbitrary $(t_0,x_0,p_0) \in J \times U \times P$. Let $J^\star_{x_o,p_0}$ be the set of points $a \in J^{t_0}_{x_0,p_0}$ for which the following conditions hold
    \begin{compactenum}
     \item There is a relatively open convex neighbourhood $I_a \subseteq J$ of $a$ together with an open neighbourhood $V_a \subseteq U \times P$ of $(x_0,p_0)$, such that $I_a \times V_a \subseteq \Omega_{t_0}$;
     \item The restriction $(\Fl^f_{t_0})|_{I_a \times V_a}$ is a $C^{r+1,s}$-mapping.
    \end{compactenum}
  Then $J^\star_{x_0,p_0}$ is an open subset of $J^{t_0}_{x_0,p_0}$ whose interior contains $t_0$, by Theorem \ref{thm: A:ODES}. 
  \paragraph{Claim:} $J^\star_{x_0,p_0} = J^{t_0}_{x_0,p_0}$.  If this is correct, then for each $(t',x,p) \in \Omega_{t_0}$ we derive $t' \in J^{t_0}_{x,p} = J^\star_{x,p}$. The definition of $J^\star_{x,p}$ implies that $\Omega_{t_0}$ is then a neighbourhood of $(t',x,p)$. Thus $\Omega_{t_0}$ is open (as a subset of $J \times U \times P$). Furthermore $\Fl^f_{t_0}$ is of class $C^{r+1,s}$, since this condition is satisfied locally by property (b) of $J^\star_{x_0,p_0}$.\\
  The set $J^{t_0}_{x_0,p_0}$ is convex, hence connected. To obtain $J^\star_{x_0,p_0} = J^{t_0}_{x_0,p_0}$ it suffices to show that $J^\star_{x_0,p_0}$ is also a closed subset of $J^{t_0}_{x_0,p_0}$.\\
  Denote by $t^* \in \partial J^\star_{x_0,p_0}$ a point in the boundary with respect to $J^{t_0}_{x_0,p_0}$. Without loss of generality we may assume $t^* > t_0$ (the argument for $t^* < t_0$ is analogous). By Theorem \ref{thm: A:ODES} there are a convex open subset $t^* \in J_{t^*} \subseteq J^{t_0}_{x_0,p_0}$ and open neighbourhoods $W_1 \subseteq U$ of $y_0 \coloneq \Fl_{t_0}^f (t^*,x_0,p_0)$, resp.\ $W_2 \subseteq P$ of $p_0$, such that:\\
  There is a map $\Phi^* \colon J_{t^*} \times J_{t^*} \times (W_1 \times W_2) \rightarrow U$ with $\Phi_a^* \coloneq \Phi^* (a,\bullet)$ being of class $C^{r+1,s}$ for each $a \in J_{t^*}$. The map $\Phi^*_a (\bullet,y,p) $ solves \eqref{eq: ODE} and is thus continuous with $\Phi^*_{a} (a,x,p) = x$, for all $(a,x,p) \in J_{t^*} \times W_1 \times W_2$.\\
  We already know $\Fl_{t_0}^f (\bullet, x_0,p_0) = \varphi_{t_0,x_0,p_0} (\bullet)$ to be continuous by Remark \ref{rem: Flow}. As its domain contains $t^*$, there is $t_0 < a < t^*$ with $a \in J_{t^*}$ and $\Fl^f_{t_0} (a,x_0,p_0) \in W_1$. Since $t_0 < a < t^*$ holds, $a$ is contained in $J^\star_{x_0,p_0}$. Thus there exist convex open subsets $a \in I_a \subseteq J^{t_0}_{x_0,p_0}$, an open neighbourhood $V_1 \subseteq U$ of $x_0$ and an open neighbourhood $V_2 \subseteq W_2$ of $p_0$ such that $\Fl^f_{t_0}$ is of class $C^{r+1,s}$ on $I_a \times V_1 \times V_2$. By continuity we may shrink the sets, such that $\Fl_{t_0}^f (I_a \times V_1 \times V_2) \subseteq W_1$ and $I_a \subseteq J_{t^*}$ holds. Now for all $(t,x,p) \in I_{a} \times V_1\times V_2$, by construction $(\Fl_{t_0}^f (t,x,p),p) \in W_1 \times W_2$.\\
  Let $(x,p) \in V_1 \times V_2$. The maps $\Fl_{t_0}^f (\bullet , x , p)$ and $\Phi^* (a, \bullet , \Fl_{t_0}^f (x,p),p)$ are both solutions to the initial value problem
    \begin{displaymath}
     \begin{cases}
      y'(t) &= f(t,y(t),p), \\
      y(a)  &= \Fl_{t_0}^f (a,x,p) .
     \end{cases}
    \end{displaymath}
 The uniqueness assertion in Theorem \ref{thm: A:ODES} then implies 
  \begin{equation}\label{eq: new}
   \Fl_{t_0}^f (t,x,p) = \Phi^* (a, t,\Fl_{t_0}^f (a,x,p),p) \text{ for } (t,x,p) \in I_a \times V_1 \times V_2. 
  \end{equation}
 By construction, the right-hand side of \eqref{eq: new} is defined on $J_{t^*}$. Furthermore by the Chain Rule \ref{chain-1}, it is a mapping of class $C^{r+1,s}$ on $J_{t^*} \times (V_1 \times V_2)$. Thus $\Fl^f_{t_0}$ is of class $C^{r+1,s}$ on this product. But the product $J_{t^*} \times V_1 \times V_2$ contains $(t^*, x_0, p_0)$ in its interior and $\Fl^f_{t_0}$ is $C^{r+1,s}$ on this set. Thus $t^* \in J^\star_{x_0,p_0}$ holds by definition of $J^\star_{x_0,p_0}$ and we derive that $J^\star_{x_0,p_0} = \overline{J^\star_{x_0,p_0}}$. 
 \end{proof}

\begin{rem}
 The statement of Proposition \ref{prop: flow:diff} yields the following additional information: $\Omega_{t_0}$ is a locally convex subset with dense interior of $\R \times U \times P$ if $J$ is a non-open interval. If $J$ is open, then $\Omega_{t_0}$ is an open subset of $\R \times U \times P$. 
\end{rem}

To state our last result, we need $C^{r,s}$-mappings between manifolds.

\begin{defn}\label{defn: crs:mfd}
 Let $M_i, i \in \set{1,2,3}$, be smooth manifolds (possibly with rough boundary) modelled on locally convex spaces $E_i$. A map $f \colon M_1 \times M_2 \rightarrow M_3$ is called a $C^{r,s}$-mapping, if it is continuous and for all charts $\varphi$ of $M_1$, $\psi$ of $M_2$ and $\kappa \colon V_\kappa \rightarrow U_\kappa$ of $M_3$ the mapping 
      \begin{displaymath}
       \kappa \circ f \circ (\varphi^{-1} \times \psi^{-1})|_{\varphi \times \psi (f^{-1} (V_\kappa))} \colon \varphi \times \psi (f^{-1} (V_\kappa) \cap U_\phi \cap U_\psi) \rightarrow E_3
      \end{displaymath}
 is a $C^{r,s}$-mapping.\\
 As all manifolds involved are smooth, the change of chart maps are smooth. It readily follows from the chain rules (\ref{chain-1} and \ref{chain-2}) that it suffices to check the $C^{r,s}$-property for arbitrary atlases of $M_1$, $M_2$ and $M_3$.
\end{defn}

\begin{setup}\label{setup: mfd}
 Let $J$ be a non-degenerate interval, $M$ a smooth manifold (without boundary) modelled on a Banach space $E$, $r \in \N_0 \cup \set{\infty}$, $k,s \in \N \cup \set{\infty}$ with $k \geq r+s$ and $P \subseteq Z$ an open subset of the locally convex space $Z$.\\ Assume that $X \colon J \times (M\times P) \rightarrow TM, (t,(x,p)) \mapsto X_{t,p} (x)$ is a map of class $C^{r,k}$ such that $X_{t,p}(x) \in T_xM$ holds for each $(t,x,p) \in J \times M \times P$. Let $t_0 \in J$, $(y_0 , p_0) \in M \times P$. Using local representatives of $X$ together with the local existence and uniqueness result (Theorem \ref{thm: A:ODES}), one may apply standard arguments (cf. \cite[IV. \S 2]{langdgeo2001}) to obtain a maximal solution $\varphi_{t_0,y_0,p_0} \colon I_{t_0,y_0,p_0} \rightarrow M$ to the differential equation 
      \begin{equation}\label{eq: mfd:ODE}
       \begin{cases}
        \varphi' (t) &= X (t,\varphi (t),p_0) = X_{t,p_0} (\varphi (t))\\ 
	\varphi (t_0)&= y_0
       \end{cases}    
      \end{equation}
 defined on an open subset $I_{t_0,y_0,p_0} \subseteq J$.
\end{setup}

 \begin{prop}\label{prop: flow:mfd}
  In the setting of \ref{setup: mfd}, define the subset 
  \begin{displaymath}
   \Omega \coloneq \displaystyle \bigcup_{(t_0,y_0,p_0) \in J\times M\times P} \set{t_0} \times I_{t_0,y_0,p_0} \times \set{y_0,p_0}
  \end{displaymath}
  of $J \times J \times M \times P$ and the map 
  \begin{displaymath}
   \Phi \colon \Omega \rightarrow M, \Phi (t_0,t,y_0,p_0) \coloneq \varphi_{t_0,y_0,p_0} (t).
  \end{displaymath}
 Then for $t_0 \in J$, the set $\Omega_{t_0} \coloneq \bigcup_{(y_0,p_0) \in M\times P} I_{t_0,y_0,p_0} \times \set{y_0,p_0}$ of $J \times M \times P$ is an open subset of $J \times M \times P$ and $\Phi_{t_0} \coloneq \Phi (t_0,\bullet)$ is a mapping of class $C^{r+1,s}$ with respect to $\Omega_{t_0} \subseteq J \times (M\times P)$.
 \end{prop}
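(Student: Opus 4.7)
The strategy mirrors the proof of Proposition \ref{prop: flow:diff}: reduce to the Banach space setting via charts on $M$, and then propagate along the maximal interval using a connectedness argument. Fix $t_0 \in J$, and note that for any chart $\kappa$ of $M$ around a point $\Phi_{t_0}(a,y,p)$, reading $X$ in this chart via $T\kappa$ gives a local vector field $\tilde X(t,\xi,q) \coloneq d\kappa(\kappa^{-1}(\xi))\, X(t,\kappa^{-1}(\xi),q)$ on an open subset of $J \times (E \times Z)$. Since $\kappa$ is smooth and $X$ is $C^{r,k}$ in the sense of Definition \ref{defn: crs:mfd}, the Chain Rules \ref{chain-1} and \ref{chain-2} show that $\tilde X$ is $C^{r,k}$, so Proposition \ref{prop: flow:diff} applies and yields a $C^{r+1,s}$ local flow for $\tilde X$ in the chart. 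By uniqueness from Theorem \ref{thm: A:ODES}, this chart-flow agrees with $\kappa \circ \Phi_{t_0}$ wherever both are defined; this gives the \emph{local version}: around every $(a,y,p) \in \Omega_{t_0}$ there are a convex $I_a \subseteq J$ and open neighbourhoods $V_1 \subseteq M$ of $y$, $V_2 \subseteq P$ of $p$ with $I_a \times V_1 \times V_2 \subseteq \Omega_{t_0}$ and $\kappa \circ \Phi_{t_0}|_{I_a \times V_1 \times V_2}$ of class $C^{r+1,s}$.

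To globalise this for fixed $(y_0,p_0)$, I would let $I^\star \subseteq I_{t_0,y_0,p_0}$ be the set of those $a$ for which the local version holds at $(a,y_0,p_0)$. The local version makes $I^\star$ open and places $t_0$ in $I^\star$. To prove $I^\star$ is closed in the connected interval $I_{t_0,y_0,p_0}$, let $t^\star$ be a boundary point (say $t^\star > t_0$; the other case is symmetric). Apply the local version with starting time $t^\star$ to obtain a $C^{r+1,s}$ chart-flow $\widetilde \Phi^\star$ in some chart $\kappa$ around $\Phi_{t_0}(t^\star,y_0,p_0)$. Using continuity of $t \mapsto \Phi_{t_0}(t,y_0,p_0)$, pick $a \in I^\star$ strictly between $t_0$ and $t^\star$ and sufficiently close to $t^\star$ so that $\Phi_{t_0}(a,y_0,p_0)$ lies in the chart domain of $\kappa$ and $a$ in the time-window of $\widetilde \Phi^\star$. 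After shrinking spatial and parameter neighbourhoods so the relevant images remain in the chart domain, uniqueness in Theorem \ref{thm: A:ODES} yields on a neighbourhood of $(t^\star,y_0,p_0)$ the identity
$$\kappa(\Phi_{t_0}(t,y,p)) = \widetilde \Phi^\star\bigl(a,\, t,\, \kappa(\Phi_{t_0}(a,y,p)),\, p\bigr).$$
The map $(y,p) \mapsto \kappa(\Phi_{t_0}(a,y,p))$ is $C^s$ by the hypothesis $a \in I^\star$ (restricting a $C^{r+1,s}$-map to fixed time, with a smooth chart transition applied if the chart used at $a$ differs from $\kappa$). Hence the right-hand side is $C^{r+1,s}$ by Chain Rule \ref{chain-1}, which places $t^\star$ in $I^\star$. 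Connectedness then forces $I^\star = I_{t_0,y_0,p_0}$ for every $(y_0,p_0)$.

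This simultaneously shows that $\Omega_{t_0}$ is open in $J \times M \times P$ and that $\Phi_{t_0}$ is of class $C^{r+1,s}$ on $\Omega_{t_0}$ in the sense of Definition \ref{defn: crs:mfd}. The main obstacle is the local reduction to Proposition \ref{prop: flow:diff}: one must unpack the section $X$ of $TM$ into an honest $E$-valued function via a chart and confirm that the resulting $\tilde X$ still has class $C^{r,k}$. This is routine chain-rule bookkeeping once chart transitions are recognised as smooth, but it is the step where the differential-geometric data is reduced to the previously treated Banach space case.
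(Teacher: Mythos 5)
Your proposal is correct and follows essentially the same route as the paper: pass to a local representative $X_\kappa = \pr_E\circ T\kappa\circ X\circ(\id_J\times\kappa^{-1}\times\id_P)$ of class $C^{r,k}$, invoke Theorem~\ref{thm: A:ODES} for local existence, uniqueness and local $C^{r+1,s}$-regularity, and then rerun the open--closed connectedness argument of Proposition~\ref{prop: flow:diff} (which the paper simply cites rather than rewrites). The only quibble is that your first paragraph announces the ``local version'' around \emph{every} $(a,y,p)\in\Omega_{t_0}$ before it is actually available there --- the chart reduction only yields it at starting times --- but your second paragraph correctly treats that statement as the defining property of $I^\star$ to be propagated by the connectedness argument, so the proof is sound.
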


\begin{proof}
 We begin with a local computation in charts: Let $\kappa \colon M \supseteq V_\kappa \rightarrow U_\kappa \subseteq E$ be some chart for $M$. Then the local representative 
  \begin{displaymath}
   X_\kappa \colon J \times (U_\kappa \times P) \rightarrow E, X_\kappa \coloneq \text{pr}_E \circ T \kappa \circ X \circ (\id_J \times \kappa^{-1} \times \id_P)
  \end{displaymath}
 (where $\text{pr}_E$ is the canonical projection onto $E$) is a $C^{r,k}$-map by construction. Using the local representative and Theorem \ref{thm: A:ODES}, one easily obtains the following:\\ 
 For each $(t,x,p) \in J \times V_\kappa \times P$ there is a relatively open neighbourhood $t \in I^\kappa_{t,x,p} \subseteq J$ and open subsets $x \in V_1 \subseteq V_\kappa$, $p \in V_2 \subseteq P$ such that  $I^\kappa_{t,x,p} \times I^\kappa_{t,x,p} \times (V_1 \times V_2) \subseteq \Omega$. For each $s \in I^\kappa_{t,x,p}$, the mapping $\Phi_s \rvert_{I^\kappa_{t,x,p} \times (V_1 \times V_2)}$ is a mapping of class $C^{r+1,s}$.\\
 Observe that this local flow is uniquely determined as Theorem \ref{thm: A:ODES} ensures uniqueness of the solution in a small neighbourhood. Reviewing the proof of Proposition \ref{prop: flow:diff}, only local existence of the solution and local uniqueness were used. Thus we may repeat the argument given in the proof of Proposition \ref{prop: flow:diff} to obtain the desired result.  
\end{proof}
\noindent
{\bf Acknowledgement and remark:} The authors would like to express their deep gratitude to Helge Gl\"ockner for his invaluable assistance and the helpful discussions concerning this work. Moreover, we thank the referee for many helpful comments which helped improve the manuscript. Sections 3 and 4 were written by the first author, and will be part of his Ph.D.-thesis~\cite{al}. Section 5 (and Lemma \ref{lem: chainrule}) were written by the second author, and will be used in his Ph.D-thesis~\cite{as2013}.

\bibliographystyle{babplain-lf}

\phantomsection
\addcontentsline{toc}{section}{References}

\bibliography{Alzaareer_Schmeding_CRS}

\newcommand{\noopsort}[1]{} \newcommand{\singleletter}[1]{#1}
\begin{thebibliography}{10}
  \providebibliographyfont{name}{}%
  \providebibliographyfont{lastname}{}%
  \providebibliographyfont{title}{\emph}%
  \providebibliographyfont{jtitle}{\btxtitlefont}%
  \providebibliographyfont{etal}{\emph}%
  \providebibliographyfont{journal}{}%
  \providebibliographyfont{volume}{}%
  \providebibliographyfont{ISBN}{\MakeUppercase}%
  \providebibliographyfont{ISSN}{\MakeUppercase}%
  \providebibliographyfont{url}{\url}%
  \providebibliographyfont{numeral}{}%
  \expandafter\btxselectlanguage\expandafter {\btxfallbacklanguage}

\expandafter\btxselectlanguage\expandafter {\btxfallbacklanguage}
\bibitem {al}
\btxnamefont {\btxlastnamefont {Alzaareer}, H.}\btxauthorcolon\ \btxtitlefont
  {Lie group structures on groups of maps on non-compact spaces and manifolds}.
\newblock \btxifchangecase {{Ph.D.}-thesis}{{Ph.D.}-thesis}, Universit\"at
  Paderborn, 2013.
\newblock ({A}dvisor: {H}. {G}l\"ockner),
  \url{nbn-resolving.de/urn:nbn:de:hbz:466:2-11572}.

\bibitem {amann1990}
\btxnamefont {\btxlastnamefont {Amann}, H.}\btxauthorcolon\ \btxtitlefont
  {{Ordinary Differential Equations}}, \btxvolumelong {}~\btxvolumefont {13}
  \btxofserieslong {}\ \btxtitlefont {Studies in Mathematics}.
\newblock \btxpublisherfont {de Gruyter}, Berlin, 1990.

\bibitem {Aus1999}
\btxnamefont {\btxlastnamefont {Au{\ss}enhofer}, L.}\btxauthorcolon\
  \btxjtitlefont {\btxifchangecase {Contributions to the duality theory of
  abelian topological groups and to the theory of nuclear groups}{Contributions
  to the duality theory of abelian topological groups and to the theory of
  nuclear groups}}.
\newblock \btxjournalfont {Diss. Math.}, (384):113pp., 1999.

\bibitem {bgn}
\btxnamefont {\btxlastnamefont {Bertram}, W.}, \btxnamefont {\btxlastnamefont
  {Gl\"ockner}, H.}\btxandcomma {} \btxandlong {}\ \btxnamefont
  {\btxlastnamefont {Neeb}, {K.--H.}}\btxauthorcolon\ \btxjtitlefont
  {\btxifchangecase {Differential calculus over general base fields and
  rings}{Differential calculus over general base fields and rings}}.
\newblock \btxjournalfont {Expo. Math}, 22(3):213--282, 2004.

\bibitem {Bil2002}
\btxnamefont {\btxlastnamefont {Biller}, H.}\btxauthorcolon\ \btxtitlefont
  {\btxifchangecase {The exponential law for smooth functions}{The exponential
  law for smooth functions}}, \noopsort{a} July 2002.
\newblock manuscript, TU Darmstadt.

\bibitem {Bil2007}
\btxnamefont {\btxlastnamefont {Biller}, H.}\btxauthorcolon\ \btxjtitlefont
  {\btxifchangecase {Analyticity and naturality of the multi-variable
  functional calculus}{Analyticity and naturality of the multi-variable
  functional calculus}}.
\newblock \btxjournalfont {Expo. Math.}, (25):131--163, \noopsort{b} 2007.

\bibitem {BoSi1971}
\btxnamefont {\btxlastnamefont {Bochnak}, J.} \btxandlong {}\ \btxnamefont
  {\btxlastnamefont {Siciak}, J.}\btxauthorcolon\ \btxjtitlefont
  {\btxifchangecase {Polynomials and multilinear mappings in topological vector
  spaces}{Polynomials and multilinear mappings in topological vector spaces}}.
\newblock \btxjournalfont {Studia Math.}, (39):59--76, 1971.

\bibitem {Dah2011}
\btxnamefont {\btxlastnamefont {Dahmen}, R.}\btxauthorcolon\ \btxtitlefont
  {{Direct limit constructions in infinite dimensional Lie theory}}.
\newblock \btxphdthesis {}, University of Paderborn, 2011.
\newblock \url{http://nbn-resolving.de/urn:nbn:de:hbz:466:2-239}.

\bibitem {Engelking1989}
\btxnamefont {\btxlastnamefont {Engelking}, R.}\btxauthorcolon\ \btxtitlefont
  {General Topology}, \btxvolumelong {}~\btxvolumefont {6} \btxofserieslong {}\
  \btxtitlefont {Sigma Series in Pure Mathematics}.
\newblock \btxpublisherfont {Heldermann}, Berlin, $^{2}$1989.

\bibitem {FST1977}
\btxnamefont {\btxlastnamefont {Franklin}, S.P.} \btxandlong {}\ \btxnamefont
  {\btxlastnamefont {Thomas}, B.V.S.}\btxauthorcolon\ \btxjtitlefont
  {\btxifchangecase {A survey of $k_\omega$-spaces}{A survey of
  $k_\omega$-spaces}}.
\newblock \btxjournalfont {Topol. Proc.}, (2):111--124, 1977.

\bibitem {FaK1988}
\btxnamefont {\btxlastnamefont {Fr\"{o}licher}, A.} \btxandlong {}\
  \btxnamefont {\btxlastnamefont {Kriegl}, A.}\btxauthorcolon\ \btxtitlefont
  {{Linear Spaces and Differentiation Theory}}.
\newblock \btxpublisherfont {John Wiley}, Chichester, 1988.

\bibitem {hg2002}
\btxnamefont {\btxlastnamefont {Gl\"ockner}, H.}\btxauthorcolon\ \btxjtitlefont
  {\btxifchangecase {{Lie group structures on quotient groups and universal
  complexifications for infinite-dimensional Lie groups}}{{Lie group structures
  on quotient groups and universal complexifications for infinite-dimensional
  Lie groups}}}.
\newblock \btxjournalfont {J. Funct. Anal.}, 194:347--409, \noopsort{a}2002.

\bibitem {hg2002a}
\btxnamefont {\btxlastnamefont {Gl\"ockner}, H.}\btxauthorcolon\ \btxtitlefont
  {\btxifchangecase {{Infinite-dimensional Lie groups without completeness
  restrictions}}{{Infinite-dimensional Lie groups without completeness
  restrictions}}}.
\newblock \Btxinlong {}\ \btxnamefont {\btxlastnamefont {Strasburger}, A.},
  \btxnamefont {\btxlastnamefont {Hilgert}, J.}, \btxnamefont {\btxlastnamefont
  {Neeb}, {K.--H.}}\btxandcomma {} \btxandlong {}\ \btxnamefont
  {\btxlastnamefont {Wojty\'{n}ski}, W.}\ (\btxeditorslong {}): \btxtitlefont
  {{Geometry and Analysis on Lie Groups}}, \btxvolumelong {}~\btxvolumefont
  {55} \btxofserieslong {}\ \btxtitlefont {Banach Center Publication},
  \btxpublisherfont {\btxpageslong {}\ 43--59}. Warsaw, \noopsort{b}2002.

\bibitem {Glo2002}
\btxnamefont {\btxlastnamefont {Gl\"ockner}, H.}\btxauthorcolon\ \btxtitlefont
  {\btxifchangecase {Patched locally convex spaces, almost local mappings and
  the diffeomorphism groups of non-compact manifolds}{Patched locally convex
  spaces, almost local mappings and the diffeomorphism groups of non-compact
  manifolds}}, \noopsort{c}2002.
\newblock manuscript.

\bibitem {hg2004}
\btxnamefont {\btxlastnamefont {Gl\"ockner}, H.}\btxauthorcolon\ \btxtitlefont
  {\btxifchangecase {Lie groups over non-discrete topological fields}{Lie
  groups over non-discrete topological fields}}.
\newblock arXiv:math/0408008, \noopsort{d}August 2004.
\newblock Online: \url{http://arxiv.org/abs/math/0408008}.

\bibitem {hg2006}
\btxnamefont {\btxlastnamefont {Gl\"ockner}, H.}\btxauthorcolon\ \btxtitlefont
  {\btxifchangecase {{Implicit functions from topological vector spaces to
  Fr\'{e}chet spaces in the presence of metric estimates}}{{Implicit functions
  from topological vector spaces to Fr\'{e}chet spaces in the presence of
  metric estimates}}}.
\newblock arXiv:math/0612673v5 [math.FA], \noopsort{e}April 2007.
\newblock Online: \url{http://arxiv.org/abs/math/0612673}.

\bibitem {GDL2011}
\btxnamefont {\btxlastnamefont {Gl\"ockner}, H.}\btxauthorcolon\ \btxtitlefont
  {\btxifchangecase {{Direct limits of infinite-dimensional Lie
  groups}}{{Direct limits of infinite-dimensional Lie groups}}}.
\newblock \Btxinlong {}\ \btxnamefont {\btxlastnamefont {Neeb}, {K.--H.}}
  \btxandlong {}\ \btxnamefont {\btxlastnamefont {Pianzola}, A.}\
  (\btxeditorslong {}): \btxtitlefont {{Developments and Trends in
  Infinite-Dimensional Lie Theory}}, \btxvolumelong {}\ \btxvolumefont {288}
  \btxofserieslong {}\ \btxtitlefont {Progr. Math.}, \btxpageslong {}\
  243--280. \btxpublisherfont {Birkh\"auser}, Boston, \noopsort{g}2011.

\bibitem {Glo2012}
\btxnamefont {\btxlastnamefont {Gl\"ockner}, H.}\btxauthorcolon\ \btxtitlefont
  {\btxifchangecase {{Notes on regularity properties for infinite-dimensional
  Lie groups}}{{Notes on regularity properties for infinite-dimensional Lie
  groups}}}, \noopsort{h}2012.
\newblock arXiv:1208.0715v1.

\bibitem {gl12}
\btxnamefont {\btxlastnamefont {Gl\"ockner}, H.}\btxauthorcolon\ \btxjtitlefont
  {\btxifchangecase {{Continuity of \textit{LF}-algebra representations
  associated to representations of Lie groups}}{{Continuity of
  \textit{LF}-algebra representations associated to representations of Lie
  groups}}}.
\newblock \btxjournalfont {Kyoto J. Math.}, 53(3):567--595, \noopsort{i}2013.

\bibitem {Glox2}
\btxnamefont {\btxlastnamefont {Gl\"ockner}, H.}\btxauthorcolon\ \btxjtitlefont
  {\btxifchangecase {Exponential laws for ultrametric partially differentiable
  functions and applications}{Exponential laws for ultrametric partially
  differentiable functions and applications}}.
\newblock \btxjournalfont {{p-Adic Numbers Ultrametric Anal. Appl.}},
  5(2):122--1591, \noopsort{j}2013.

\bibitem {Glox}
\btxnamefont {\btxlastnamefont {Gl\"ockner}~\noopsort{x}H.}\btxauthorcolon\
  \btxtitlefont {\btxifchangecase {{Regularity in Milnor's sense for direct
  limits of infinite-dimensional Lie groups}}{{Regularity in Milnor's sense for
  direct limits of infinite-dimensional Lie groups}}}.
\newblock in preparation.

\bibitem {ghh}
\btxnamefont {\btxlastnamefont {Gl\"ockner}~\noopsort{y}H.}, \btxnamefont
  {\btxlastnamefont {Gramlich}, R.}\btxandcomma {} \btxandlong {}\ \btxnamefont
  {\btxlastnamefont {Hartnick}, T.}\btxauthorcolon\ \btxjtitlefont
  {\btxifchangecase {{Final group topologies, Kac-Moody groups and Pontryagin
  duality}}{{Final group topologies, Kac-Moody groups and Pontryagin
  duality}}}.
\newblock \btxjournalfont {Isr. J. Math.}, 177:49--101, 2010.

\bibitem {gn07}
\btxnamefont {\btxlastnamefont {Gl\"ockner}~\noopsort{z}H.} \btxandlong {}\
  \btxnamefont {\btxlastnamefont {Neeb}, {K.--H.}}\btxauthorcolon\
  \btxtitlefont {\btxifchangecase {{Infinite-dimensional Lie Groups, Vol
  I}}{{Infinite-dimensional Lie Groups, Vol I}}}.
\newblock book in preparation.

\bibitem {Ham1982}
\btxnamefont {\btxlastnamefont {Hamilton}, R.S.}\btxauthorcolon\ \btxjtitlefont
  {\btxifchangecase {{The inverse function theorem of Nash and Moser}}{{The
  inverse function theorem of Nash and Moser}}}.
\newblock \btxjournalfont {Bull. Amer. Math. Soc.}, (7):65--222, 1982.

\bibitem {keller1974}
\btxnamefont {\btxlastnamefont {Keller}, H.H.}\btxauthorcolon\ \btxtitlefont
  {{Differential Calculus in Locally Convex Spaces}}.
\newblock Lecture Notes in Mathematics 417. \btxpublisherfont {Springer
  Verlag}, Berlin, 1974.

\bibitem {kel}
\btxnamefont {\btxlastnamefont {Kelley}, L.}\btxauthorcolon\ \btxtitlefont
  {{General Topology}}.
\newblock \btxpublisherfont {Springer}, New York, 1975.

\bibitem {KM1997}
\btxnamefont {\btxlastnamefont {Kriegl}, A.} \btxandlong {}\ \btxnamefont
  {\btxlastnamefont {Michor}, P.W.}\btxauthorcolon\ \btxjtitlefont
  {\btxifchangecase {{Regular infinite-dimensional Lie groups}}{{Regular
  infinite-dimensional Lie groups}}}.
\newblock \btxjournalfont {J. Lie Theory}, (7):61--99, 1997.

\bibitem {conv1997}
\btxnamefont {\btxlastnamefont {Kriegl}, A.} \btxandlong {}\ \btxnamefont
  {\btxlastnamefont {Michor}, P.W.}\btxauthorcolon\ \btxtitlefont {{The
  Convenient Setting of Global Analysis}}.
\newblock Mathematical Surveys and Monographs 53. \btxpublisherfont {Amer.
  Math. Soc.}, Providence R.I., 1997.

\bibitem {langdgeo2001}
\btxnamefont {\btxlastnamefont {Lang}, S.}\btxauthorcolon\ \btxtitlefont
  {{Fundamentals of Differential Geometry}}.
\newblock Graduate Texts in Mathematics 191. \btxpublisherfont {Springer}, New
  York, $^2$2001.

\bibitem {Mic1980}
\btxnamefont {\btxlastnamefont {Michor}, P.W.}\btxauthorcolon\ \btxtitlefont
  {{Manifolds of Differentiable Mappings}}.
\newblock \btxpublisherfont {Shiva Publishing}, Orpington, 1980.

\bibitem {Mil1984}
\btxnamefont {\btxlastnamefont {Milnor}, J.}\btxauthorcolon\ \btxtitlefont
  {\btxifchangecase {{Remarks on infinite-dimensional Lie groups}}{{Remarks on
  infinite-dimensional Lie groups}}}.
\newblock \Btxinlong {}\ \btxnamefont {\btxlastnamefont {DeWitt}, B.S.}
  \btxandlong {}\ \btxnamefont {\btxlastnamefont {Stora}, R.}\ (\btxeditorslong
  {}): \btxtitlefont {Relativit\'{e}, groupes et topologie II}, \btxpageslong
  {}\ 1007--1057. \btxpublisherfont {North-Holland}, Amsterdam, 1984.

\bibitem {Nag}
\btxnamefont {\btxlastnamefont {Nagel}, E.}\btxauthorcolon\ \btxtitlefont
  {\btxifchangecase {{Fractional non-Archimedean calculus in many
  variables}}{{Fractional non-Archimedean calculus in many variables}}}.
\newblock {to appear in $p$-Adic Numbers, Ultrametric Analysis and
  Applications}.

\bibitem {Neeb2006}
\btxnamefont {\btxlastnamefont {Neeb}, {K.--H.}}\btxauthorcolon\ \btxjtitlefont
  {\btxifchangecase {{Towards a Lie theory of locally convex groups}}{{Towards
  a Lie theory of locally convex groups}}}.
\newblock \btxjournalfont {Jpn. J. Math.}, (1):291--468, 2006.

\bibitem {nw}
\btxnamefont {\btxlastnamefont {Neeb}, {K.--H.}} \btxandlong {}\ \btxnamefont
  {\btxlastnamefont {Wagemann}, F.}\btxauthorcolon\ \btxjtitlefont
  {\btxifchangecase {Lie group structures on groups of smooth and holomorphic
  maps on non-compact manifolds}{Lie group structures on groups of smooth and
  holomorphic maps on non-compact manifolds}}.
\newblock \btxjournalfont {Geom. Dedicata}, 134:17--60, 2008.

\bibitem {OMK1982}
\btxnamefont {\btxlastnamefont {Omori}, H.}, \btxnamefont {\btxlastnamefont
  {Maeda}, Y.}, \btxnamefont {\btxlastnamefont {Yoshioka}, A.}\btxandcomma {}
  \btxandlong {}\ \btxnamefont {\btxlastnamefont {Kobayashi},
  O.}\btxauthorcolon\ \btxjtitlefont {\btxifchangecase {{On regular
  Fr\'echet-Lie groups}}{{On regular Fr\'echet-Lie groups}}}.
\newblock \btxjournalfont {Tokyo J. Math.}, IV(5):365--398, 1982.

\bibitem {NaS}
\btxnamefont {\btxlastnamefont {Salmasian}, H.} \btxandlong {}\ \btxnamefont
  {\btxlastnamefont {Neeb}, {K.--H.}}\btxauthorcolon\ \btxjtitlefont
  {\btxifchangecase {{Differentiable vectors and unitary representations of
  Fr\'{e}chet-Lie supergroups}}{{Differentiable vectors and unitary
  representations of Fr\'{e}chet-Lie supergroups}}}.
\newblock \btxjournalfont {Math. Zeitschrift}, (275):419--451, 2013.

\bibitem {as2013}
\btxnamefont {\btxlastnamefont {Schmeding}, A.}\btxauthorcolon\ \btxtitlefont
  {The diffeomorphism group of a non-compact orbifold}.
\newblock \btxifchangecase {{Ph.D.}-thesis}{{Ph.D.}-thesis}, Universit\"at
  Paderborn, 2013.
\newblock ({A}dvisor: {H}. {G}l\"ockner),
  \url{nbn-resolving.de/urn:nbn:de:hbz:466:2-12166}.

\bibitem {Sei1972}
\btxnamefont {\btxlastnamefont {Seip}, U.}\btxauthorcolon\ \btxtitlefont
  {{Kompakt erzeugte Vektorr\"{a}ume und Analysis}}.
\newblock \Btxnumberlong {}\ 273 \btxinserieslong {}\ \btxtitlefont {Lecture
  Notes in Math.} \btxpublisherfont {Springer}, Berlin, 1972.

\bibitem {Ste1967}
\btxnamefont {\btxlastnamefont {Steenrod}, N.E}\btxauthorcolon\ \btxjtitlefont
  {\btxifchangecase {A convenient category of topological spaces}{A convenient
  category of topological spaces}}.
\newblock \btxjournalfont {Michigan Math. J.}, (14):133--152, 1967.

\bibitem {Tho1996}
\btxnamefont {\btxlastnamefont {Thomas}, E.G.F.}\btxauthorcolon\ \btxtitlefont
  {\btxifchangecase {Calculus in locally convex spaces}{Calculus in locally
  convex spaces}}, 1996.
\newblock Preprint W-9604, University of Groningen, Department of Mathematics.

\bibitem {wockel2006}
\btxnamefont {\btxlastnamefont {Wockel}, C.}\btxauthorcolon\ \btxjtitlefont
  {\btxifchangecase {Smooth extensions and spaces of smooth and holomorphic
  mappings}{Smooth Extensions and Spaces of Smooth and Holomorphic Mappings}}.
\newblock \btxjournalfont {J. Geom. Symmetry Phys.}, (5):118--126, 2006.

\end{thebibliography}
\end{document}